\numberwithin{equation}{section}
\numberwithin{figure}{section}
\newcommand{\dist}{\mathrm{dist}}
\newcommand{\avdeg}{\mathrm{d\overline{eg}}}
\newcommand{\dens}{d}
\newcommand{\NN}{\mathbb N}
\newcommand{\just}[1]{\mbox{\fbox{\tiny#1}}\quad}
\newcommand{\fG}{\mathcal{G}}
\newcommand{\fF}{\mathcal{F}}
\newcommand{\fD}{\mathcal{D}}
\newcommand{\fH}{\mathcal{H}}
\newcommand{\fB}{\mathcal{B}}
\newcommand{\fS}{\mathcal{S}}
\newcommand{\fT}{\mathcal{T}}
\newcommand{\fL}{\mathcal{L}}
\newcommand{\tr}{{\tilde r}}
\newcommand{\biga}{\tr a_1 > (1-\tr) a_2}
\newcommand{\bigb}{\tr a_1 \le (1-\tr) a_2}
\newcommand{\aone}{|\fD_{A1}|}
\newcommand{\atwo}{|\fD_{A2}|}
\newcommand{\bone}{|\fD_{B2}|}
\newcommand{\psm}{{P\ref{prop:embed-SM}}}
\newcommand{\pls}{{P\ref{prop:embed-LS}}}
\newtheorem{thm}{Theorem}
\newtheorem{theorem}{Theorem}
\newtheorem{lemma}[thm]{Lemma}
\newtheorem{claim}[thm]{Claim}
\newtheorem{proposition}[thm]{Proposition}
\newtheorem{corollary}[thm]{Corollary}
\newtheorem{conj}[thm]{Conjecture}
\theoremstyle{definition}
\newtheorem{definition}[thm]{Definition}
\theoremstyle{remark}
\renewcommand{\phi}{\varphi}
\renewcommand{\marginpar}[1]{} 
\title{A version of the Loebl-Koml\'os-S\'os conjecture for skewed trees}
\author{Tereza Klimo\v sov\'a}\address{Department of Applied Mathematics, Faculty of Mathematics and Physics, Charles University, 
	Malostransk\'e n\'am\v{e}st\'i 25, 118 00 Praha 1, Czech Republic.}
\email{tereza@kam.mff.cuni.cz}
\author{Diana Piguet}\address{The Czech Academy of Sciences, Institute of Computer Science, Pod Vod\'{a}renskou v\v{e}\v{z}\'{\i} 2, 182 07 Prague, Czech Republic. With institutional support RVO:67985807.}
\email{piguet@cs.cas.cz}
\author{V\'aclav Rozho\v n}
\address{Faculty of Mathematics and Physics, Charles University \& The Czech Academy of Sciences, Institute of Computer Science, Pod Vod\'{a}renskou v\v{e}\v{z}\'{\i} 2, 182 07 Prague, Czech Republic. With institutional support RVO:67985807.}\email{vaclavrozhon@gmail.com}
\thanks{Klimo\v sov\'a was supported by Center
of Excellence -- ITI, project P202/12/G061 of GA \v{C}R, Piguet and Rozho\v n were supported by the Czech Science Foundation, grant number GJ16-07822Y. \\
Extended abstract of this work was published as~\cite{KLIMOSOVA2017}.}
\begin{document}

\maketitle
\begin{abstract}
     Loebl, Koml\'os, and S\'os conjectured that any graph with at least half of its vertices of degree at least~$k$ contains every tree with at most~$k$ edges. We propose a version of this conjecture for \emph{skewed} trees, i.e., we consider the class of trees with at most~$k$ edges such that the sizes of the colour classes of the trees have a given ratio. We show that our conjecture is asymptotically correct for dense graphs. The proof relies on the regularity method. Our result implies bounds on Ramsey number of several trees of given skew. 
\end{abstract}

\section{Introduction}

Many problems in extremal graph theory ask whether a certain density condition imposed on a host graph guarantees the containment of a given subgraph~$H$. Typically, the density condition is expressed by average or minimum degree. Classical examples of results of this type are Tur\'{a}n's Theorem which determines the average degree that guarantees the containment of the complete graph $K_r$ and the Erd\H os-Stone Theorem~\cite{Erdos1946} which  essentially determines the average degree condition guaranteeing the containment of a fixed non-bipartite graph~$H$.
 On the other hand, for a general bipartite graph~$H$ the problem is wide open. For $H$ being a tree, the long-standing conjecture of Erd\H os and S\'os from 1962 asserts that an average degree greater than $k-1$ forces a copy of any tree of order $k+1$. (Note that a trivial bound for the average degree guaranteeing containment of such a tree is~$2k$, since in such a graph we can find a subgraph of minimum degree at least~$k$ and then embed~the tree greedily.) A solution of this conjecture for large~$k$, based on an extension of the Regularity Lemma, has been announced in the early 1990's by Ajtai, Koml\'os, Simonovits, and Szemer\'edi~\cite{Ajtai}.


The problem of containing a tree of a given size has also been studied in settings with different density requirements. Recently, Havet, Reed, Stein and Wood conjectured that a graph of minimum degree at least $\lfloor\frac{2k}{3}\rfloor$ and maximum degree at least $k$ contains a copy of any tree of order $k+1$ and provided some evidence for this conjecture.

Another type of density requirement, on which we focus in this paper, is considered in the Loebl-Koml\'os-S\'os conjecture. The conjecture asserts that at least half of the vertices of degree at least~$k$ guarantees containment of a tree of order $k+1$. In other words, the requirement of average degree in Erd\H os-S\'os conjecture is replaced by a median degree condition. 

The conjecture has been solved exactly for large dense graphs \cite{Cooley2009,Hladkyn} and proved to be asymptotically true for sparse graphs \cite{HladkyLKS1,HladkyLKS2,HladkyLKS3,HladkyLKS4} (see~\cite{Hladky2015} for an overview). 

All these conjectures are known to be best possible.
In particular, the Loebl-Koml\'os-S\'os conjecture is tight for paths. 
To observe this, consider a graph consisting of a disjoint union of copies of a graph~$H$ of order~$k+1$ consisting of a clique of size $\lfloor \frac {k+1}2\rfloor-1$, an independent set on the remaining vertices, and the complete bipartite graph between the two sets. Almost half of the vertices of this graph have degree~$k$, but it does not contain a path on~$k+1$ vertices as a subgraph.

A natural question is whether fewer vertices of degree~$k$ suffice when one considers only a restricted class of trees. Specifically, Simonovits asked [personal communication], whether it is the case for trees of given {\em skew}, that is, the ratio of sizes of the smaller and the larger colour classes is bounded by a constant smaller than~$1$.
We propose the following conjecture.

\begin{conj}\label{conj}
Any graph of order~$n$ with at least~$rn$ vertices of degree at least~$k$ contains every tree of order at most~$k+1$ with colour classes $V_1, V_2$ such that $|V_1|\le r \cdot (k+1)$. 
\end{conj}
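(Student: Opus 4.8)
I aim at Conjecture~\ref{conj} as stated. Two reductions cost no slack in the hypotheses. First, we may assume $|V(T)|=k+1$: if $T$ has an edge, attach new leaves to a fixed vertex of $V_1$ until $|V(T)|=k+1$; this only enlarges $V_2$, so $|V_1|\le r(k+1)$ still holds, and a graph containing the padded tree contains $T$ (if $T$ has no edge the claim is trivial). Second, we may assume $G$ is connected: writing the components as $G_1,\dots,G_c$ with $|V(G_j)|=n_j$ and $L_j$ the set of vertices of degree $\ge k$ in $G_j$, we have $\sum_j|L_j|\ge rn=r\sum_j n_j$, so some $G_j$ has $|L_j|\ge rn_j$, and this $G_j$ (which then contains a vertex of degree $\ge k$, so $n_j\ge k+1$) satisfies the hypothesis with $n_j$ in place of $n$. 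From now on let $L\subseteq V(G)$, $|L|\ge rn$, be the set of vertices of degree $\ge k$; note $|V_2|=k+1-|V_1|\ge(1-r)(k+1)$, and since $\sum_{v\in V_2}\deg_T(v)=k$ at least $(1-2r)(k+1)$ vertices of $V_2$ are leaves.

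The strategy is to embed $T$ so that \emph{every vertex of $V_1$ is mapped into $L$}. This has two payoffs. First, a vertex placed in $L$ has degree $\ge k=|V(T)|-1$, so a child of an already-embedded $V_1$-vertex can always be placed by a trivial count on occupied vertices; in particular all pendant leaves of $V_2$ may be attached last, greedily, with no risk of getting stuck. Second, it reduces the problem to embedding the \emph{pruned tree} $T'$ obtained from $T$ by deleting the leaves lying in $V_2$: since $V_2$ has at most $k-|V_2|<r(k+1)$ non-leaves, $T'$ has fewer than $2r(k+1)$ vertices, and it suffices to embed $T'$ into $G$ with all its $V_1$-vertices going into $L$.

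So the core task is: embed a tree on fewer than $2r(k+1)$ vertices into $G$ with its $V_1$-side inside $L$. Here the real difficulty lies, and one cannot hope for it to be elementary, because at $r=\frac12$ the skew condition holds for \emph{every} tree on $k+1$ vertices and Conjecture~\ref{conj} specializes to the Loebl-Koml\'os-S\'os conjecture itself, open in full generality. In a BFS embedding of $T'$, placing a $V_1$-vertex whose parent is a $V_2$-vertex sitting at a host vertex $y$ needs a free element of $N(y)\cap L$, and nothing yet forces $y$ to have any neighbour in $L$ at all. I would resolve this by the regularity method: apply the Szemer\'edi Regularity Lemma to $G$, pass the median-type degree condition to the cluster (reduced) graph $R$, roughly an $r$-fraction of whose clusters inherit the large-degree property, and then argue in $R$ by a stability dichotomy. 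Either $R$ contains a large \emph{unbalanced} connected matching --- enough edges, with the large-degree side in sufficient excess --- in which case $T$ is embedded by splitting its colour classes across the two sides of the matching, with $V_1$ on the large-degree side, and invoking the standard regularity-based tree-embedding lemma; or $R$, hence $G$, lies within $o(n)$ of the extremal configuration, a disjoint union of graphs each consisting of a clique of size $\approx r(k+1)$ completely joined to an independent set on the remaining vertices, in which case $T$ is placed by hand using the near-complete-bipartite structure between $L$ and its complement.

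The \textbf{main obstacle} is precisely this dichotomy at full strength. Regularity delivers the unbalanced connected matching only after absorbing an error term, i.e.\ with the threshold $rn$ softened to $(r+o(1))n$ and $n$ assumed large, and it is blind to $o(n)$-size near-extremal graphs; classifying \emph{all} graphs on $n$ vertices with exactly $\lceil rn\rceil$ vertices of degree $\ge k$ and no copy of a prescribed skewed $T$, for \emph{every} $n$, is the crux that keeps the exact conjecture open. Controlling the dichotomy only up to $o(n)$ errors is what the regularity argument of the present paper achieves, yielding the asymptotic form of Conjecture~\ref{conj} for dense host graphs.
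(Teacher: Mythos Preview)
The statement is Conjecture~\ref{conj}, which the paper does \emph{not} prove; the paper establishes only the asymptotic dense version, Theorem~\ref{thm:result}. Your proposal correctly recognises this --- you reduce as far as greedy arguments will go, identify the residual hard core (embedding the pruned tree $T'$ with $V_1$ inside $L$), observe that at $r=\tfrac12$ the conjecture specialises to Loebl--Koml\'os--S\'os itself, and then sketch a regularity-based route to an asymptotic statement rather than the exact one. So your proposal is, as you yourself concede, not a proof of the conjecture, and no proof exists against which to compare it.

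That said, your outline of the asymptotic argument diverges substantially from what the paper actually does for Theorem~\ref{thm:result}. You propose a stability dichotomy: either the reduced graph has a large unbalanced connected matching, or the host is close to the extremal configuration and is handled by hand. The paper uses \emph{no} stability (it mentions stability only in Section~\ref{sec:conclusion} as a possible route to the exact dense case). Instead, the paper
\begin{itemize}
\item applies the regularity lemma with an \emph{unbalanced} partition, making $L$-clusters smaller than $S$-clusters in the ratio $r':(1-r')$, so that an $L$--$S$ matching in the cluster graph covers as much of $S$ as possible --- a device you do not mention and which is essential to the whole analysis;
\item shows (Proposition~\ref{prop:comb}) that the resulting cluster graph always contains one of four concrete configurations A--D, each consisting of an adjacent pair of clusters with prescribed average degrees into $\mathcal L$, $\mathcal S_M$, $\mathcal S_1$; the case split is governed by the numerical parameters $a_1,a_2,b_1,b_2$ coming from the $\ell$-fine partition of $T$, not by whether the host is near-extremal;
\item embeds $T$ (Proposition~\ref{prop:mainembedding2}) by first placing the seed sets $W_A,W_B$ into the two distinguished clusters, then routing the microtrees of the fine partition through matching edges, through $\mathcal S_1$, or through $\mathcal L$, according to which configuration holds.
\end{itemize}
Your heuristic of sending all of $V_1$ into $L$ and sweeping up $V_2$-leaves greedily at the end is sound as a clean-up step --- the paper does something similar for left-over leaves --- but the paper does not force $T_1$ entirely into $\mathcal L$ during the main embedding; large parts of $T_1$ are deliberately placed in $\mathcal S_M$ or $\mathcal S_1$.

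In short: your reductions and the leaf-greedy finish are correct and worth keeping, but the core of your asymptotic plan is schematic and not the mechanism the paper employs. The unbalanced regularity partition and the four-configuration case analysis are the missing ingredients.
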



 If true, our conjecture is best possible for the similar reason as the Loebl-Koml\'os-S\'os conjecture. Indeed, given $r\in (0,1/2]$, consider a graph consisting of a disjoint union of copies of a graph~$H$ with $k+1$ vertices consisting of a clique of size $\lfloor r(k+1)\rfloor-1$, an independent set on the remaining vertices and the complete bipartite graph between the two sets (see Figure \ref{img:excase}). Such a graph does not contain a path on $2\lfloor r(k+1)\rfloor$ vertices (or, to give an example of a tree of maximal order, a path on $2\lfloor r(k+1)\rfloor$ vertices with one end-vertex identified with the centre of a star with $k+1-2\lfloor r(k+1)\rfloor$ leaves).

\begin{figure}
	\centering
	\includegraphics[width=.8\textwidth]{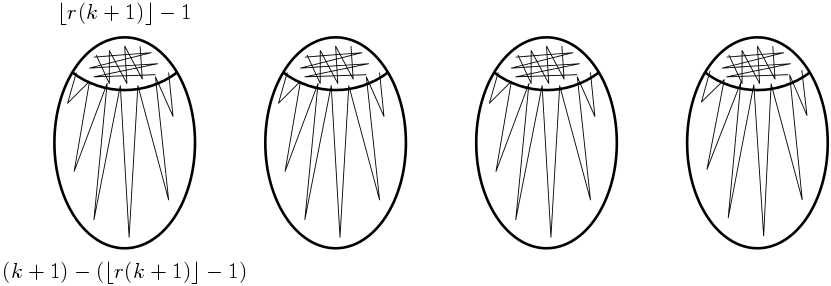}
	\caption{The graph showing the tightness of Conjecture~\ref{conj} is a disjoint union of graphs of order $k+1$.}
	\label{img:excase}
\end{figure}

We verified that the conjecture is true both for paths and for trees of diameter at most five \cite{Rozhon2018}. 
In this paper we prove that Conjecture~\ref{conj} is asymptotically correct for dense graphs. 
\begin{theorem}\label{thm:result}
Let $0<r\le 1/2$ and $q>0$. Then for any $\eta>0$ there exists $n_0\in \mathbb N$ such that for every $n\ge n_0$ and $k\geq qn$, any graph of order~$n$ with at least $rn$ vertices of degree at least $(1+\eta)k$ contains every tree of order at most~$k$ with colour classes $V_1, V_2$ such that $|V_1|\le rk$. 
\end{theorem}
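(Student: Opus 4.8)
The plan is to prove Theorem~\ref{thm:result} via the regularity method, following the standard template for results of Loebl-Koml\'os-S\'os type. First I would apply the Szemer\'edi Regularity Lemma to the host graph $G$, obtaining a cluster graph $\mathbf{G}$ on a bounded number of clusters, each of size roughly $n/M$, with almost all pairs $\varepsilon$-regular and most of the edges of $G$ preserved after deleting sparse and irregular pairs. The degree condition of $G$ — at least $rn$ vertices of degree at least $(1+\eta)k$ — translates, after the cleaning, into a corresponding condition on $\mathbf{G}$: at least $(r-o(1))$ fraction of the clusters have weighted degree at least roughly $(1+\eta/2)k/n$ in $\mathbf{G}$. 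Call such clusters \emph{large} and the remaining clusters \emph{small}. The point of the hypothesis $k \ge qn$ is that $k$ and $n$ are of the same order, so we may freely pass between "$k$ edges in the tree'' and "a $k/n$ fraction of the cluster structure.''

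Next I would run a structural dichotomy on the cleaned cluster graph, in the spirit of the sparse LKS proofs \cite{HladkyLKS1,HladkyLKS2,HladkyLKS3,HladkyLKS4}: either the cluster graph contains a suitable \emph{matching-type} configuration $(\mathbf{M})$ — a matching between large clusters of sufficient total weight — or it contains a \emph{dense-spot} configuration $(\mathbf{S})$ — essentially a large cluster (or union of clusters) of high internal density adjacent to enough weight. These are exactly the situations handled by the two embedding propositions \psm{} and \pls{} referenced in the preamble. The key new ingredient compared to classical LKS is that the tree $T$ to be embedded has skew: its smaller colour class $V_1$ has size at most $rk$. This is what allows the weaker degree hypothesis (only $rn$ large vertices instead of $n/2$): when greedily/regularity-embedding $T$, one maps $V_1$ into the large clusters and $V_2$ into arbitrary clusters, and since $|V_1| \le rk$ there is just enough room in the large part. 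The bookkeeping must ensure that at every stage the "demand'' placed on large clusters by the partially embedded tree never exceeds their "supply,'' which is where the ratio $r$ enters quantitatively, and the $(1+\eta)$ slack absorbs the $o(1)$ losses from regularity, the rounding in $\lfloor r(k+1)\rfloor$-type quantities, and the need to reserve a small proportion of each cluster.

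Concretely, the embedding proceeds as follows. Root $T$ and split it into $O(1/\eta)$ levels, or use the standard technique of cutting $T$ at a well-chosen cut vertex/edge into a bounded number of subtrees each of which is small relative to a cluster; embed the high parts of the tree into the dense or matching structure and hang the remaining small subtrees into regular pairs using the standard fact that a $(\varepsilon,d)$-regular pair with $d \gg \varepsilon$ contains any small bounded-degree... — no, $T$ need not have bounded degree, so instead one uses the fact that one can greedily embed a tree into a regular pair as long as the number of already-used vertices on each side is a small fraction of the cluster and each new vertex has a large neighbourhood on the opposite side; the colour classes of $T$ respect the bipartition of the pair. The main obstacle I anticipate is precisely this matching of the skew of $T$ with the structure of the cluster graph: one must choose the cut of $T$ so that the colour class that is forced into small clusters is the \emph{larger} one, and show that whenever the matching configuration $(\mathbf{M})$ fails, the dense configuration $(\mathbf{S})$ is available with enough weight to absorb all of $V_1$ plus the attached parts of $V_2$; controlling this trade-off, together with the careful accounting of reserved space so that the embedding never gets stuck, is the technical heart of the argument, and it is where \psm{} and \pls{} do the real work. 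The regularity setup and the translation of the degree condition are routine; the combinatorial case analysis of the cluster graph and the verification that the skew assumption yields exactly the right slack are the substance.
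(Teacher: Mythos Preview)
Your high-level plan---regularise, clean, translate the degree condition to the cluster graph, then embed via a structural case analysis---is the right skeleton and matches the paper. But two substantive ideas are missing, and both are needed precisely because $r<1/2$ breaks the symmetry of classical LKS.

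First, the paper does not work with an equitable cluster partition. After identifying $L$-sets (high average degree) and $S$-sets, it further subdivides them so that $(1-r')|C_L|=r'|C_S|$ for every $L$-cluster $C_L$ and $S$-cluster $C_S$: high-degree vertices are packed into \emph{smaller} clusters. This disbalanced partition is what makes an $L$--$S$ matching in the cluster graph cover as many low-degree vertices as possible, and the subsequent accounting (the $\tfrac{1-r}{r}$ factors in Proposition~\psm) depends on it. Second, your matching-versus-dense-spot dichotomy is too coarse. The paper always fixes a specific $L$--$S$ matching $\mathbf M$ and then runs a \emph{four}-way case split (Proposition~\ref{prop:comb}), where the cases are determined by parameters $a_1,a_2,b_1,b_2$ coming from the $\ell$-fine partition of the \emph{tree}, not from the host graph. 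In every case one locates two adjacent seed clusters $A,B$ with prescribed average degrees into $\mathcal L$, $\mathcal S_M$, $\mathcal S_1$, and the embedding (Proposition~\ref{prop:mainembedding2}) then uses \emph{both} $\mathbf M$ \emph{and} the high-degree $\mathcal L$-clusters simultaneously, in proportions dictated by the tree's fine partition. Your intuition that ``$V_1$ goes into large clusters, $V_2$ into arbitrary ones'' is not what actually happens: large pieces of the smaller colour class are routed into $S$-clusters via the matching, with their matched $L$-partners absorbing the other colour. Without the disbalanced clusters and the tree-driven four-case analysis, the bookkeeping you allude to does not close for $r<1/2$.
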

\marginpar{T: pridala jsem par vet o dukazu, budu rada kdyz si prectete, ze to dava smysl V: mne to dava smysl, D: mne to smysl moc nedavalo. Tvuj text jsem zachovala v komentare a skusila jsem jinou verzi. Dejte vedet, jestli tu vam take da smysl. V: za me OK}
This extends the main result of~\cite{PS07+}, which is a special case of Theorem~\ref{thm:result} for $r=1/2$.
 While we use and extend some of their techniques, our analysis is more complex. 
 As in~\cite{PS07+}, we partition the tree into small rooted subtrees, which we then embed into regular pairs of the host graph. In order to connect those small rooted trees, we need two adjacent clusters with adquate average degree to those regular pairs, which typically will be represented by a matching in the cluster graph. Hence, we need a matching in the cluster graph that is as large as possible. For this aim, we use disbalanced regularity decomposition (see~\cite{Haxell2002}), placing large degree vertices into smaller clusters than the remaining vertices, hence covering as many low degree vertices as possible by this matching.
 We then consider several possible embedding configurations in the regularity decomposition, depending on the structure of the cluster graph, in particular depending on the properties of the adjacent clusters with suitable average degree to the optimal matching. 
%

The structure of the rest of the paper is the following; in Section~\ref{sec:reg}, we introduce notions and results related to regularity. In Section~\ref{sec:prelim} we introduce tools necessary for the proof of Theorem~\ref{thm:result} which we present in Section~\ref{sec:proof}. In Sections~\ref{sec:comb} , ~\ref{sec:embedding} and~\ref{sec:main_prop} we prove our main tools from Section~\ref{sec:prelim};  Propositions~\ref{prop:comb} and~\ref{prop:mainembedding2}. 
In Section~\ref{sec:conclusion} we discuss implications of our conjecture for Ramsey numbers of trees and further research directions.


\section{Regularity}\label{sec:reg}
In this section we introduce a notion of {\em regular pair}, state the regularity lemma and introduce a standard method of embedding a tree into a regular pair.

  Let $G$ be a graph and let $X,Y$ be disjoint subsets of its vertices and $\varepsilon>0$. We define $E(X,Y)$ as the set of edges of $G$ with one end in $X$ and one end in $Y$ and the {\em density} of  the pair $(X,Y)$ as $\dens(X,Y)=\frac{|E(X,Y)|}{|X||Y|}$. The \emph{degree} $\deg(x)$ of a vertex~$x$ is the number of its neighbours. By  $\deg(x,X)$ we denote the number of neighbours of~$x$ in the set~$X$.
 We say that $(X,Y)$ is an {\em $\varepsilon$-regular pair}, if for every $X'\subseteq X$ and $Y'\subseteq Y$, $|X'|\geq \varepsilon |X|$ and $|Y'|\geq \varepsilon |Y|$, $\dens(X',Y')-\dens(X,Y)\leq \varepsilon$.

The following lemma states a well-known fact that subsets of a regular pair to some extent 'inherit' regularity of the whole pair.

\begin{lemma}\label{lem:subdivide}
 Let $G$ be a graph and $(X,Y)$ be an $\varepsilon$-regular pair of density $d$ in $G$. Let $X'\subseteq X$ and $Y\subseteq Y$ such that $|X'|\geq \alpha |X|$ and $|Y'|\geq \alpha |Y|$. Then, $(X',Y')$ is an $\varepsilon'$-regular pair of density at least $d-\varepsilon$, where $\varepsilon'=\max(\varepsilon/\alpha,2\varepsilon)$.
\end{lemma}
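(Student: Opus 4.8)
The plan is to verify the $\varepsilon'$-regularity of $(X',Y')$ directly from the definition, using the fact that any sufficiently large subset of $X'$ is also a sufficiently large subset of $X$.

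\medskip

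\noindent\textbf{Setup.}
First I would fix arbitrary subsets $X''\subseteq X'$ and $Y''\subseteq Y'$ with $|X''|\ge \varepsilon'|X'|$ and $|Y''|\ge \varepsilon'|Y'|$, and aim to show $\dens(X'',Y'')-\dens(X',Y')\le \varepsilon'$. Since $\varepsilon'\ge \varepsilon/\alpha$, we have $|X''|\ge (\varepsilon/\alpha)|X'|\ge (\varepsilon/\alpha)\cdot \alpha|X| = \varepsilon|X|$, and similarly $|Y''|\ge \varepsilon|Y|$. Hence the pair $(X'',Y'')$ is a valid "test pair" for the $\varepsilon$-regularity of the original pair $(X,Y)$, so $\dens(X'',Y'')\le \dens(X,Y)+\varepsilon$ (and similarly $\dens(X',Y')\le\dens(X,Y)+\varepsilon$, giving the density lower bound $\dens(X',Y')\ge d-\varepsilon$ once we also note $\dens(X',Y')\ge \dens(X,Y)-?$ — but in fact the asserted conclusion is only a \emph{lower} bound $d-\varepsilon$ on the density of $(X',Y')$, which follows from applying regularity with $X'\subseteq X$, $Y'\subseteq Y$: $\dens(X,Y)-\dens(X',Y')\le\varepsilon$ — wait, the definition as stated only bounds $\dens(X',Y')-\dens(X,Y)$; I would use the standard two-sided reading, or note that the one-sided version as stated already suffices here since we only need the lower bound, obtained by swapping the roles, or simply cite that this is the conventional formulation).

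\medskip

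\noindent\textbf{The density comparison.}
The core estimate is then
\[
\dens(X'',Y'')-\dens(X',Y') \;=\; \bigl(\dens(X'',Y'')-\dens(X,Y)\bigr) + \bigl(\dens(X,Y)-\dens(X',Y')\bigr).
\]
The first bracket is at most $\varepsilon$ by $\varepsilon$-regularity applied to $(X'',Y'')$, as argued above. For the second bracket, since $X'\subseteq X$ with $|X'|\ge\alpha|X|\ge\varepsilon|X|$ (using $\alpha\ge\varepsilon$, which we may assume as otherwise the statement is vacuous or $\varepsilon'\ge 1$) and likewise for $Y'$, $\varepsilon$-regularity of $(X,Y)$ gives $\dens(X,Y)-\dens(X',Y')\le\varepsilon$. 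Adding, $\dens(X'',Y'')-\dens(X',Y')\le 2\varepsilon\le\varepsilon'$, which is what we wanted. I would also record the density lower bound $\dens(X',Y')\ge d-\varepsilon$ as the special case already used in the second bracket.

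\medskip

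\noindent\textbf{Main obstacle.}
This is a routine inheritance argument and there is no serious obstacle; the only points requiring mild care are (i) the two regimes of $\varepsilon'=\max(\varepsilon/\alpha,2\varepsilon)$ — the factor $\varepsilon/\alpha$ is needed precisely so that a subset of relative size $\varepsilon'$ inside $X'$ remains of relative size $\ge\varepsilon$ inside $X$, while the factor $2\varepsilon$ absorbs the two separate $\varepsilon$-errors in the telescoping above — and (ii) whether one reads the stated (one-sided) definition of regularity literally or in its standard absolute-value form; since only a density \emph{lower} bound and an \emph{upper} bound on $\dens(X'',Y'')-\dens(X',Y')$ are claimed, the one-sided reading as written is in fact exactly enough, provided one is careful to apply it with the right pair in the larger role each time. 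I would therefore present the proof as the three-line telescoping estimate above, preceded by the size bookkeeping.
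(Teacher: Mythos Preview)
The paper does not prove this lemma; it is stated as a well-known fact and left without proof. Your argument is the standard inheritance argument and is correct, provided the paper's definition of $\varepsilon$-regularity is read in its usual two-sided form $|\dens(X',Y')-\dens(X,Y)|\le\varepsilon$ (the one-sided inequality written in the paper is almost certainly a typo, and you rightly flag this). The size bookkeeping $|X''|\ge\varepsilon'|X'|\ge(\varepsilon/\alpha)\cdot\alpha|X|=\varepsilon|X|$ followed by the telescoping
\[
\dens(X'',Y'')-\dens(X',Y')=\bigl(\dens(X'',Y'')-\dens(X,Y)\bigr)+\bigl(\dens(X,Y)-\dens(X',Y')\bigr)\le\varepsilon+\varepsilon=2\varepsilon\le\varepsilon'
\]
is exactly how this fact is always proved. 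For a clean write-up you should strip out the internal deliberation (``wait\ldots'', the nested hedges) and simply state the two-sided reading up front; the mathematics underneath is fine.
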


We say that a partition $\{V_0,V_1,\ldots, V_N\}$ of $V(G)$ is an {\em $\varepsilon$-regular partition}, if $|V_0|\leq \varepsilon |V(G)|$ all but at most $\varepsilon N^2$ pairs $(V_i,V_j)$, $i<j$, $i,j\in[N]$, are $\varepsilon$-regular. We call $V_0$ a {\em garbage set}. We call a regular partition {\em equitable} if $|V_i|=|V_j|$ for every $i,j\in [N]$.

\begin{lemma}[Szem\'eredi regularity lemma] \label{lem:reg}
For every $\varepsilon>0$ and $N_{min}\in \NN$ there exists $N_{max} \in \NN$ and $n_R\in \NN$ such that every graph $G$ on at least $n_R$ vertices admits an $\varepsilon$-regular partition $\{V_0,\ldots, V_N\}$, where $N_{min}\leq N \leq N_{max}$.
\end{lemma}

Given an $\varepsilon$-regular pair $(X,Y)$,
we call a vertex $x\in X$ {\em typical} with respect to a set $Y'\subseteq Y$ if $\deg(x,Y')\ge  (d(X,Y)-\varepsilon)|Y'|$.
  Note that from the definition of regularity it follows that all but at most $\varepsilon |X|$ vertices of $X$ are typical with respect to any subset of $Y$ of size at least $\varepsilon|Y|$. This observation can be strengthened as follows.

\begin{lemma}[Variant of Proposition 4.5 in~\cite{Zhao2011}] \label{lem:ultratypical}
Let $\{V_0,V_1,\ldots, V_N\}$ be an $\varepsilon$-regular partition of $V(G)$ and let $X=V_j$ for some $j\in [N]$.
Then all but at most $\sqrt{\varepsilon}|X|$ vertices of a cluster $X$ are typical w. r. t. all but at most $\sqrt{\varepsilon}N$ sets $V_i$, $i\in [N]\setminus {j}$. We call such vertices of $X$ {\em ultratypical}.
\end{lemma}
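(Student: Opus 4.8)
The plan is to do a double-counting argument over "bad" pairs $(x, V_i)$, where $x \in X$ is not typical with respect to $V_i$. First I would set up the relevant count: call a cluster $V_i$, $i \in [N] \setminus \{j\}$, \emph{bad for $x$} if $\deg(x, V_i) < (d(X, V_i) - \varepsilon)|V_i|$. Since the partition is $\varepsilon$-regular, for each fixed $i$ such that the pair $(X, V_i)$ is $\varepsilon$-regular, the standard observation quoted just before the lemma gives that at most $\varepsilon|X|$ vertices $x \in X$ are non-typical with respect to $V_i$ (applying the definition of regularity with $Y' = V_i$ itself, or more carefully with the set of non-typical vertices as $X'$). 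For the at most $\varepsilon N^2$ irregular pairs, we simply give up and allow all of $X$ to be bad. Hence, summing over $i$, the total number of bad pairs $(x, V_i)$ is at most $\varepsilon |X| \cdot N + \varepsilon N^2 \cdot |X| \le 2\varepsilon N |X|$ — wait, the irregular-pair contribution is $\varepsilon N^2 |X|$, which dominates; more precisely the total is at most $\varepsilon N |X| + \varepsilon N^2 |X| \le 2 \varepsilon N^2 |X|$.

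Hmm, that bound is too weak to get a clean $\sqrt{\varepsilon}$. Let me reconsider: the number of irregular pairs is at most $\varepsilon N^2$, but each cluster $V_i$ participates in at most $N-1$ pairs, so the number of indices $i$ for which $(X, V_i)$ is irregular is at most... nothing better than $\varepsilon N^2$ in general, but actually since we fixed $X = V_j$, the pairs $(X, V_i)$ form a single "star" of $N-1$ pairs, and among \emph{all} $\binom{N}{1}\cdot$ pairs at most $\varepsilon N^2$ are irregular; there is no reason all the irregular ones avoid the star. However, here is the fix: we only need this to hold for \emph{most} clusters $X = V_j$. So instead I would count triples, or rather, count over $j$ as well. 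Let me restructure: say $(X, V_i)$ is irregular for at most $\varepsilon N^2$ of the $\binom{N}{2}$ unordered pairs; hence by averaging, all but at most $\sqrt{\varepsilon} N$ clusters $V_j$ have the property that $(V_j, V_i)$ is irregular for at most $\sqrt{\varepsilon} N$ indices $i$. Restrict attention to such a "good" cluster $X = V_j$; this is where the phrase "all but" in the lemma's conclusion implicitly lives (the lemma as stated should be read: for all but $\sqrt\varepsilon N$ clusters $X$, ...), or the constant is absorbed.

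So the main argument, for a good cluster $X$: at most $\sqrt{\varepsilon} N$ of the pairs $(X, V_i)$ are irregular. For the remaining at least $N - 1 - \sqrt{\varepsilon}N$ regular pairs, at most $\varepsilon |X|$ vertices of $X$ are non-typical w.r.t. $V_i$. Double-counting bad pairs $(x, V_i)$ with $(X, V_i)$ regular: their number is at most $\varepsilon |X| \cdot N$. Call $x \in X$ \emph{ultratypical} if it lies in at most $\sqrt{\varepsilon} N$ bad-or-irregular clusters. A vertex $x$ that is \emph{not} ultratypical is non-typical w.r.t. more than $\sqrt{\varepsilon}N - \sqrt\varepsilon N = $ ... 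I'd need to be careful combining the two sources; set it up so that a non-ultratypical $x$ contributes more than $\sqrt\varepsilon N - \sqrt\varepsilon N$... better: a non-ultratypical $x$ is bad w.r.t. more than $\sqrt\varepsilon N$ \emph{regular} clusters (having set aside the $\le \sqrt\varepsilon N$ irregular ones and, say, doubled constants). Then the number of such $x$ is at most $\frac{\varepsilon |X| N}{\sqrt{\varepsilon} N} = \sqrt{\varepsilon}|X|$, which is exactly the claimed bound. Choosing constants and $\varepsilon$ small enough, and relabelling $\sqrt{\varepsilon}$ versus $2\sqrt\varepsilon$ etc., gives the statement.

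The main obstacle, and the point requiring care, is the bookkeeping of the two error sources (irregular pairs vs. non-typical vertices within regular pairs) and making sure the "for all but $\sqrt\varepsilon N$ clusters $X$" caveat is handled — since as literally stated the lemma would be false for an $X$ that happens to lie in many irregular pairs. I expect the cleanest route is: (i) averaging to restrict to good clusters $X$; (ii) a single double-counting inequality $\#\{\text{bad regular pairs } (x,V_i)\} \le \varepsilon N |X|$; (iii) Markov-type inequality to bound the number of non-ultratypical $x$; (iv) absorb all constants into a final $\sqrt\varepsilon$ by shrinking $\varepsilon$. Everything else is routine.
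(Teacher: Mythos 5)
Your core argument is the same as the paper's: the paper proves the contrapositive by noting that more than $\sqrt{\varepsilon}|X|$ vertices each non-typical to more than $\sqrt{\varepsilon}N$ clusters would give more than $\varepsilon|X|N$ bad vertex--cluster pairs, hence by pigeonhole a single cluster $Y$ with at least $\varepsilon|X|$ non-typical vertices of $X$, contradicting regularity of $(X,Y)$; your double count plus Markov is exactly this, read in the other direction. Your worry about irregular pairs is legitimate and is in fact a gap in the paper's own proof, not in yours: the paper's pigeonhole step silently assumes the extracted pair $(X,Y)$ is $\varepsilon$-regular, and as you observe, a cluster $X$ lying in many irregular pairs (which an $\varepsilon$-regular partition permits, since only the total count of irregular pairs is bounded by $\varepsilon N^2$) could have essentially all of its vertices non-typical to more than $\sqrt{\varepsilon}N$ clusters, falsifying the literal statement. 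Your repair via averaging over $j$ to restrict to clusters in at most $\sqrt{\varepsilon}N$ irregular pairs is sound, at the cost of constants. It is worth noting, though, that in the only place the lemma is used, the partition is the one underlying an $r$-skew LKS-graph, in which by definition \emph{every} pair $(L_i,L_j)$ and $(L_i,S_j)$ is $\varepsilon$-regular; so in the intended application there are no irregular pairs and the paper's shorter argument (and the literal statement) goes through without your extra bookkeeping.
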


\begin{proof}
Suppose, for a contradiction, that there are more than $\sqrt{\epsilon} |X|$ vertices of a cluster $X$ that are not typical to more than $\sqrt{\epsilon} N$ clusters. Then we have at least $\sqrt{\epsilon}|X| \cdot \sqrt{\epsilon}N = \epsilon |X|N$ pairs formed by a cluster and a vertex from $X$ not typical to that cluster. This in turn means that there is a cluster $Y$ such that the number of vertices not typical to $Y$ is at least $\epsilon |X|$. But then the set of these vertices contradicts the regularity of the pair $XY$. 
\end{proof}

Next lemma states that a tree can be embedded in a sufficiently large subset of a regular pair, each of the colour classes being embedded in one 'side'. Moreover we can prescribe embedding of a few vertices.
\begin{lemma}\label{lem:tree-emb}
Let $T$ be a tree with colour classes $F_1$ and $F_2$. Let $R\subseteq F_1$, $|R|\leq 2$ such that vertices of $R$ do not have a common neighbour in $T$ (if $|R|=2$). 

Let $\varepsilon > 0$ and $\alpha > 2\varepsilon$. Let $(X, Y)$ be an $\varepsilon$-regular pair in a graph $G$ with density $\dens(X, Y) > 3\alpha$ such that $|F_1|\leq \varepsilon |X|$ and $|F_2|\leq \varepsilon |Y|$. Let  $X' \subseteq X, Y' \subseteq Y$ be sets satisfying $|X'| > 2\frac{\varepsilon}{\alpha}|X|, |Y'| > 2\frac{\varepsilon}{\alpha}|Y|$.

Let $\varphi$ be any injective mapping of vertices of $R$ to vertices of $X'$ with degree greater than $3\varepsilon|Y|$ in $Y'$. Then there exists extension of $\varphi$ that is an injective homomorphism from $T$ to $(X,Y)$ satisfying $\varphi(F_1)\subseteq X'$ and $\varphi(F_2)\subseteq Y'$.

\end{lemma}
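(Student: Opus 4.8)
The plan is to embed $T$ greedily by processing its vertices in a breadth-first order rooted so that the prescribed vertices of $R$ come first, maintaining at every step a large ``reservoir'' of candidate images in $X'$ and $Y'$ for the not-yet-embedded part of $T$. First I would fix a BFS order $v_1, v_2, \dots, v_{|T|}$ of $V(T)$ such that the (at most two) vertices of $R$ are an initial segment; since the vertices of $R$ have no common neighbour, they can be taken as roots of a spanning forest with at most two components, and the hypothesis guarantees $\varphi$ is already defined on them with images in $X'$ of $Y'$-degree $>3\varepsilon|Y|$. Vertices in $F_1$ will be sent to $X'$ and vertices in $F_2$ to $Y'$; when we come to embed $v_i$ with parent $v_{p(i)}$ already placed, we need an image that is adjacent (in $G$) to $\varphi(v_{p(i)})$, lies in the correct side, is not yet used, and — crucially — is \emph{typical} with respect to the opposite side so that the process can continue past $v_i$.

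The key invariant I would maintain is: after embedding $v_1, \dots, v_i$, for each not-yet-embedded vertex $w$ whose parent is among $v_1,\dots,v_i$ there is a set $C(w)$ of at least, say, $(\dens(X,Y)-2\varepsilon)^{\mathrm{depth}(w)} \cdot 2\frac{\varepsilon}{\alpha}|X|$ (respectively with $|Y|$) candidate images, all adjacent to $\varphi(\mathrm{parent}(w))$ and all avoiding the currently used vertices. The point is that $|T|$ has only $|F_1| \le \varepsilon|X|$ vertices on one side and $|F_2| \le \varepsilon|Y|$ on the other, so at most $\varepsilon|X|$ vertices of $X'$ and $\varepsilon|Y|$ of $Y'$ are ever used; since $\dens(X,Y) > 3\alpha$ and $\alpha > 2\varepsilon$, the candidate sets stay comfortably larger than $\varepsilon|X|$ (resp.\ $\varepsilon|Y|$) throughout — indeed the tree has bounded depth relative to these parameters because its total size is only an $\varepsilon$-fraction of a cluster, so the exponentially shrinking factor never brings us below the regularity threshold. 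Concretely, to embed $v_i$: its parent's image $\varphi(v_{p(i)})$ is typical w.r.t.\ the current candidate reservoir $C(v_i)$ on the opposite side (we only ever placed \emph{typical} vertices, plus the initial $R$-vertices which were assumed to have large degree into $Y'$), so $\varphi(v_{p(i)})$ has at least $(\dens(X,Y)-\varepsilon)|C(v_i)|$ neighbours there; discard the already-used vertices and the at-most-$\varepsilon|X|$ (resp.\ $\varepsilon|Y|$) atypical vertices of that side, and what remains is nonempty — pick any such vertex as $\varphi(v_i)$, and set the new candidate sets for each child of $v_i$ to be the neighbourhood of $\varphi(v_i)$ in the appropriate side minus used vertices.

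For the base case I would observe that $|X'| > 2\frac{\varepsilon}{\alpha}|X| > 4\varepsilon|X| \ge 2\varepsilon|X|$ (using $\alpha > 2\varepsilon$ twice, or rather $\alpha < 1/2$ so $2\varepsilon/\alpha > 4\varepsilon$), hence $X'$ and $Y'$ are large enough to apply typicality/regularity inside them directly, and by Lemma~\ref{lem:subdivide} the pair $(X',Y')$ is itself regular with density at least $\dens(X,Y)-\varepsilon > 3\alpha - \varepsilon$; in fact it is cleanest to work with $(X', Y')$ from the start, noting that the prescribed images $\varphi(R)$ already lie in $X'$ with large $Y'$-degree. The main obstacle, and the step I would be most careful about, is the bookkeeping showing that the greedy choice never gets stuck: one must verify that the number of forbidden vertices (used $\cup$ atypical) on each side stays strictly below the size of every active candidate set, which requires quantifying how the depth of $T$ interacts with the density exponent — but since $|V(T)| \le \varepsilon(|X|+|Y|)$ forces the depth to be small compared to the reciprocal of $\log(1/(\dens-2\varepsilon))$ in the relevant regime (or, more simply, one replaces the exponential bound by noting that each candidate set is the neighbourhood of a single typical vertex, hence of size $\ge (\dens - \varepsilon)|{\rm previous\ reservoir}| \ge (\dens-\varepsilon) \cdot 2\frac\varepsilon\alpha \min(|X|,|Y|)$ after one step and we only need it to exceed $\varepsilon\min(|X|,|Y|)$, which holds because $(\dens-\varepsilon)\cdot 2/\alpha > (3\alpha - \varepsilon)\cdot 2/\alpha > 1$). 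That inequality, together with the disjointness of the two colour classes guaranteeing we never overfill a side, closes the argument.
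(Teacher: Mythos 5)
The greedy core of your argument is sound and, after the simplification you note in your final parenthetical, coincides with the paper's: embed each new vertex into an unused vertex of $X'$ (resp.\ $Y'$) that is \emph{typical with respect to the fixed set} $Y'$ (resp.\ $X'$), and check $(\dens-\varepsilon)\cdot 2\tfrac{\varepsilon}{\alpha}|Y| - 2\varepsilon|Y| > \varepsilon|Y|$ using $\dens>3\alpha$ and $\alpha>2\varepsilon$. (Your earlier framing with exponentially shrinking reservoirs and typicality ``w.r.t.\ the current candidate reservoir'' would not work as stated, since regularity gives no control over typicality towards sets smaller than $\varepsilon|Y|$; but you correctly discard that in favour of the fixed-target version, so this is not the real problem.)

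The genuine gap is the case $|R|=2$. You write that the two vertices of $R$ ``can be taken as roots of a spanning forest with at most two components'' and then embed greedily by BFS. But $T$ is connected: a BFS forest rooted at $u$ and $v$ omits exactly one edge of $T$, namely an edge on the unique $u$--$v$ path where the two BFS frontiers meet. Your greedy procedure embeds the two endpoints of that edge independently (each via its own BFS parent) and nothing guarantees their images are adjacent in $G$, so the resulting map is not a homomorphism. This is precisely the step the paper's proof is built around: it embeds the $u$--$v$ path from the $u$ side up to the last two internal vertices, and then uses $\varepsilon$-regularity to find an edge between the set $X''$ of unused typical neighbours of the last embedded vertex and the set $Y''$ of unused typical neighbours of $\varphi(v)$ (both of size exceeding $\varepsilon|X|$, resp.\ $\varepsilon|Y|$, by the degree computation), placing the two remaining internal vertices on that edge. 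The hypothesis that the vertices of $R$ have no common neighbour is what guarantees the path has enough internal vertices for this manoeuvre; your proposal never uses it for that purpose and never performs the reconnection, so the $|R|=2$ case is not proved.
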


\begin{proof} 
We embed vertices of $V(T)\setminus R$ into vertices of $X'$ and $Y'$ which are typical to $Y'$ and $X'$, respectively.
Assume that we have already embedded some part of the tree in this way. We claim that every vertex of this partial embedding in $X$ is incident with more than $\varepsilon |Y|$ vertices typical with respect to $X'$ which have not been used for the partial embedding.
Similarly, every vertex of the partial embedding in $Y$ is incident with more than $\varepsilon |X|$ vertices typical with respect to $Y'$, which have not been used for the partial embedding.

We give arguments only for vertices embedded into $X$, arguments for vertices embedded into $Y$ are symmetric.
For $\varphi(r)\in X$, $r\in R$, the claim follows from the fact that $\varphi(r)$ has  more than $3 \varepsilon |Y|$ neighbors in $Y'$ and out of them, at most $\varepsilon |Y|$ are not typical with respect to $X'$ and at most $\varepsilon |Y|$ have already been used for the partial embedding.
Let $\varphi(v)$, $v\in V(T)\setminus R$ be a vertex of the partially constructed embedding and without loss of generality assume $\varphi(v)\in X'$.
 Since $\varphi(v)$ was chosen to be typical with respect to $Y'$, it is adjacent to at least $(d-\varepsilon)|Y'|$ vertices of $Y'$. Again, out of these vertices, at most $\varepsilon |Y|$ are not typical with respect to $X'$ and at most $\varepsilon |Y|$ have already been used for the partial embedding. Thus, $\varphi(v)$ is typical to at least $(d-\varepsilon)|Y'|-2\varepsilon |Y|>((d-\varepsilon)2\frac{\varepsilon}{\alpha}-2\varepsilon) |Y|$. This is strictly greater than $\varepsilon |Y|$, since $d>3\alpha$ and $\alpha>2\varepsilon$. 

It follows that if $|R|<2$, we can construct embedding greedily.


If $|R|=2$, $R=\{u,v\}$, we first embed vertices of a path connecting $u$ and $v$, starting from $u$ and embedding all but the last two internal vertices of a path into typical vertices, last embedded vertex being $u'$. Then we find an edge between the sets the set $X''$ of vertices of $N(u')$ which are typical to $Y'$ and set $Y''$ of vertices of $N(v)$ which are typical to $X'$.
Since, $X''$ and $Y''$ have size greater than $\varepsilon |X|$ and $\varepsilon |Y|$, respectively by our previous argument, from $\varepsilon$-regularity of $(X,Y)$, it follows that there is an edge $xy$ between $X''$ and $Y''$. We embed the last two internal vertices to $x$ and $y$.


\end{proof}






\section{Preliminaries}\label{sec:prelim}

We shall switch freely between a graph~$H$ and its corresponding cluster graph~$\mathbf H$. For example $A\subseteq V(H)$ may as well denote a cluster in an original graph, as $A\in V(\mathbf H)$ a vertex in the corresponding cluster graph. We shall freely use the term \emph{clusters} in a cluster graph~$\mathbf H$ to denote vertices of~$\mathbf H$.  If $\mathcal S\subseteq V(\mathbf H)$ denotes a set of clusters, then $\bigcup \mathcal S$ denotes the corresponding union of vertices in the original graph~$H$. If $A\in V(\mathbf H)$ is a cluster and $\mathcal S\subseteq V(\mathbf H)$ a set of clusters, then $\avdeg(A,\mathcal S)$ denotes the average degree of vertices in $A$ to $\bigcup \mathcal S$ and $\avdeg(A)$ stands short for $\avdeg(A,V(\mathbf H))$. 

We shall use the following notation. The class of all trees of order $k$ is denoted as $\fT_k$. 
For a graph $G$ and two sets $A \in V(G)$ and $B \in V(G)$ let $G[A,B]$ denote the subgraph of $G$ induced by all edges with one endpoint in $A$ and the other in $B$. 

\begin{definition}
Let~$r\le 1/2$. We say that a graph $H$ is an {\em $r$-skew LKS-graph with parameters $(k,\eta, \varepsilon, d)$} if there exists a partition  $\{L_1,\ldots, L_{m_L}, S_1,\ldots, S_{m_S}\}$ of $V(H)$ satisfying the following

\begin{enumerate}
    \item $m_L\geq (1+\eta)m_S$, 
    \item all sets $L_i$ have the same size and all sets $S_i$ have the same size,
    \item $r|S_j|=(1-r)|L_i|$ for all $i, j$,
    \item each $(L_i,L_j)$, $i,j\in [m_L]$ and each $(L_i,S_j)$, $i\in [m_L],j\in [m_S]$ is an  $\varepsilon$-regular pair of density either $0$ or at least $d$,
    \item there are no edges inside the sets and no edges between $S_i$ and $S_j$ for $i\neq j$,
    \item average degree of vertices in each $L_i$ is at least $(1+\eta)k$.
\end{enumerate}

We call the sets $L_i$, $i\in [m_L]$, the $L$-clusters. Similarly, we call the sets $S_i$, $i\in [m_S]$, the $S$-clusters.

Let $\mathbf{H}$ be the graph with vertex set $\{L_1,\ldots, L_{m_L}, S_1,\ldots, S_{m_S}\}$ and with an edge $(L_i,L_j)$, $(L_i,S_j)$ whenever $(L_i,L_j)$ or $(L_i,S_j)$, respectively forms an $\varepsilon$-regular pair of positive density in $H$. 
Observe that for any edge $(L_i, L_j)$ we have $\avdeg(L_i, L_j) = \avdeg(L_j, L_i)$, but for any edge $(L_i, S_j)$ we have $r \cdot \avdeg(L_i, S_j) = (1-r) \cdot \avdeg(S_j, L_i)$. We call $\mathbf{H}$ the $r$-skew LKS-cluster graph. 
We use a dot instead of an explicit parameter when the value of the parameter is not relevant in the given context.
\end{definition}

\begin{proposition}\label{prop:degrutratypical}
Let~$H$ be an $r$-skewed LKS graph of order $n$ with parameters $(\cdot, \cdot, \varepsilon, \cdot)$ and let~$\mathbf H$ be its corresponding cluster graph. 
\begin{enumerate}
    \item Let $C$ and $D$ be an $L$-cluster and  an $S$-cluster of $\mathbf H$, respectively. Then $|C|\le n/|V(\mathbf H)|$ and $|D|\le \frac{n}{r|V(\mathbf H)|}$.
    \item If $v\in V(H)$ is an ultratypical vertex and $\mathcal S\subseteq V(\mathbf H)$, then $\deg(v, \bigcup \mathcal S)\ge \avdeg (C, \mathcal S)-2\sqrt{\varepsilon}n/r$, where~$C$ is the cluster of~$\mathbf H$ containing~$v$.
\end{enumerate}
\end{proposition}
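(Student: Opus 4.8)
The plan is to handle the two parts separately; both amount to bookkeeping, with part~(2) resting on Lemma~\ref{lem:ultratypical} together with the size estimates of part~(1).

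For part~(1), I would first record the cluster sizes. Let $\ell$ be the common size of the $L$-clusters and $s$ the common size of the $S$-clusters (these are well defined by property~(2) of the definition), so that $n=m_L\ell+m_S s$. Property~(3) gives $rs=(1-r)\ell$, i.e.\ $s=\frac{1-r}{r}\ell$; since $r\le 1/2$ this yields $s\ge\ell$ and also $\ell=\frac{r}{1-r}s\ge rs$. The first bound then follows from $n=m_L\ell+m_S s\ge(m_L+m_S)\ell=|V(\mathbf H)|\,\ell$, and the second from $n=m_L\ell+m_S s\ge m_L\,rs+m_S\,s\ge r(m_L+m_S)s=r\,|V(\mathbf H)|\,s$.

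For part~(2), write $N=|V(\mathbf H)|$ and let $C$ be the cluster containing~$v$. The first step is to rewrite both sides as sums over the clusters of $\mathcal S$: a double-counting argument gives $\avdeg(C,\mathcal S)=\sum_{D\in\mathcal S}\frac{|E(C,D)|}{|C|}=\sum_{D\in\mathcal S}\dens(C,D)|D|$, while trivially $\deg(v,\bigcup\mathcal S)=\sum_{D\in\mathcal S}\deg(v,D)$. Since $v$ is ultratypical, Lemma~\ref{lem:ultratypical} tells us that $v$ is typical with respect to all but at most $\sqrt\varepsilon N$ clusters; split $\mathcal S=\mathcal S_{\mathrm{good}}\cup\mathcal S_{\mathrm{bad}}$ accordingly, so $|\mathcal S_{\mathrm{bad}}|\le\sqrt\varepsilon N$. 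For $D\in\mathcal S_{\mathrm{good}}$ typicality gives $\deg(v,D)\ge(\dens(C,D)-\varepsilon)|D|$, and for $D\in\mathcal S_{\mathrm{bad}}$ I would use the trivial bound $\deg(v,D)\ge 0\ge\dens(C,D)|D|-|D|$ together with $|D|\le n/(rN)$ from part~(1) (the $S$-clusters being the larger ones). Summing and using $\sum_{D}|D|\le n$ yields
\[
\deg(v,\bigcup\mathcal S)\ \ge\ \avdeg(C,\mathcal S)-\varepsilon n-\sqrt\varepsilon N\cdot\frac{n}{rN}\ \ge\ \avdeg(C,\mathcal S)-\frac{2\sqrt\varepsilon n}{r},
\]
where the last step uses $\varepsilon\le\sqrt\varepsilon$ and $r\le 1$ to absorb $\varepsilon n$ into $\sqrt\varepsilon n/r$.

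I do not expect either part to present a genuine difficulty; the only point needing care is the error accounting in part~(2)---namely bounding the total contribution of the up to $\sqrt\varepsilon N$ atypical clusters by invoking the size bound of part~(1), and then folding the two error terms into the single clean quantity $2\sqrt\varepsilon n/r$. A minor technicality is that if $C\in\mathcal S$ the term $D=C$ contributes $0$ on both sides (no edges lie inside a cluster) and can simply be discarded.
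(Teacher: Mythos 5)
Your proposal is correct and follows essentially the same route as the paper: part (1) is the same size bookkeeping via $|D|=\frac{1-r}{r}|C|$, and part (2) is the paper's argument of discarding the at most $\sqrt\varepsilon|V(\mathbf H)|$ atypical clusters (losing at most $\sqrt\varepsilon n/r$ by part (1)) and applying typicality to the rest (losing at most $\varepsilon n$). The only difference is presentational: you expand the average degree as a sum of $\dens(C,D)|D|$ over clusters, whereas the paper works directly with $\avdeg(C,\cdot)$ restricted to the good clusters.
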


\begin{proof}
${}$
\begin{enumerate}
    \item 
    The first inequality follows from the fact that the size of $L$-clusters is always at most the size of $S$-clusters. Then we compute $|D|=\frac{1-r}{r}|C|\le \frac{n}{r|V(\mathbf H)|}$.

    \item
    If~$v$ is ultratypical, there are at most $\sqrt{\varepsilon}|V(\mathbf H)|$ clusters~$D$ in~$\mathbf H$ such that~$v$ is not typical to~$D$. Denote by $\mathcal D$ the set of those clusters. Then by (1) we have $|\bigcup \mathcal D|\le |\mathcal D|\cdot n/(r|V(\mathbf H)|)\le \sqrt{\varepsilon}n/r$. Then 
    \begin{align*}
        \deg(v, \bigcup \mathcal S)&\ge \deg(v,\bigcup (\mathcal S \setminus \mathcal D))\\
        &\ge \avdeg(C, \bigcup (\mathcal S \setminus \mathcal D))-\varepsilon n\\
        &\ge \avdeg(C, \bigcup \mathcal S )-|\bigcup \mathcal D|-\varepsilon n\\
         &\ge \avdeg(C, \bigcup \mathcal S ) - 2\sqrt{\varepsilon}n/r\;.
    \end{align*}
\end{enumerate}
\end{proof}

 \begin{definition}\cite[Definition 3.3] {HladkyLKS4}
    \label{def:partition}
Let $T\in \mathcal T_{k+1}$ be a tree rooted at~$r$. An
\em $\ell$-fine partition of~$T$ is a
quadruple $(W_A,W_B, \mathcal D_A,
\mathcal D_B)$, where $W_A,W_B\subseteq V(T)$ and $\mathcal D_A$ and $\mathcal D_B$ are families of subtrees of~$T$ such that
\begin{enumerate}
\item  the three sets $W_A$, $W_B$ and $\{V(T^*)\}_{T^*\in\mathcal D_A\cup
\mathcal D_B}$ partition $V(T)$ (in particular, the trees in $T^*\in\mathcal D_A\cup
\mathcal D_B$ are pairwise vertex disjoint),\label{decompose}
\item $r\in W_A\cup W_B$,\label{root}
\item $\max\{|W_A|,|W_B|\}\leq 336k/{\ell}$,\label{few}
\item for $w_1,w_2\in W_A\cup W_B$ the distance $\dist(w_1,w_2)$ is odd if and only if one of them lies in $W_A$ and the other one in $W_B$,\label{parity}
\item $v(T^*)\leq \ell$ for every tree $T^*\in \mathcal D_A\cup
\mathcal D_B$,\label{small}
\item $V(T^*)\cap N(W_B)=\emptyset$ for every $T^*\in
\mathcal D_A$ and $V(T^*)\cap N(W_A)=\emptyset$ for every $T^*\in
\mathcal D_B$,\label{nice}
\item for each tree  $T^*\in \mathcal D_A\cup\mathcal D_B$ $N_T(V(T^*))\setminus V(T^*)\subseteq W_A\cup W_B$,\label{cut:precede}
\item  $|N(V(T^*))\cap (W_A\cup W_B)|\le 2$ for each $T^*\in\mathcal D_A\cup \mathcal D_B$,\label{2seeds}
\item if $N(V(T^*))\cap (W_A\cup W_B)$ contains two distinct vertices $z_1$ and $z_2$ for some  $T^*\in\mathcal D_A\cup
\mathcal D_B$, then $\dist_T(z_1,z_2)\ge 6$,\label{short}
\label{Bsmall}
\end{enumerate}
 \end{definition}

 Here we did not list all properties from~\cite{HladkyLKS4}, only the ones we need.
 \begin{proposition}\cite[Lemma~5.3]{Hladkyn}\label{prop:cutting}
\label{prop:cutfine}
 		Let $T\in \mathcal{T}_{k+1}$ be a tree rooted at a vertex $R$ and let $\ell\in \mathbb{N}, \ell<k$. Then the rooted tree $(T,R)$ has an $\ell$-fine partition.
 \end{proposition}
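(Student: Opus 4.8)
Since this is a structural decomposition lemma about the tree alone (the host graph plays no role), the plan is to reduce it to a pure cutting problem and then solve that by a single bottom‑up sweep. Fix the bipartition $(V_1,V_2)$ of $T$ with $R\in V_1$, and decide in advance that the seed set $W:=W_A\cup W_B$ will satisfy $W_A\subseteq V_1$ and $W_B\subseteq V_2$; since in a tree $\dist_T(u,v)$ is odd exactly when $u$ and $v$ lie in different classes, property~\eqref{parity} is then automatic, and putting $R$ into $W$ (necessarily into $W_A$) gives \eqref{root}. Take $\mathcal D_A\cup\mathcal D_B$ to be the family of connected components of $T-W$. Two such components are never $T$‑adjacent, so \eqref{cut:precede} holds automatically; and if a component $T^*$ is $T$‑adjacent to seeds $z_1,z_2$, then, $T$ being a tree, the $z_1$–$z_2$ path must pass through $T^*$, so $\dist_T(z_1,z_2)=2+\dist_{T^*}(a_1,a_2)$ where $a_i$ is the $T^*$‑neighbour of $z_i$; in particular $z_1,z_2$ have the same colour iff $\dist_{T^*}(a_1,a_2)$ is even. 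Consequently, if $W$ is chosen so that $R\in W$ and every component $T^*$ of $T-W$ (i) has at most $\ell$ vertices, (ii) is $T$‑adjacent to at most two seeds, and (iii) when adjacent to exactly two, their $T^*$‑neighbours lie at even distance at least $4$ inside $T^*$, then placing each component into $\mathcal D_A$ or $\mathcal D_B$ according to the colour class of the seed(s) it touches — well defined by (iii) — yields a quadruple for which \eqref{decompose}, \eqref{root}, \eqref{parity}, \eqref{small}, \eqref{nice}, \eqref{cut:precede}, \eqref{2seeds} and \eqref{short} all hold, and \eqref{few} reduces to the requirement $|W|\le 336k/\ell$.

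To build such a $W$, I would process the vertices of $T$ in order of non‑increasing depth, carrying along, for each processed vertex $v$ not yet chosen as a seed, a pending subtree $P_v$ rooted at $v$ (the part of the subtree of $T$ at $v$ not yet cut off), together with the record of which seeds $P_v$ is already $T$‑adjacent to from below, maintaining the invariant that this is at most one. When $v$ is processed with children $c_1,\dots,c_m$, form the tentative pending subtree $\{v\}\cup\bigcup_{c_i\notin W}P_{c_i}$: if it has at most $\ell$ vertices and is adjacent to at most one seed, keep it as $P_v$; otherwise put $v$ into $W$ (into $W_A$ or $W_B$ according to its colour), turn each surviving $P_{c_i}$ ($c_i\notin W$) into a finished component, and set $P_v:=\emptyset$. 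At the end, if $R$ was never chosen, force it into $W_A$ and finish the pending subtrees of its non‑seed children. A finished component always has at most $\ell$ vertices and, by the invariant, is $T$‑adjacent to at most the one seed it already touched from below plus the seed just placed above it, giving (i) and (ii). A standard charging argument bounds $|W|$: each ``size cut'' resolves more than $\ell$ previously unresolved vertices, so there are fewer than $(k+1)/\ell$ of them, and each ``second‑seed cut'' can be charged against an auxiliary forest on the seeds in which it has at least two children, so the total is still $O(k/\ell)$.

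The delicate point — and the reason for the constant $336$ in \eqref{few} rather than something close to $1$ — is arranging (iii) together with the parity that makes the $\mathcal D_A/\mathcal D_B$ assignment consistent: the sweep as described controls the \emph{size} of a component but not the parity of the distance from it to the seed it touches, nor whether that seed sits too close to the top of the component. I would repair this by refining the cutting rule so that one cuts not at the first vertex whose pending subtree exceeds $\ell$ but at a vertex whose pending subtree has size in a window such as $[\ell/C,\ell]$ for a suitable constant $C$, choosing within the window so that the resulting component's two attachment points are at even distance, and, whenever a component would still end up adjacent to a second seed closer than $6$, simply adding the short ($\le 5$‑vertex) connecting path to $W$, which changes $|W|$ only by a constant factor. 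Carrying all of these invariants simultaneously — the sizes, the at‑most‑two‑seeds property, the parities, and the pairwise seed distances — while keeping $|W|=O(k/\ell)$ is the real obstacle; once the seed set is in hand, the verification of the remaining properties of Definition~\ref{def:partition} for $(W_A,W_B,\mathcal D_A,\mathcal D_B)$ is routine.
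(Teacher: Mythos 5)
First, a point of comparison: the paper does not prove Proposition~\ref{prop:cutting} at all --- it imports it verbatim from \cite{Hladkyn}, so there is no internal proof to measure your attempt against. Judged on its own terms, your first paragraph is a correct reduction: taking $W_A\subseteq V_1$, $W_B\subseteq V_2$ makes \eqref{parity} automatic, components of $T-W$ automatically satisfy \eqref{cut:precede}, and your translation of \eqref{short} and of the consistency needed for \eqref{nice} into ``the two attachment points lie at even distance at least $4$ inside $T^*$'' is right. The bottom-up sweep in your second paragraph is the standard way to get the \emph{size} and \emph{seed-count} properties together with $|W|=O(k/\ell)$, and that part is sound.

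The genuine gap is the one you yourself flag in the last paragraph, and it is not a technicality: the sweep gives no control over the colour classes of the two seeds attached to a component (needed so that the $\mathcal D_A/\mathcal D_B$ assignment is well defined and \eqref{nice} holds) nor over their mutual distance (property \eqref{short}), and the two repairs you propose do not work as stated. Choosing the cut vertex ``within a window $[\ell/C,\ell]$ so that the attachment points are at even distance'' is underspecified: the lower attachment point is fixed by a seed placed earlier, and moving the upper cut up or down changes which vertices belong to which pending subtree (possibly merging it with siblings' pending subtrees at the parent), and in a general tree there need not be any position in the window realising the required parity simultaneously for a component that already carries a seed from below. Worse, the fallback of ``adding the short connecting path to $W$'' cascades: the path vertices may have further children, whose subtrees become new components attached to the newly created seeds, and these can again sit at distance less than $6$ from seeds created earlier; one must argue that this process terminates, keeps $|W|=O(k/\ell)$, and does not destroy the at-most-two-seeds property for the freshly created components. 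Handling all of these invariants simultaneously is precisely the content of the cited proof (and the origin of the constant $336$); as written, your argument establishes only the weaker decomposition with properties \eqref{decompose}, \eqref{root}, \eqref{few}, \eqref{small}, \eqref{cut:precede} and \eqref{2seeds}, not an $\ell$-fine partition.
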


 Finally, we state two propositions that will be proved in Sections~\ref{sec:comb} and~\ref{sec:main_prop}, respectively. The first proposition says that every LKS-graph contains one the four configurations, while the second proposition asserts that occurrence of these configurations implies containment of a given tree.
 Note that the first proposition is concerned only with the structure of the cluster graph, not the underlying graph, and could be stated in terms of weighted graphs instead.
 
 
 
 \begin{proposition}\label{prop:comb}
   Let $H$ be a $r'$-skew LKS-graph $\mathbf H$ with parameters $(k, \eta, \cdot, \cdot)$ and let $\mathbf{H}$ be the corresponding cluster graph.
    We denote by $\mathcal L$ and $\mathcal S$, respectively, its set of $L$-clusters and $S$-clusters, respectively. 
   	For any numbers $a_1,a_2,b_1,b_2\in \mathbb N_0$ with $a_2+b_1= \tr k$, $\tr \le r'$, there is a matching~$\mathbf M$ in~$\mathbf H[\mathcal L,\mathcal S]$ and two adjacent clusters $X, Y\in V(\mathbf H)$ such that, setting 
   	 $\mathcal S_M=\mathcal S\cap V(\mathbf M)$ and $\mathcal S_1=\{Z\in \mathcal S\::\: \avdeg(Z)\ge (\tr+r'\eta)  k\}\setminus \mathcal{S_M}$, one of the four following configurations  occurs.
  	\begin{itemize}
  		\item [A)] $\avdeg(X, \mathcal S_1\cup \mathcal S_M)\ge a_2 \cdot(1-\tr) / \tr + \eta k/4$, and $\avdeg(Y, \mathcal L)\ge \tr k + \eta k/4$,
  		\item [B)] $\tr a_1 > (1-\tr) a_2$, $\avdeg(X, \mathcal  S_1\cup \mathcal S_M\cup \mathcal L)\ge k + \eta k/4$ and $\avdeg(Y, \mathcal  L)\ge  \tr k + \eta r' k/4$,
  		\item [C)] $\tr a_1 \le  (1-\tr) a_2$, $\avdeg(X,\mathcal  S_1\cup \mathcal S_M\cup \mathcal L)\ge k + \eta k/4$ and $\avdeg(Y, \mathcal  L)\ge  b_1+\eta r' k/4$,
   		\item [D)] $\tr a_1 \geq (1-\tr) a_2$, $b_1\le \tr^2 k / (1-\tr)$, $\avdeg(X, \mathcal  S_M\cup \mathcal L)\ge k + \eta k/4$ and $\avdeg(Y, \mathcal  L)\ge b_1 + \eta k/4$, and moreover, the neighbourhood of $X$ does not contain both endpoints of any edge from $\mathbf M$. 
  	\end{itemize}
  \end{proposition}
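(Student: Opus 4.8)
Everything takes place in the weighted cluster graph $\mathbf H$; only the defining properties (1)--(6) of an $r'$-skew LKS-graph are used. The plan is to let $\mathbf M$ be a maximum matching of the bipartite graph $\mathbf H[\mathcal L,\mathcal S]$ --- subject, in addition, to a secondary optimality condition (among maximum matchings, one minimising the number of matching edges both of whose endpoints lie in the neighbourhood of a common cluster) that will be responsible for the last clause of configuration~D. Write $\mathcal L_M=\mathcal L\cap V(\mathbf M)$, $\mathcal L_0=\mathcal L\setminus\mathcal L_M$ and $\mathcal S_0=\mathcal S\setminus\mathcal S_M$. Two observations drive the argument. First, since $\mathbf M$ is a maximum matching of a bipartite graph, no edge of $\mathbf H$ joins $\mathcal L_0$ to $\mathcal S_0$ (it would be an augmenting edge); as $S$-clusters are pairwise non-adjacent, every $X\in\mathcal L_0$ thus has all its $S$-neighbours in $\mathcal S_M$, whence
\[
  \avdeg(X,\mathcal S_M\cup\mathcal L)=\avdeg(X)\ge(1+\eta)k
\]
by property~(6). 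Second, $\mathbf M$ matches $L$-clusters to $S$-clusters and $m_L\ge(1+\eta)m_S$, so $|\mathbf M|\le m_S<m_L$ and hence $\mathcal L_0\neq\emptyset$. In particular, in each of configurations~B, C, D the clause imposed on~$X$ --- which only requires $\avdeg(X,\mathcal Z)\ge k+\eta k/4$ for some $\mathcal Z\supseteq\mathcal S_M\cup\mathcal L$ --- is met for free by any $X\in\mathcal L_0$, so the substance in those cases is to produce an adjacent cluster $Y$ with the prescribed value of $\avdeg(Y,\mathcal L)$ (plus, in case~D, the matching-avoidance property). The slacks $\eta k/4$ and $\eta r'k/4$ will absorb all lower-order losses.

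Call a cluster $Y$ \emph{$\mathcal L$-heavy} if $\avdeg(Y,\mathcal L)\ge\tr k+\eta k/4$. If some $X\in\mathcal L_0$ has an $\mathcal L$-heavy neighbour $Y$, we are done: since $0\le b_1\le a_2+b_1=\tr k$, the cluster $Y$ then also satisfies $\avdeg(Y,\mathcal L)\ge\tr k+\eta r'k/4$ and $\avdeg(Y,\mathcal L)\ge b_1+\eta k/4$, and, using $\avdeg(X,\mathcal S_1\cup\mathcal S_M\cup\mathcal L)=\avdeg(X)\ge(1+\eta)k$, the pair $(X,Y)$ realises configuration~B if $\tr a_1>(1-\tr)a_2$ and configuration~C otherwise. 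So from now on assume that no cluster of $\mathcal L_0$ has an $\mathcal L$-heavy neighbour.

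In this remaining case I would exploit that for $X\in\mathcal L_0$ the degree splits as $\avdeg(X)=\avdeg(X,\mathcal L)+\avdeg(X,\mathcal S_M)\ge(1+\eta)k$, with the $\mathcal L$-part supported on ``light'' clusters and the $\mathcal S$-part supported on matched $S$-clusters, and then use the density-reversal identity $r'\avdeg(L_i,S_j)=(1-r')\avdeg(S_j,L_i)$ (with $r'\ge\tr$), which turns $\mathcal S_M$-weight of an $L$-cluster into $\mathcal L$-weight on the matched side --- this is the mechanism producing the connector $Y$. The sub-dichotomy I expect to use is whether $b_1\le\tr^2k/(1-\tr)$, i.e.\ whether $a_2=\tr k-b_1$ is near its maximum $\tr k$. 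If $b_1$ is small in this sense \emph{and} $\tr a_1\ge(1-\tr)a_2$, one aims for configuration~D: take $X\in\mathcal L_0$ (so $\avdeg(X,\mathcal S_M\cup\mathcal L)=\avdeg(X)\ge k+\eta k/4$ for free), find a neighbour $Y$ with $\avdeg(Y,\mathcal L)\ge b_1+\eta k/4$ --- a low threshold, reached along the matching through the reversal identity --- and use the secondary optimality of $\mathbf M$ (together with the remark that an $S$-cluster chosen as $X$ has $N(X)\subseteq\mathcal L$ and thus automatically avoids the $S$-endpoints of all matching edges) to secure the matching-avoidance clause. Otherwise $a_2$ is bounded away from $\tr k$, hence the target $a_2(1-\tr)/\tr$ is bounded away from $k$, and one aims for configuration~A (falling back to~C when $\tr a_1\le(1-\tr)a_2$): one shows that some $L$-cluster $X$ --- either a cluster of $\mathcal L_0$ with large $\avdeg(X,\mathcal S_M)$, or a matched $L$-cluster whose partner and $\mathcal S_1$-neighbours carry the weight --- has $\avdeg(X,\mathcal S_1\cup\mathcal S_M)\ge a_2(1-\tr)/\tr+\eta k/4$ and is adjacent to an $\mathcal L$-heavy cluster $Y$; existence of such $X,Y$ is forced by the total-degree bound on $\mathcal L_0$, the lightness of the $\mathcal L$-neighbourhoods of $\mathcal L_0$, and the reversal identity.

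The main obstacle is exactly the bookkeeping in this last case: one must verify that the constraints tying $a_1,a_2,b_1$ together --- above all $a_2+b_1=\tr k$ and the positions of $\tr a_1$ relative to $(1-\tr)a_2$ and of $b_1$ relative to $\tr^2k/(1-\tr)$ --- combined with $m_L\ge(1+\eta)m_S$ and the reversal identity, leave no gap between the regime that yields configuration~A and the one that yields configuration~D, and that inside configuration~D the matching can be locally rearranged so that the neighbourhood of the chosen cluster~$X$ meets no matching edge in both endpoints without spoiling any degree inequality (making the secondary optimality condition precise is part of this). The easy branch, and all estimates swallowed by the $\eta k/4$ and $\eta r'k/4$ slacks, are routine.
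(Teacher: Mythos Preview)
Your plan shares the correct high-level case split (on $\tr a_1$ versus $(1-\tr)a_2$ and on $b_1$ versus $\tr^2k/(1-\tr)$) and correctly isolates the reversal identity as central. The easy branch---an unmatched $L$-cluster with an $\mathcal L$-heavy neighbour yields B or C---is essentially the paper's Claim~\ref{comb:LAprops1}. But two structural choices differ enough that your sketch, as it stands, has a real gap.

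\textbf{The matching.} The paper does \emph{not} take a maximum matching. It takes $\mathbf M$ minimising the number of unmatched $S$-clusters of \emph{low} degree, i.e.\ minimising $|\mathcal S_0|$ where $\mathcal S_0=\{Z\in\mathcal S\setminus\mathcal S_M:\avdeg(Z)<(\tr+r'\eta/2)k\}$. This buys the key fact that every matched $S$-cluster reachable from $\mathcal S_0$ by an alternating path also has low degree. That fact is cashed in at the final contradiction (Corollary~\ref{comb:A_is_empty}), where one bounds $\vec w(\mathcal N,\mathcal S_0)\le\frac{1-r'}{r'}|\mathcal S_0|(\tr+r'\eta/2)k$; with a maximum matching the unmatched $S$-clusters are an uncontrolled mix of high- and low-degree ones and this inequality fails.

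\textbf{The alternating-path decomposition.} The paper does not work with $\mathcal L_0$ alone but with the larger set $\mathcal L_A=\mathcal L\setminus\mathcal L_B$, where $\mathcal L_B$ consists of matched $L$-clusters reachable from $\mathcal S_0$ by $\mathbf M$-alternating paths. Then $\mathcal L_A$ has no edges to $\mathcal S_0\cup\mathcal S_B$, and---assuming no configuration occurs---also none to $\mathcal L^*\cup\mathcal S_1$ (Corollary~\ref{comb:LAprops2}). This is precisely what gives the matching-avoidance clause of Configuration~D for free: the candidate $X$ lies in $\mathcal L_A^*$, so its $S$-neighbours are in $\mathcal S_A$ (matched into $\mathcal L_A$) while its $L$-neighbours are in $\mathcal N\subseteq\mathcal L_B$ (matched into $\mathcal S_B$); hence $N(X)$ never hits both ends of a matching edge. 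Your ``secondary optimality condition'' is not made precise and does not obviously deliver this; the parenthetical about choosing $X$ to be an $S$-cluster does not help either, since such an $X$ would need $\avdeg(X,\mathcal L)\ge k+\eta k/4$, which $S$-clusters generally lack.

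What remains after this setup is a genuine double-counting argument (Lemma~\ref{comb:counting_lemma} and Corollary~\ref{comb:A_is_empty}): assuming no configuration occurs, one threads the degree bounds on $\mathcal L_A^*$, $\mathcal L_A^+$, $\mathcal S_A$ and $\mathcal N$ through the reversal identity to force $\mathcal L_A=\emptyset$, contradicting $|\mathcal L|>|\mathcal S|$. This is where both the matching choice and the $\mathcal A/\mathcal B$ split are used, and it is more than bookkeeping---it is the missing idea in your hard case.
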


\begin{proposition}
\label{prop:mainembedding2}
For each $\delta, q, d>0$ and $\tr,r'\in \mathbb Q^+$ with $\tr\le r'\le 1/2$ there is $\varepsilon=\varepsilon(\delta, q, d, r')>0$ such that for any $\tilde N_{max} \in \mathbb N$ there is a $ \beta=\beta(\delta, q, r',\varepsilon, \tilde N_{max})>0$ and an $n_0=n_0(\delta, q,\tr,\beta)\in \mathbb{N}$ such that for any $n\ge n_0$ and $k\ge qn$ the following holds.
Let $\mathcal D=(W_A,W_B,\mathcal D_A,\mathcal D_B)$ be an 	$\beta k$-fine partition of a tree $T\in \mathcal T_k$ with colour classes~$T_1$ and~$T_2$ such that $|T_1|= \tr k$.
Let~$H$ be an $r'$-skewed LKS-graph of order~$n$,
with parameters $(k,\delta, \varepsilon, d)$, let~$\mathbf H$ be its corresponding cluster graph with $|V(\mathbf H)|\le \tilde N_{max}$ and $\mathcal{L},\mathcal{S}\subseteq V(\mathbf{H})$ are sets of $L$-clusters and $S$-clusters, respectively.
Let~$\mathbf M$ be a matching in~$\mathbf H$, let $\mathcal{S}_M=\mathcal{S}\cap V(\mathbf M)$, $\mathcal{S}_1:=\{C\in \fS \setminus V(\mathbf M)\::\ \avdeg(C)\ge (1+\delta) \tr k\}$. Let $A$ and $B$ be two clusters of $\mathbf{H}$ such that $AB\in E(\mathbf{H})$ and  one of the following holds.
\begin{enumerate}[label={\Alph*)}]
	\item\label{it:p11a} $\avdeg(A,\mathcal{S}_1\cup \mathcal{S}_M)\ge a_2 \frac{1-\tr}{\tr} + \delta k$ and $\avdeg(B,\mathcal{L})\ge (\tr+\delta)k$ 
	\item\label{it:p11b} $\tr |V(\mathcal{D}_A)\cap V(T_2)| \ge (1-\tr) |V(\mathcal{D}_A)\cap V(T_1)|$,\\ $\avdeg(A,\mathcal{S}_1\cup \mathcal{S}_M\cup \mathcal{L})\ge (1+\delta)k$, and $\avdeg(B, \mathcal{L})\ge (\tr+\delta)k$,
	\item\label{it:p11c} $\tr |V(\mathcal{D}_A)\cap V(T_2)| \le (1-\tr) |V(\mathcal{D}_A)\cap V(T_1)|$,\\ $\avdeg(A,\mathcal{S}_1\cup \mathcal{S}_M\cup \mathcal{L})\ge (1+\delta)k$, and $\avdeg(B, \mathcal{L})\ge |V(\mathcal{D}_B)\cap V(T_1)|+\delta k$,
	\item\label{it:p11d} $\tr |V(\mathcal{D}_A)\cap V(T_2)| \ge (1-\tr) |V(\mathcal{D}_A)\cap V(T_1)|$, $|V(\mathcal{D}_B)\cap V(T_1)|\le \frac{\tr^2}{(1-\tr)}k$\\
	$\avdeg(A,\mathcal{S}_M\cup \mathcal{L})\ge (1+\delta)k$, $\avdeg(B, \mathcal{L})\ge |V(\mathcal{D}_B)\cap V(T_1)| +\delta k$, and  moreover, the neighbourhood of $A$ does not contain both endpoints of any edge from~$\mathbf M$. 
\end{enumerate}
Then~$T\subseteq H$. 
\end{proposition}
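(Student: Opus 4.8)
\textbf{Proof proposal for Proposition~\ref{prop:mainembedding2}.}

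The plan is to embed~$T$ into~$H$ using the $\beta k$-fine partition $\mathcal D=(W_A,W_B,\mathcal D_A,\mathcal D_B)$ in two stages: first embed the skeleton vertices $W_A\cup W_B$, then embed the small subtrees in $\mathcal D_A\cup\mathcal D_B$ into regular pairs hanging off the already-placed skeleton, using Lemma~\ref{lem:tree-emb}. I will first fix the quantification: choose $\varepsilon$ small relative to $\delta,q,d,r'$ (so that the regularity slack terms like $2\sqrt\varepsilon n/r$ from Proposition~\ref{prop:degrutratypical} are negligible compared to $\delta k\ge \delta q n$); then, given $\tilde N_{max}$, choose $\beta$ small enough that $336k/(\beta k)=336/\beta$ skeleton vertices plus the subtree vertices $V(T^*)\le \beta k$ fit comfortably inside clusters of size roughly $n/|V(\mathbf H)|\ge n/\tilde N_{max}$ — concretely, $\beta$ should be small compared to $\varepsilon/\tilde N_{max}$ so each $L$-cluster can host $|W_A|$ skeleton images and still have $\ge\varepsilon|L|$ room, and so each subtree has order $\le\varepsilon|L|$ as required by Lemma~\ref{lem:tree-emb}. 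Throughout I keep track of a ``load'' on each cluster: the number of vertices of $T$ already embedded there, and I only ever use ultratypical vertices (Lemma~\ref{lem:ultratypical}) so that degree bounds from $\mathbf H$ transfer to $H$ up to the small error in Proposition~\ref{prop:degrutratypical}(2).

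\medskip

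The heart of the argument is a unified embedding scheme driven by the matching $\mathbf M$ and the pair $A,B$. The idea (following \cite{PS07+}): walk along $T$ rooted appropriately so that $W_A$-vertices and $W_B$-vertices alternate at odd distances (property~\ref{parity}); map $W_A$-skeleton vertices into one side and $W_B$-skeleton vertices into the other side of the clusters, routing the connecting paths of $T$ (the odd-length segments between consecutive skeleton vertices) through a matching edge, alternating the two endpoints of that edge along the path. Because each matching edge $\{P,Q\}\in\mathbf M$ with $P\in\mathcal S_M$ has $\avdeg$ bounded below and $P$ is an $S$-cluster (small), it can absorb many path-vertices of the minority colour $T_1$; this is exactly why the $\mathcal S_M$ clusters matter. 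The subtrees $T^*\in\mathcal D_A$ (respectively $\mathcal D_B$) attach to at most two skeleton vertices in $W_A\cup W_B$ (properties~\ref{2seeds},~\ref{short}), which by property~\ref{nice} all lie in $W_A$ (resp.\ $W_B$); I embed $T^*$ by Lemma~\ref{lem:tree-emb} into a regular pair $(C,D)$ where $C$ contains the already-embedded seed(s) — the seeds are ultratypical so they have large degree into $D$, meeting the hypothesis of Lemma~\ref{lem:tree-emb} on $\varphi(R)$. The four cases \ref{it:p11a}--\ref{it:p11d} dictate which clusters play the role of $C,D$: in case~\ref{it:p11a}, subtrees of $\mathcal D_A$ use the $\mathcal S_1\cup\mathcal S_M$-neighbourhood of $A$ and subtrees of $\mathcal D_B$ use the $\mathcal L$-neighbourhood of $B$; in cases~\ref{it:p11b},~\ref{it:p11c} the degree condition $\avdeg(A,\cdot)\ge(1+\delta)k$ lets $A$ host \emph{all} of $\mathcal D_A$ directly (the colour-balance inequality in~\ref{it:p11b}/\ref{it:p11c} is what guarantees the majority side of the subtrees fits into the large $\mathcal L$-part while the minority side goes into $\mathcal S_M\cup\mathcal S_1$); case~\ref{it:p11d} is the same but with the extra ``$A$'s neighbourhood avoids matching edges'' clause forcing us to use $\mathcal S_M$ more carefully so that no cluster is overloaded.

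\medskip

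The key counting: for each cluster $C$ I must certify that the number of $T$-vertices I route into $C$ is at most $(1-\beta')|C|$. For $L$-clusters hosting subtrees of $\mathcal D_A$ via $A$, the total demand is $\sum_{T^*\in\mathcal D_A}|V(T^*)\cap(\text{side})|$, which is at most $|V(\mathcal D_A)\cap V(T_j)|$ for the appropriate $j$; spreading this over the neighbourhood certified by $\avdeg(A,\cdot)\ge\ldots$ and using Proposition~\ref{prop:degrutratypical}(2) to convert average cluster-degree into a guaranteed number of available clusters/vertices, one gets a feasible fractional assignment, which I then round greedily (the $\delta k$ slack absorbs rounding and the $2\sqrt\varepsilon n/r$ error). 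The precise arithmetic that each of the four numerical hypotheses is exactly what is needed — e.g.\ that $a_2\frac{1-\tr}{\tr}$ in~\ref{it:p11a} equals the worst-case $\mathcal S$-side demand — is bookkeeping I would spell out case by case but not belabour here.

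\medskip

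\emph{Main obstacle.} The hard part will be the simultaneous bookkeeping of cluster loads: the connecting paths, the subtrees, and the skeleton images all compete for space in the same clusters (especially the $\mathcal S_M$-clusters, which are used both for routing minority-colour path vertices and for the minority side of $\mathcal D_A$-subtrees in cases~\ref{it:p11b}--\ref{it:p11d}), and one must show a consistent assignment exists. I expect to handle this by setting up one global flow/fractional-assignment problem per case — nodes are the clusters with capacities $(1-\beta')|C|$, demands are the coloured pieces of the partition, edges are the regular pairs of positive density — and checking a Hall-type / max-flow feasibility condition that reduces precisely to the four $\avdeg$ inequalities. A secondary nuisance is ensuring the parity/distance conditions (\ref{parity},~\ref{short}) are respected when alternating along matching edges, and that the two seeds of a subtree (when there are two) can be placed in a common cluster $C$ with both ultratypical to the same $D$ — handled by the ``$\ge 6$'' distance gap, which leaves enough internal path length to re-route onto a common cluster, mirroring the $|R|=2$ construction inside the proof of Lemma~\ref{lem:tree-emb}.
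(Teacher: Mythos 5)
Your overall architecture matches the paper's: embed $W_A\cup W_B$ into ultratypical vertices of the pair $(A,B)$ via Lemma~\ref{lem:tree-emb}, then attach the small trees of $\mathcal D_A$ to regular pairs reached through $\mathcal S_M$, $\mathcal S_1$ or $\mathcal L$, and the trees of $\mathcal D_B$ through $N(B)\cap\mathcal L$, with the four degree hypotheses certifying capacity. (One small misconception: there are no separate ``connecting paths'' to route through matching edges with alternating endpoints --- in a fine partition the interiors of the paths between skeleton vertices already belong to the trees of $\mathcal D_A\cup\mathcal D_B$, and the whole skeleton is placed inside the single regular pair $(A,B)$.)

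However, the part you defer as ``bookkeeping'' and propose to replace by a per-case static flow/Hall feasibility check is where essentially all of the work lies, and there are three concrete issues your outline does not resolve. First, to embed a component $K$ of $\mathcal D_A$ into a matching edge $(C,D)$ with $C\in\mathcal S$ you need $|K\cap T_1|\le|K\cap T_2|$ \emph{componentwise} (the $S$-cluster is the small side of the pair); this can fail, and the paper fixes it by stripping the $T_1$-leaves of each component (forming the forests $\fF'$, $\fG'$) and embedding the stripped leaves greedily at the very end using the degree at least $k$ of ultratypical $L$-vertices. Second, in cases C and D the hypothesis on $B$ is only $\avdeg(B,\mathcal{L})\ge |V(\mathcal D_B)\cap V(T_1)|+\delta k$, with no slack beyond what $\mathcal D_B$ itself needs; since the $T_2$-part of $\mathcal D_A$ also lands in $\mathcal L$ and may exhaust $N(B)\cap\mathcal L$, one must \emph{reserve} the space for $\mathcal D_B\cap V(T_1)$ (embed $N(W_B)\cap\mathcal D_B$ and set aside a matching reserved set $U'$) \emph{before} touching $\mathcal D_A$, and then carry $U'$ through every subsequent capacity count; in case D this further requires splitting $U'$ according to whether it meets $N_{\mathbf H}(A)$ and invoking the hypothesis that $N(A)$ avoids both endpoints of any matching edge. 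A static assignment does not capture this ordering constraint, and your outline does not mention reservation at all. Third, the claim that the Hall-type condition ``reduces precisely to the four inequalities'' is exactly what must be proved and is not routine: in cases B, C, D one must split $\mathcal D_A$ into sub-forests ordered by \emph{decreasing skew} so that the part sent to $\mathcal L$ inherits the ratio $\tr|\cdot\cap T_2|\ge(1-\tr)|\cdot\cap T_1|$, and the resulting chain of estimates (e.g.\ for $\avdeg(A,\mathcal L)$ in case D) uses this ordering together with the bound $|V(\mathcal D_B)\cap V(T_1)|\le \tr^2k/(1-\tr)$ in an essential way. Until these points are carried out, what you have is a correct strategic outline rather than a proof.
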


\section{Proof of the theorem}\label{sec:proof}
Suppose $r,q$ and $\eta$ are fixed. If $r=1/2$, then set $r':=r\in \mathbb Q$, $s:=1$, and $t:=2$. Otherwise, let~$\rho:=1/2-r>0$ and $r'\in \mathbb Q$ be such that $r\le r'\le r(1+\frac{\eta\rho  q}{12})$ with $r'=s/t$, $s,t\in \mathbb N$ and  $t\le 12/(\eta \rho q r)$. Observe that $r'\le 1/2$. Let $d:= \frac{\eta^2 q^2r'}{100}$.
 Let $\varepsilon =\min\{\frac{\eta d^2q^2}{40},\frac{1}{t} \varepsilon_{P\ref{prop:mainembedding2}}(\frac{\eta r'q}{400},q,d/2,r')\}$. Lemma \ref{lem:reg} (Szemerédi regularity lemma) with input parameter $\varepsilon_{L\ref{lem:reg}}:= \varepsilon$ and $N_{min}:=1/\varepsilon$ outputs $n_R, N_{max}\in \mathbb N$. Set $\beta:= \beta_{P\ref{prop:mainembedding2}}(\frac{\eta r'q}{400}, q,r', t\cdot \varepsilon, tN_{max})$. 
Let $n_0=\max\{ 2 n_R,2t\cdot N_{max}/\varepsilon, n_{0,P\ref{prop:mainembedding2}}(\frac{\eta r'q}{400}, q,r', \beta)\}$  and let $n\ge n_0$.
Suppose $k\ge qn$ is fixed. 
Let~$G$ be any graph on~$n$ vertices that has at least $rn$ vertices of degree at least $(1+\eta)k$.

We first find a subgraph $H$ of $G$ of size $n''\geq (1-\eta q/2)(1-2\varepsilon)n$
which is an $r'$-skew LKS-graph with parameters $(k,\frac{\eta q}{100},  t\cdot \varepsilon, \frac d2)$ and construct the corresponding LKS-cluster graph $\mathbf{H}$.

Erase $\eta \cdot  qn/2$ vertices from the set of vertices that have degree smaller than $(1+\eta)k$ and let~$G'$ be the resulting graph of order $n'=n(1-\eta q/2)$. Observe that for all $v\in V(G')$, we have $\deg_{G'}(v)\ge \deg_{G}(v)-\eta k/2$ and hence at least $rn\ge r'n'(1+\eta q/4)$ vertices of~$G'$ have degree at least~$(1+\eta/2)k$. 

We apply Szemer\'edi regularity lemma (Lemma~\ref{lem:reg})  on~$G'$ and obtain an $\varepsilon$-regular equitable  partition $V(G')= V_0\cup V_1\cup\cdots\cup V_N$. Erase all edges within sets~$V_i$, between irregular pairs, and between pairs of  density lower than $d$. Hence, we erase at most 
$N\cdot \binom{n'/N}{2}\le \varepsilon (n')^2/2$ edges within the sets $V_i$, at most $\varepsilon N^2 \cdot \left(\frac{n'}{N}\right)^2=\varepsilon (n')^2$ edges in irregular pairs, and at most $\binom{N}{2}\cdot d\cdot\left(\frac{n'}{N}\right)^2\le \frac{d}{2}\cdot (n')^2$ edges in pairs of density less than~$d$. In total we have thus erased less than $d\cdot (n')^2=\frac{\eta^2 q^2 r'}{100}\cdot (n')^2$ edges. 
  
Call a set~$V_i$ an $L$-set if the average degree of its vertices is at least $(1+\eta q/4)k$ and otherwise an $S$-set. We have at least $(1+\frac{\eta q}{20})r'N$ $L$-sets. Indeed, during the erasing process, less than $\eta r' q n'/6$ vertices dropped their degree by more than $\eta k/8$. Therefore, now there are at least $(1+\frac{\eta q}{12})r'n'$ vertices of degree at least $(1+3\eta /8)k$. By regularity, in each $S$-set $V_i$ there are at most $\varepsilon |V_i|$ of those vertices, as otherwise they form a subset of~$V_i$ of substantial size and thus the $S$-set $V_i$ would have average degree at least $(1+3\eta /8)k-\varepsilon n'> (1+\eta /4)k$. So we can have at most $\varepsilon n'$ vertices of degree at least 
$(1+3\eta /8)k$ distributed among all $S$-sets and at most $\varepsilon n'$ of them  contained in~$V_0$. Hence, at least $(1+\frac{\eta q}{20})r'n'$  vertices of degree at least $(1+3\eta /8)k$ must be contained in $L$-sets, producing thus at least $(1+\frac{\eta q}{20})r'N$ $L$-sets.

  We subdivide any $L$-set into $t-s$ sets of the same size, which we call \emph{$L$-clusters}, adding at most~$t-s-1$ leftover vertices to the garbage set~$V_0$.
   Similarly, we subdivide any $S$-set into~$s$ sets, which we call \emph{$S$-clusters}. In this way we have $(1-r')|C|=r'|D|$ for any $L$-cluster~$C$, and any $S$-cluster~$D$. By Lemma~\ref{lem:subdivide}, if $(V_i,V_j)$ is $\varepsilon$-regular  and $C\subseteq V_i$ and $D\subseteq V_j$ are~$L$ or~$S$ clusters,  then $(C,D)$, is a $\varepsilon'$-regular pair for $\varepsilon'= t\varepsilon$ with density at least $d':=d-\varepsilon$. 
  Observe that by the choice of~$n_0$, we added in total less than~$t\cdot N\le \varepsilon n'$ vertices to the garbage set~$V_0$.
  We delete at most~$2\varepsilon n'$ vertices of the enlarged set~$V_0$. Any $L$-cluster 
 is a relatively large subset of the $L$-set  it comes from, and thus basically inherits the average degree of the set it comes from. Together with the deletion of the enlarged garbage set, we obtain that each $L$-cluster has now average degree at least $(1+\eta q/4)k-3\varepsilon n'\ge (1+\eta q/5)k$. 
  
   Denote by $m_L$ the number of $L$-clusters and by $m_S$ the number of $S$-clusters. 
   We have $m_L\ge (1+\frac{\eta q}{20})r'N\cdot (t-s)$, as each $L$-set divided in $t-s$ $L$-clusters. Similarly, we obtain $m_S<(1-r') s N$. Therefore,
   \begin{align*}
       m_L&\ge (1+{\eta q}/{100}){m_L}/{2}+(1-{\eta q}/{100})(1+{\eta q}/{20})\cdot r'N\cdot (t-s)/2\\
       &> (1+{\eta q}/{100}){m_L}/{2}+(1+{\eta q}/{100}) \cdot \frac{s}{t}\cdot \frac{m_S}{s(1-s/t)}\cdot (t-s)/2\\
       &= (1+{\eta q}/{100}){m_L}/{2}+(1+{\eta q}/{100})\cdot \frac{m_S}{t-s}\cdot (t-s)/2\\
       &=(1+{\eta q}/{100})(m_L+m_S)/2\;.
    \end{align*}
   
   Finally, we delete all edges between $S$-clusters. We denote by~$L$ the set of vertices contained in $L$-clusters and by~$S$ the set of vertices contained in $S$-clusters. 

Let $H$ be the resulting graph. By construction, it is an $r'$-skew LKS-graph of order~$n''$, where $(1-2\varepsilon)n'\le n''\le n'$, with parameters $(k,\frac{\eta q}{100}, \varepsilon', d/2)$. 
 The vertex set of the corresponding cluster graph $\mathbf{H}$ consists of the $L$- and $S$-clusters defined above, with edges corresponding to $\varepsilon'$-regular pairs of density at least~$d/2$ in~$H$. Observe that $|V(\mathbf H)|\le t\cdot N_{max}$.
 

 After having processed the host graph, we turn our attention to the tree. Let~$T$ be any tree of order~$k$ with colour classes~$T_1$ and~$T_2$ and $|T_1|\le  rk\le r'k$. Pick any vertex $R\in V(T)$ to be the root of~$T$. 
Applying Proposition~\ref{prop:cutting} on~$T$ with parameter $\ell_{P\ref{prop:cutting}}:=\beta k$, we obtain its $\beta k$-fine partition $\mathcal D=(W_A, W_B, \mathcal D_A, \mathcal D_B)$.  Without loss of generality, assume that $W_A\subseteq  V(T_2)$.  Let $\tilde r:= |V(T_1)\setminus W_B|/k$.
 We then apply Proposition~\ref{prop:comb} with $\eta_{P\ref{prop:comb}}:=\eta q/100$, $r'_{P\ref{prop:comb}}:=r'$, $k_{P\ref{prop:comb}}:=k$,   $n_{P\ref{prop:comb}}:=n''$, $H_{P\ref{prop:comb}}:=\mathbf H$, for $vu\in \mathbf{H}$, $a_1:=|V(\mathcal{D}_A)\cap V(T_2)|$, $a_2:= |V(\mathcal{D}_A)\cap V(T_1)|$, $b_1:=|V(\mathcal{D}_B)\cap V(T_1)|$, $b_2:= |V(\mathcal{D}_B)\cap V(T_2)|$, $\tilde r_{P\ref{prop:comb}}:= \tilde r$. We obtain a matching $\mathbf M\subseteq E(\mathbf H)$ and two adjacent clusters $A,B\in V(\mathbf H)$ satisfying one of four configurations.


\marginpar{D: I changed parameters below}
 For any of these four possible configurations,  Proposition~\ref{prop:mainembedding2} with input $\delta_{P\ref{prop:mainembedding2}}:=\frac{\eta r' q}{400}$, $q_{P\ref{prop:mainembedding2}}:= q$, $d_{P\ref{prop:mainembedding2}}:= d/2$, $\varepsilon_{P\ref{prop:mainembedding2}}:=\varepsilon'$, $\tilde N_{max, P\ref{prop:mainembedding2}}:=tN_{max}$,  $H_{P\ref{prop:mainembedding2}}:= H$, $\mathbf H_{P\ref{prop:mainembedding2}}:=\mathbf H$, and further input as in Proposition~\ref{prop:comb},  gives an embedding of~$T$ in $H\subseteq G$, proving Theorem~\ref{thm:result}.


\section{Proof of Proposition \ref{prop:comb}}\label{sec:comb}
We will prove Proposition \ref{prop:comb} in several steps. We start by defining the desired matching $\mathbf M$ as well as several other subsets of $\mathbf H$. 

Let $\mathbf M \subseteq \mathbf  H[\mathcal L,\mathcal S]$ be a matching minimising the number of vertices in the set $\mathcal S_0 := \{ X \in \mathcal S : \avdeg(X) < (\tr+r'\eta/2) k \}$.
It follows that $\mathcal S_1 = \mathcal S \setminus (\mathcal S_M \cup\mathcal  S_0)$.

We define $\mathcal B \subseteq V(\mathbf M)$ as the set of those clusters $X$, for which there is an alternating path $P=X_1X_2\dots X_k$, such that $X_1 \in \mathcal S_0$, $X_k=X$, $X_{2i}\in \mathcal L$, $X_{2i+1} \in \mathcal S_M$, $\{ X_{2i}, X_{2i+1} \} \in \mathbf M$. Also let $\mathcal L_B = \mathcal L \cap \mathcal B$ and $\mathcal S_B = \mathcal S_M \cap \mathcal B$. 
Then we define $\mathcal A=V(\mathbf M)\setminus \mathcal B$, $\mathcal L_A = \mathcal L \setminus\mathcal  L_B$, $\mathcal S_A = \mathcal S_M \setminus \mathcal S_B$. 

\begin{figure}
    \centering
    \includegraphics{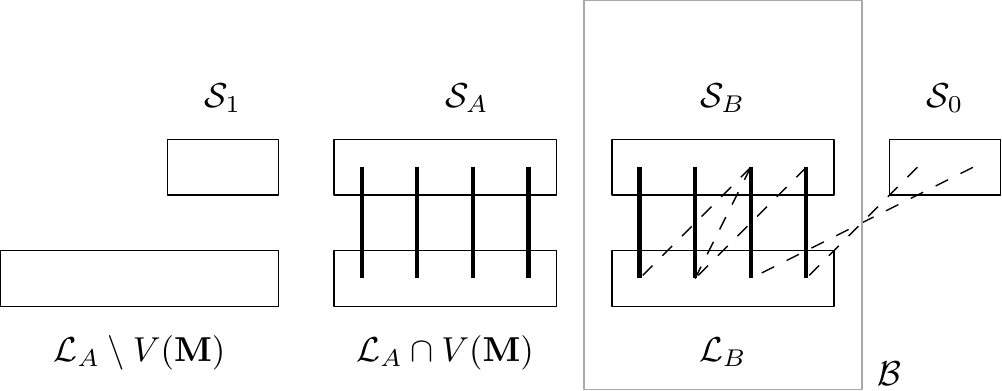}
    \caption{Various subsets of $\mathbf H$ used in the proof of Proposition \ref{prop:comb}. }
    \label{fig:comb}
\end{figure}

\begin{claim}
\label{comb:basic_props}
For all $X\in \mathcal S_B$ we have $\avdeg(X) < (\tr+r'\eta/2) k$. Also, there are no edges between clusters  from $\mathcal L_A$ and $\mathcal S_0 \cup \mathcal S_B$. 
\end{claim}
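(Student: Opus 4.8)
The plan is to prove both assertions directly from the definitions of the sets $\mathcal B$, $\mathcal S_B$, $\mathcal L_A$, and from the minimality of the matching $\mathbf M$.

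First I would establish that every $X\in\mathcal S_B$ satisfies $\avdeg(X) < (\tr+r'\eta/2)k$. By definition of $\mathcal B$, such an $X$ lies in $\mathcal S_M$ and is the endpoint of an alternating path $P = X_1X_2\cdots X_m$ with $X_1\in\mathcal S_0$, $X_m = X\in\mathcal S_M$, the even-indexed clusters in $\mathcal L$, the odd-indexed (for index $\ge 3$) in $\mathcal S_M$, and consecutive $L$--$S$ pairs being matching edges of $\mathbf M$. The key idea is a matching-switching argument: if we had $\avdeg(X)\ge (\tr+r'\eta/2)k$, i.e. $X\notin\mathcal S_0$, then we could augment/switch along $P$ — remove the matching edges $\{X_2,X_3\},\{X_4,X_5\},\dots$ that appear ``backwards'' along $P$ and add the edges $\{X_1,X_2\},\{X_3,X_4\},\dots$ — obtaining a new matching $\mathbf M'$ in $\mathbf H[\mathcal L,\mathcal S]$ of the same size in which $X_1\in\mathcal S_0$ is now matched while $X$ (the only $S$-cluster that leaves $V(\mathbf M)$) is not, and $X\notin\mathcal S_0$. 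Hence $|\mathcal S_0\cap V(\mathbf M')| < |\mathcal S_0\cap V(\mathbf M)|$, contradicting the choice of $\mathbf M$ as minimising $|\mathcal S_0\cap V(\mathbf M)|$. Therefore $X\in\mathcal S_0$, which is exactly $\avdeg(X) < (\tr+r'\eta/2)k$. (One should check the alternating path indeed only passes through $\mathcal L\cup\mathcal S_M$ so that the switch stays within $\mathbf H[\mathcal L,\mathcal S]$ and changes the status of no vertex outside $P$; this is immediate from the definition of $P$.)

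Next I would prove there is no edge between a cluster $C\in\mathcal L_A$ and a cluster $D\in\mathcal S_0\cup\mathcal S_B$. Suppose for contradiction that $CD\in E(\mathbf H)$ with $C\in\mathcal L_A = \mathcal L\setminus\mathcal L_B$ and $D\in\mathcal S_0\cup\mathcal S_B$. Consider two cases. If $C\notin V(\mathbf M)$, then $C$ is unmatched and $D$ is reachable by a suitable alternating path from $\mathcal S_0$ (if $D\in\mathcal S_0$ the trivial one-vertex path; if $D\in\mathcal S_B$ the path witnessing $D\in\mathcal B$, which ends at the $S$-cluster $D$), so switching along that path and then adding the edge $CD$ yields a matching covering one more vertex of $\mathcal S_0$ — contradiction with minimality. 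If instead $C\in V(\mathbf M)$, then since $C\notin\mathcal L_B$ we have $C\in\mathcal L_A$ is matched to some $S$-cluster $C'\in\mathcal S_A$; but then the edge $CD$ together with the matching edge $\{C,C'\}$ extends the alternating structure: $D$ (if in $\mathcal S_0$, directly; if in $\mathcal S_B$, via its witnessing path) reaches $C$ and hence $C'$ along an alternating path through $\mathcal L\cup\mathcal S_M$, forcing $C\in\mathcal L_B$ and $C'\in\mathcal S_B$, contradicting $C\in\mathcal L_A$. In all cases we reach a contradiction, so no such edge exists.

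The main obstacle is getting the alternating-path bookkeeping exactly right — in particular verifying that the switch along $P$ genuinely produces a valid matching in $\mathbf H[\mathcal L,\mathcal S]$ (all added edges are present in $\mathbf H$, no vertex is covered twice) and that the only vertex of $\mathcal S_M$ that becomes uncovered is the terminal cluster $X$, while the only newly covered vertex of $\mathcal S$ is $X_1\in\mathcal S_0$; everything else is a direct unwinding of the definitions. Once the switching lemma is cleanly stated, both parts of the claim follow as above.
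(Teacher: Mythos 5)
Your proof follows the same route as the paper's: the first assertion via the symmetric difference of $\mathbf M$ with the witnessing alternating path, the second by extending alternating paths to force the $L$-cluster into $\mathcal B$ (your extra case distinction for an unmatched $C\in\mathcal L_A$, handled by an augmenting path, is a detail the paper's one-line proof glosses over but does need). There is, however, one concrete slip in your first part: the switch along $P$ newly covers $X_1\in\mathcal S_0$ and uncovers $X\notin\mathcal S_0$, so $|\mathcal S_0\cap V(\mathbf M')|$ \emph{increases}; the quantity that strictly decreases is $|\mathcal S_0\setminus V(\mathbf M')|$, the number of $\mathcal S_0$-clusters left uncovered. That is the reading of ``minimising the number of vertices in $\mathcal S_0$'' under which the extremal choice of $\mathbf M$ is meaningful at all (the set $\mathcal S_0$ itself does not depend on $\mathbf M$), and it is the reading you yourself invoke in the second part when you say the modified matching ``covers one more vertex of $\mathcal S_0$''—so your write-up is internally inconsistent and the stated contradiction in part one is with the wrong quantity. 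The fix is only to restate the extremal property as minimising $|\mathcal S_0\setminus V(\mathbf M)|$ throughout. One further point worth making explicit (shared with the paper's own definition of $\mathcal B$): for the symmetric difference to be a matching, the initial cluster $X_1$ of the witnessing path must itself be uncovered by $\mathbf M$, so witnessing paths should be taken to start at uncovered clusters of $\mathcal S_0$.
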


\begin{proof}
If the first statement was not true, the symmetric difference of $\mathbf M$ and an alternating path between $X$ and a vertex in $\mathcal S_0$ would yield a matching contradicting the choice of~$\mathbf M$ as a matching minimising the size of $\mathcal S_0$.

If the second statement was not true, we would have an alternating path ending at $X$ which is a contradiction with the definition of $\mathcal L_A$. 
\end{proof}

Now we are going to define yet another subsets of $\mathcal L$ based on the average degrees of the clusters. 
$$\mathcal L^* := \{ X \in \mathcal L : \avdeg(X, \mathcal L) \geq (\tr+r'\eta/2) k\},$$
$$\mathcal L^+ := \{X \in \mathcal L \setminus \fL^* : \avdeg(X, \mathcal S_M \cup \mathcal S_1) \ge (1-\tr+\eta /2 )k\}.$$
Next, we define $\mathcal L_A^* := \mathcal L^* \cap \mathcal L_A$ and $\mathcal L_A^+ := \mathcal L^+ \cap \mathcal L_A$. We have $\mathcal L_A^*=\mathcal L_A\setminus \mathcal L_A^+$ by Claim~\ref{comb:basic_props}.
We define $\mathcal L_B^+$ and $\mathcal L_B^*$ in a similar way. 
Finally, let $$\mathcal N=N(\mathcal L_A^*) \cap \mathcal L.$$

Now suppose that none of the four configurations from statement of the theorem occurs in the cluster graph $\mathbf H$. We are going to gradually constrain the structure of $\mathbf H$ until we find a contradiction.

\begin{claim}
\label{comb:LAprops1}
Let $X$ and $Y$ be two clusters such that $X \in \mathcal L$ and $\avdeg(X, \mathcal S_0) = 0$ and $\avdeg(Y, \mathcal L) \geq (\tr+r'\eta/2) k$. Then $X$ and $Y$ are not connected by an edge. \end{claim}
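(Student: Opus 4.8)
## Proof plan for Claim~\ref{comb:LAprops1}

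The plan is to argue by contradiction, using the matching-optimality of $\mathbf M$ together with the ``reachability'' structure encoded by $\mathcal B$. Suppose $XY \in E(\mathbf H)$ with $X \in \mathcal L$, $\avdeg(X,\mathcal S_0)=0$, and $\avdeg(Y,\mathcal L)\ge (\tr + r'\eta/2)k$. The first observation is that $Y \in \mathcal L^*$ by the very definition of $\mathcal L^*$, since $\avdeg(Y,\mathcal L)\ge(\tr+r'\eta/2)k$ forces $Y$ to be an $L$-cluster with high enough average degree into $\mathcal L$ (note $Y$ cannot be an $S$-cluster, as $S$-clusters have no neighbours in $\mathcal S$, but $\avdeg(Y,\mathcal L)$ being large is consistent only with $Y\in\mathcal L$; I would spell out why $Y\notin\mathcal S$ here, or rather observe that $\avdeg(Y,\mathcal L)>0$ already forces $Y\in\mathcal L$ since $S$--$S$ edges are absent and we are measuring degree into $\mathcal L$, which is fine for an $S$-cluster — so actually the cleaner route is: if $Y\in\mathcal S$ then $Y\in\mathcal S_1$ or $Y\in\mathcal S_M$, and handle that; but the intended reading is surely $Y\in\mathcal L^*$, and I would confirm from context whether $\mathcal N$ being defined as $N(\mathcal L_A^*)\cap\mathcal L$ signals that the claim is really about $X,Y\in\mathcal L$).

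Assuming $X,Y\in\mathcal L$, the heart of the argument is: I want to show $X$ is not reachable from $\mathcal S_0$ by an $\mathbf M$-alternating path, i.e.\ $X\in\mathcal L_A$, and in fact $X\in\mathcal L_A^*$ or leads to a contradiction. The condition $\avdeg(X,\mathcal S_0)=0$ says $X$ has no edge to any cluster of $\mathcal S_0$. Now suppose $X$ is matched in $\mathbf M$ (if it is unmatched, a short separate argument applies). Consider the chain of reasoning: were $X\in\mathcal L_B$, then by definition of $\mathcal B$ there is an alternating path from $\mathcal S_0$ to $X$, whose last edge is the matching edge $\{X',X\}$ with $X'\in\mathcal S_M$; but then, since $X$ also sees $Y$, and $Y\in\mathcal L^*\subseteq\mathcal L$... here I need to convert ``$X$ reachable, $X$ adjacent to high-$\mathcal L$-degree $Y$'' into a contradiction with minimality of $|\mathcal S_0\setminus\mathcal S_M|$ (or whatever the minimised quantity is) or with a later structural claim. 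The mechanism should be exactly the augmenting/alternating-path trick already used in Claim~\ref{comb:basic_props}: an alternating path reaching $X$, extended through the edge $XY$ and then — using that $Y$ has many $\mathcal L$-neighbours — rerouted to liberate a vertex of $\mathcal S_0$, contradicting the choice of $\mathbf M$.

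Concretely, the key steps in order: (1) observe $Y\in\mathcal L^*$ (hence $Y\in\mathcal L$) from the hypothesis on $\avdeg(Y,\mathcal L)$; (2) assume for contradiction $XY\in E(\mathbf H)$; (3) show that $X$ cannot be in $\mathcal L_B$: if it were, take the alternating path $P$ from $\mathcal S_0$ to $X$ and append the edge $XY$ — since $Y$ is not an endpoint of a matching edge touched by $P$ in a bad way, or since $Y$'s large $\mathcal L$-degree gives room to continue, we build an alternating path witnessing that some cluster of $\mathcal S_0$ can be matched, contradicting optimality of $\mathbf M$; (4) so $X\in\mathcal L_A$, and combined with $\avdeg(X,\mathcal S_0)=0$ and $\avdeg(Y,\mathcal L)$ large, derive that configuration A (or B) would occur with the pair $(Y,X)$ or $(X,Y)$ playing the roles of $(X,Y)$ in Proposition~\ref{prop:comb} — contradicting the standing assumption that no configuration occurs. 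The main obstacle I anticipate is step~(3)/(4): making the alternating-path surgery precise when $Y$ itself may or may not be matched and may or may not already lie on $P$, and correctly matching up the degree thresholds ($(\tr+r'\eta/2)k$ versus the $+\eta k/4$ slacks in the configurations) so that the contradiction is with an actual configuration of Proposition~\ref{prop:comb} rather than a near-miss. I would handle the case split on whether $Y\in V(\mathbf M)$ and whether $Y\in\mathcal B$ explicitly, since that is where the bookkeeping lives.
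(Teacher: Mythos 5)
There is a genuine gap: you have missed the one observation that drives the proof, and the machinery you propose in its place does not reach the conclusion. Since $X\in\mathcal L$, property (6) of an $r$-skew LKS-graph gives $\avdeg(X)\ge(1+\eta)k$; since $V(\mathbf H)$ is partitioned into $\mathcal L\cup\mathcal S_0\cup\mathcal S_1\cup\mathcal S_M$ and $\avdeg(X,\mathcal S_0)=0$, this yields $\avdeg(X,\mathcal L\cup\mathcal S_1\cup\mathcal S_M)\ge(1+\eta)k>k+\eta k/4$. Together with the edge $XY$ and the hypothesis $\avdeg(Y,\mathcal L)\ge(\tr+r'\eta/2)k\ge \tr k+\eta r'k/4\ge b_1+\eta r'k/4$ (using $b_1\le a_2+b_1=\tr k$), the pair $(X,Y)$ realises Configuration~B when $\tr a_1>(1-\tr)a_2$ and Configuration~C otherwise, contradicting the standing assumption, made just before the claim, that none of the four configurations occurs. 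That is the entire proof; no alternating-path surgery and no appeal to $\mathcal L_A$, $\mathcal L_B$, or the optimality of $\mathbf M$ is needed or helpful here.

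Your steps (1) and (3) are also problematic in themselves. In (1), $Y$ need not lie in $\mathcal L$: in the applications (Corollary~\ref{comb:LAprops2}) $Y$ ranges over $\mathcal L^*\cup\mathcal S_1$ and over $\mathcal S_A$, and the claim never assumes $Y\in\mathcal L$ --- Configurations~B and~C only require a neighbour of $X$ with large average degree into $\mathcal L$, which an $S$-cluster can have. In (3), appending the non-matching edge $XY$ to an alternating path ending at $X\in\mathcal L_B$ does not produce an augmenting path: to contradict the minimality of $|\mathcal S_0|$ you would need the extended path to terminate so that swapping along it matches a cluster of $\mathcal S_0$, and ``$Y$ has many $\mathcal L$-neighbours'' provides nothing of the sort (indeed nothing in the claim prevents $X\in\mathcal L_B$). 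Finally, you name the wrong configurations: Configuration~A demands $\avdeg(X,\mathcal S_1\cup\mathcal S_M)\ge a_2(1-\tr)/\tr+\eta k/4$, a bound on the degree into $\mathcal S_1\cup\mathcal S_M$ alone, which does not follow from the hypotheses; the configurations actually reached are B and C.
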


\begin{proof}
If there is $X \in \mathcal L$ such that $\avdeg(X, \mathcal S_0) = 0$, then we have $\avdeg(X, \mathcal L \cup \mathcal S_1 \cup \mathcal S_M) \ge (1+\eta)k$. 
Now suppose that there is an edge between such a  cluster~$X$ and a cluster~$Y$ with $\avdeg(Y, \mathcal L) \ge (\tr+r'\eta/2) k$. 
If $\biga$, we have found Configuration~B. If, on the other hand, $\bigb$, recall that $b_1 \le a_2 + b_1 = \tr k$, meaning that we have found Configuration~C. 
\end{proof}

\begin{corollary}
\label{comb:LAprops2}
We have:
\begin{enumerate}
\item
$e(\mathcal L_A, \mathcal L^* \cup \mathcal S_1)=0$, thus $\mathcal N$ is a subset of $\mathcal L_B$, 
\item
$\forall X \in \mathcal N : \avdeg(X, \mathcal L) < (\tr+r'\eta/2) k$, 
\item
$\forall X \in \mathcal S_A : \avdeg(X) = \avdeg(X,\mathcal L) < (\tr+r'\eta/2) k$.
\end{enumerate}
\end{corollary}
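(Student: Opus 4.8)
The plan is to derive all three statements from Claim~\ref{comb:LAprops1} (together with Claim~\ref{comb:basic_props}) and the definitions of $\mathcal L^*$, $\mathcal L^+$, $\mathcal S_1$, $\mathcal S_A$. The governing observation is that, by Claim~\ref{comb:LAprops1}, any cluster $X\in\mathcal L$ with $\avdeg(X,\mathcal S_0)=0$ cannot be adjacent to any cluster $Y$ with $\avdeg(Y,\mathcal L)\ge(\tr+r'\eta/2)k$; and clusters in $\mathcal L_A$ are precisely the ones that are \emph{not} reachable by an alternating path from $\mathcal S_0$, which should force $\avdeg(X,\mathcal S_0)=0$ for $X\in\mathcal L_A$.

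For (1): First I would argue that every $X\in\mathcal L_A$ satisfies $\avdeg(X,\mathcal S_0)=0$. Indeed, if $X$ had a neighbour in $\mathcal S_0$, that neighbour together with $X$ would give an alternating path of length one starting in $\mathcal S_0$ and ending at $X$; but such a path would place $X$ (or rather the odd/even pattern of the path) into $\mathcal B$ — more carefully, since $X\in V(\mathbf M)$ by definition of $\mathcal L_A\subseteq V(\mathbf M)$, $X$ is matched to some $S$-cluster $X'\in\mathcal S_M$, and then $\mathcal S_0\,X\,X'$ is an alternating path exhibiting $X'\in\mathcal B$, hence $X\in\mathcal B$ as well via the path $\mathcal S_0\,X'\,X$ — contradicting $X\in\mathcal L_A=V(\mathbf M)\setminus\mathcal B$. (I would double-check the exact parity convention in the definition of $\mathcal B$ here; this is the one slightly fiddly point.) Once $\avdeg(X,\mathcal S_0)=0$ for all $X\in\mathcal L_A$, Claim~\ref{comb:LAprops1} immediately yields that no such $X$ is adjacent to any cluster of average degree (into $\mathcal L$) at least $(\tr+r'\eta/2)k$; in particular $X$ has no neighbour in $\mathcal L^*$. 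For the $\mathcal S_1$ part: a cluster $Z\in\mathcal S_1$ has $\avdeg(Z)\ge(\tr+r'\eta)k$, so by the skew relation $\avdeg$ as seen from its $L$-neighbours is correspondingly large; more directly, an edge $XZ$ with $X\in\mathcal L_A$, $Z\in\mathcal S_1$ contradicts $\mathcal S_1=\mathcal S\setminus(\mathcal S_M\cup\mathcal S_0)$ combined with Claim~\ref{comb:basic_props} — no, the clean route is: such an edge could be used to extend $\mathbf M$ (since $Z\notin V(\mathbf M)$ and $X$'s match could be swapped), again contradicting minimality/the definition of $\mathcal L_A$. I would pick whichever of these is cleanest after pinning down the definitions. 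Then $e(\mathcal L_A,\mathcal L^*\cup\mathcal S_1)=0$, and since $\mathcal N=N(\mathcal L_A^*)\cap\mathcal L\subseteq N(\mathcal L_A)\cap\mathcal L$ and $\mathcal L=\mathcal L^*\cup(\mathcal L\setminus\mathcal L^*)$, we get $\mathcal N\subseteq\mathcal L\setminus\mathcal L^*$; ruling out $\mathcal L_A\setminus\mathcal L_A^*=\mathcal L_A^+$ as well (no edge from $\mathcal L_A$ into $\mathcal L^*$, and $\mathcal L_A^+$ vertices send most of their degree to $\mathcal S_M\cup\mathcal S_1$, not back to $\mathcal L_A$) forces $\mathcal N\subseteq\mathcal L_B$.

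For (2): a cluster $X\in\mathcal N$ is a neighbour of some $\mathcal L_A^*$-cluster; by (1), $X\notin\mathcal L^*$, which is exactly the statement $\avdeg(X,\mathcal L)<(\tr+r'\eta/2)k$ by definition of $\mathcal L^*$. For (3): a cluster $X\in\mathcal S_A\subseteq\mathcal S_M$ has, by Claim~\ref{comb:basic_props} applied with the $\mathcal S_A$ analogue, or directly since $\mathcal S_A=\mathcal S_M\setminus\mathcal S_B$ and $\mathcal S_A$-clusters are not reachable from $\mathcal S_0$: if $\avdeg(X)\ge(\tr+r'\eta/2)k$ then, as in the proof of Claim~\ref{comb:basic_props}, rerouting $\mathbf M$ along an alternating path would contradict minimality of $|\mathcal S_0|$ — wait, that argument needs $X$ reachable from $\mathcal S_0$; instead, $X\in\mathcal S_A$ means all its $L$-neighbours in $\mathbf M$ lie in $\mathcal L_A$, and by (1) those have no neighbours in $\mathcal L^*\cup\mathcal S_1$, which constrains $\avdeg(X)$; the clean statement is that $\avdeg(X)=\avdeg(X,\mathcal L)$ because $X$ has no $S$-neighbours (no $S$–$S$ edges) and then the bound $<(\tr+r'\eta/2)k$ follows since otherwise $X\in\mathcal L^*$-style reasoning gives a forbidden edge. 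I expect the main obstacle to be purely bookkeeping: getting the alternating-path parity in the definition of $\mathcal B$ exactly right so that "$X\in\mathcal L_A\Rightarrow\avdeg(X,\mathcal S_0)=0$" and "$X\in\mathcal S_A\Rightarrow\avdeg(X)<(\tr+r'\eta/2)k$" are genuinely forced rather than merely plausible; once those two reductions are in place, all three items are one-line consequences of Claim~\ref{comb:LAprops1} and the definitions.
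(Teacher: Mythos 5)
Your overall route is the paper's: all three items are meant to follow from Claim~\ref{comb:LAprops1} once one knows that $\avdeg(X,\mathcal S_0)=0$ for every $X\in\mathcal L_A$ --- and that last fact is literally the second assertion of Claim~\ref{comb:basic_props}, so your re-derivation of it (and the parity worry attached to it) is unnecessary. Items (2) and (3) are fine in outline: for (3) the point you need, and almost state, is that the matching partner of $X\in\mathcal S_A$ lies in $\mathcal L_A$, that the matching edge itself becomes the edge forbidden by Claim~\ref{comb:LAprops1} as soon as $\avdeg(X,\mathcal L)\ge(\tr+r'\eta/2)k$, and that $\avdeg(X)=\avdeg(X,\mathcal L)$ because an LKS-graph has no edges between $S$-clusters.

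The one genuine problem is the $\mathcal S_1$ half of item (1). The route you finally settle on --- ``such an edge could be used to extend $\mathbf M$'' --- does not work: $\mathbf M$ is not chosen to be maximum, it is chosen so that as few clusters of $\mathcal S_0$ as possible are left uncovered, and a cluster $Z\in\mathcal S_1$ is by definition \emph{not} in $\mathcal S_0$; adding or rerouting a matching edge into $Z$ changes nothing that the extremal choice controls, so no contradiction results. The correct argument is the first one you floated and then abandoned, minus the ``skew relation'' detour: $Z\in\mathcal S_1$ satisfies $\avdeg(Z)\ge(\tr+r'\eta)k$, and since there are no edges inside $\mathcal S$, all of this degree goes to $\mathcal L$, i.e.\ $\avdeg(Z,\mathcal L)=\avdeg(Z)\ge(\tr+r'\eta/2)k$. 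Hence $Z$ satisfies exactly the hypothesis placed on $Y$ in Claim~\ref{comb:LAprops1}, and that claim excludes the edge $XZ$ in the same breath as it excludes an edge from $\mathcal L_A$ into $\mathcal L^*$. With that fixed, $e(\mathcal L_A,\mathcal L^*\cup\mathcal S_1)=0$, and $\mathcal N\subseteq\mathcal L_B$ follows in one line: a cluster of $\mathcal N\cap\mathcal L_A$ would be a cluster of $\mathcal L_A$ adjacent to a cluster of $\mathcal L_A^*\subseteq\mathcal L^*$, which has just been ruled out; your detour through $\mathcal L\setminus\mathcal L^*$ and the remark that $\mathcal L_A^+$ ``sends most of its degree elsewhere'' is neither needed nor, on its own, a proof.
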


\begin{proof}
${}$
\begin{enumerate}
    \item 
    Suppose that there is an edge between $X \in \mathcal L_A$ and $Y \in \mathcal L^* \cup \mathcal S_1$. From Claim \ref{comb:basic_props} we get that $\avdeg(X, \mathcal S_0) = 0$. From the definition of $\mathcal L^*$ and $\mathcal S_1$ we have $\avdeg(Y, \mathcal L) \ge (\tr+r'\eta/2) k$. Thus we can apply Claim \ref{comb:LAprops1} for $X$ and $Y$. 
    \item
    Each vertex $Y \in \mathcal N$ has a neighbour $X \in \mathcal L_A^*$. If $\avdeg(Y,\mathcal L) \ge (\tr+r'\eta/2) k$ we are in the situation of the first part of this claim. 
    \item
    Each vertex $Y \in \mathcal S_A$ is matched to a vertex $X \in \mathcal L_A$. If $\avdeg(Y,\mathcal L) \ge (\tr+r'\eta/2) k$, we are, yet again, in the situation of the first part of the claim. 
\end{enumerate}
\end{proof}

\begin{claim}
\label{comb:Nclusters}
Every cluster  in $\mathcal N$ has average degree at least $(\tr+\eta/2) k$ in $\mathcal S_0$.
\end{claim}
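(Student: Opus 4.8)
The plan is to show that every cluster $X\in\mathcal N$ is forced to spend essentially all of its degree — which is at least $(1+\eta)k$ by the definition of an $r'$-skew LKS-graph, since $X\in\mathcal L$ — inside $\mathcal S_0$. Two facts about $\mathcal N$ are already in hand: $\mathcal N\subseteq\mathcal L_B$ (Corollary~\ref{comb:LAprops2}(1)), and $\avdeg(X,\mathcal L)<(\tr+r'\eta/2)k$ for every $X\in\mathcal N$ (Corollary~\ref{comb:LAprops2}(2)). The first of these in particular says that every $X\in\mathcal N$ is covered by~$\mathbf M$, which is what will let us run a swap argument.

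First I would record the bookkeeping fact $\mathcal S_M\subseteq\mathcal S_0$: Claim~\ref{comb:basic_props} gives $\mathcal S_B\subseteq\mathcal S_0$, Corollary~\ref{comb:LAprops2}(3) gives $\mathcal S_A\subseteq\mathcal S_0$, and $\mathcal S_M=\mathcal S_A\cup\mathcal S_B$. Consequently every $S$-cluster outside $\mathcal S_0$ is left unmatched by $\mathbf M$ (note also that then $\mathcal S\setminus\mathcal S_0=\mathcal S_1$).

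The only real step is a swap argument exploiting the minimality of $\mathbf M$. Fix $X\in\mathcal N$. Since $X\in\mathcal L_B\subseteq V(\mathbf M)$, it is matched to some $S$-cluster $Z$, and $Z\in\mathcal S_M\subseteq\mathcal S_0$. Suppose $X$ had a neighbour $W\in\mathcal S\setminus\mathcal S_0$. Then $W$ is unmatched by the previous paragraph, so $\mathbf M':=(\mathbf M\setminus\{XZ\})\cup\{XW\}$ is again a matching in $\mathbf H[\mathcal L,\mathcal S]$; its vertex set is $(V(\mathbf M)\setminus\{Z\})\cup\{W\}$, hence $V(\mathbf M')\cap\mathcal S_0=(V(\mathbf M)\cap\mathcal S_0)\setminus\{Z\}$ is strictly smaller than $V(\mathbf M)\cap\mathcal S_0$ (here $W\notin\mathcal S_0$ while $Z\in\mathcal S_0$), contradicting the choice of $\mathbf M$. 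Therefore $X$ has no neighbour in $\mathcal S\setminus\mathcal S_0$, i.e. $\avdeg(X,\mathcal S_0)=\avdeg(X,\mathcal S)=\avdeg(X)-\avdeg(X,\mathcal L)$.

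It then remains to put in the numbers: $\avdeg(X)\ge(1+\eta)k$ and $\avdeg(X,\mathcal L)<(\tr+r'\eta/2)k$, so $\avdeg(X,\mathcal S_0)>(1+\eta-\tr-r'\eta/2)k$; and $(1+\eta-\tr-r'\eta/2)-(\tr+\eta/2)=(1-2\tr)+\frac{\eta}{2}(1-r')\ge 0$ because $\tr\le r'\le 1/2$. Hence $\avdeg(X,\mathcal S_0)>(\tr+\eta/2)k$, giving the claim. I do not expect any genuine obstacle here; the only points to be careful about are that deleting $XZ$ and inserting $XW$ keeps $X$ covered, so exactly $Z$ leaves the covered set, and that $W\neq Z$ (immediate, since $W\notin\mathcal S_0\ni Z$).
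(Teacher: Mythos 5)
Your argument does not work, and the failure is at the very first ``bookkeeping fact''. The set $\mathcal S_0$ is not the set of all $S$-clusters of small average degree: it is the set of such clusters that are \emph{left uncovered} by $\mathbf M$. This is the only reading under which ``a matching minimising the number of vertices in $\mathcal S_0$'' is meaningful (otherwise $|\mathcal S_0|$ would not depend on $\mathbf M$ at all), and it is confirmed by the proof of Claim~\ref{comb:basic_props}, by the identity $\mathcal S_1=\mathcal S\setminus(\mathcal S_M\cup\mathcal S_0)$, by the computation $\avdeg(Y)=\avdeg(Y,\mathcal S_0)+\avdeg(Y,\mathcal S_1\cup\mathcal S_M\cup\mathcal L)$ at the start of the paper's proof of this very claim, and by inequality~\eqref{comb:counting_lemma_3}, which treats $\mathcal S_A,\mathcal S_B,\mathcal S_0,\mathcal S_1$ as a partition of $\mathcal S$. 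Hence $\mathcal S_0\cap\mathcal S_M=\emptyset$, and your inclusion $\mathcal S_M\subseteq\mathcal S_0$ is false: Claim~\ref{comb:basic_props} and Corollary~\ref{comb:LAprops2}(3) say that the clusters of $\mathcal S_M$ have small average degree, not that they lie in $\mathcal S_0$. Your swap argument then collapses twice over. First, the extremality of $\mathbf M$ is that it covers as \emph{many} small-degree clusters as possible; replacing the matching edge $XZ$ by $XW$ with $W$ of large average degree only makes the matching worse, which contradicts nothing (and under your own reading of the extremality, the empty matching would be optimal, trivialising the whole construction). Second, the intermediate conclusion you aim for---that $X\in\mathcal N$ has no neighbour in $\mathcal S\setminus\mathcal S_0$---is plainly false: $X\in\mathcal N\subseteq\mathcal L_B\subseteq V(\mathbf M)$ is matched to some $Z\in\mathcal S_M\subseteq\mathcal S\setminus\mathcal S_0$, and $XZ$ is an edge of $\mathbf H$, so $\avdeg(X,\mathcal S_M)>0$.

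More fundamentally, the claim cannot be obtained by such a local argument about $\mathbf M$. Its real content is that a cluster $Y\in\mathcal N$ with $\avdeg(Y,\mathcal S_0)<(\tr+\eta/2)k$ would satisfy $\avdeg(Y,\mathcal S_1\cup\mathcal S_M\cup\mathcal L)\ge(1-\tr+\eta/2)k$, and this large degree, split suitably between $\mathcal L$ and $\mathcal S_1\cup\mathcal S_M$ and combined with the properties of $Y$'s neighbour in $\mathcal L_A^*$, yields one of the embedding configurations A--D, contradicting the standing assumption (made just before Claim~\ref{comb:LAprops1}) that none of them occurs. The paper's proof is accordingly a three-way case analysis on $a_1,a_2,b_1$; the claim is genuinely conditional on that assumption and is not a structural consequence of the choice of $\mathbf M$ alone.
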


\begin{proof}
Suppose that it is not so. Then we have a cluster $Y \in \mathcal N$ such that 
\begin{align*}
    \avdeg(Y,\mathcal  S_1 \cup \mathcal S_M \cup \mathcal L) 
    &\geq (1+\eta)k - (\tr+\eta/2)  k \\
    &\ge (1-\tr+\eta/2)k\;. 
\end{align*}
Now we consider separately three cases:

\begin{enumerate}
\item
    Suppose that $\tr  a_1 \le (1-\tr ) a_2$. Then either 
    \begin{align*}
        \avdeg(Y, \mathcal L) \geq b_1+\eta k/4,
    \end{align*}
    which leads to the Configuration~C (consider $Y$ and its neighbour in $\mathcal L_A^*$), or we have
    \begin{align*}
    \avdeg(Y, \mathcal S_1 \cup \mathcal S_M) 
    &= \avdeg(Y, \mathcal S_1 \cup \mathcal S_M \cup \mathcal L) - \avdeg(Y, \mathcal L)\\
    &\geq (1-\tr )k+\eta k/2 - (b_1+\eta k/4) \\
    &= \frac{1-\tr}{\tr} \tr k -b_1+\eta k/4 \\
    &= \frac{1-\tr}{\tr}(b_1+a_2)-b_1+\eta k/4 \\
    &= \frac{1-2\tr}{\tr }b_1 + \frac{1-\tr}{\tr} a_2+\eta k/4 \\
    &\geq \frac{1-\tr}{\tr} a_2+\eta k/4,
    \end{align*}
    where we used the bound on the average degree of $Y$ and then the facts that $b_1 + a_2 = \tr k$ and $\tr \le r' \leq 1/2$. This, on the other hand, leads to the Configuration~A (again, consider $Y$ and its neighbour in $\mathcal L_A^*$).

\item
    Suppose that $\biga$ and $b_1 \le \frac{\tr^2}{1-\tr} k$. 
    Following the same considerations as in the previous case we get that either $\avdeg(Y, \mathcal L) \geq b_1 +\eta k/4$ or $\avdeg(Y, \mathcal S_1 \cup \mathcal S_M) \ge \frac{1-\tr}{\tr} a_2+\eta k/4$. The second case leads, again, to the Configuration~A. We now proceed with the first case. 
    
    Let $X$ be a neighbour of $Y$ in $\mathcal L_A^*$. From Claim~\ref{comb:basic_props} we have $\avdeg(X, \mathcal S_0)=0$ and from Corollary~\ref{comb:LAprops2}.1 we have $\avdeg(X,\mathcal S_1)=0$, thus 
    \begin{align*}
        \avdeg(X, \mathcal L \cup \mathcal S_M) 
        = \avdeg(X) 
        \ge (1+\eta)k 
        > k + \eta k/4.
    \end{align*}
    
    Moreover, all the matching edges containing clusters from $\mathcal S \cap N(X)$ must have both ends in the set~$\mathcal A$ because there are no edges between vertices from $\mathcal L \cap \mathcal A$ and $\mathcal S \cap \mathcal B$ (Claim~\ref{comb:basic_props}). On the other hand, all neighbours of $X$ in $\mathcal L$ have to be in $\mathcal B$ (Corollary~\ref{comb:LAprops2}~(1)), so all matching edges containing vertices from $\mathcal L \cap N(X)$ are in $\mathcal B$. Thus, all of the assumptions of Configuration~D for $X$ and $Y$ are satisfied. 
\item
    Finally we are left with the case $\biga$ and $b_1 > \frac{\tr^2}{1-\tr} k$. 
    Note that then we have 
    \begin{align*}
        a_2 
        = \tr k-b_1 
        < \tr k - \frac{\tr}{1-\tr} \tr k 
        = (1 - \frac{\tr}{1-\tr}) \tr k 
        = \left(1-2\tr \right)\frac{\tr}{1-\tr} k. 
    \end{align*}
    
    Now either 
    \begin{align*}
        \avdeg(Y, \mathcal L) 
        &\geq \tr k+r'\eta k/4,
    \end{align*}
    or 
    \begin{align*}
        \avdeg(Y, \mathcal S_1 \cup \mathcal S_M) 
        &= \avdeg(Y,\mathcal  S_1 \cup \mathcal S_M \cup \mathcal L) - \avdeg(Y, \mathcal L)\\
        &\geq (1-\tr )k + \eta k/2 - (\tr k + r'\eta k/4) \\
        &\ge (1-2\tr) k + \eta k/4 \\
        &= \frac{1-\tr}{\tr} \left( 1-2\tr \right) \frac{\tr}{1-\tr} k +\eta k /4 \\
        &\geq \frac{1-\tr}{\tr} a_2+\eta k /4.     
    \end{align*}
    
    The first option leads to Configuration~B while the second one leads to Configuration~A. 
\end{enumerate}
\end{proof}

After restricting the structure of $\mathbf{H}$ we are ready to derive a contradiction by combining several properties of $\mathbf{H}$ together. 
At first we estimate the size of the set $\mathcal L_A$. Recall that we have $|\mathcal L| \ge (1+\eta)|\mathcal S|$, thus $|\mathcal L| > |\mathcal S|$. We also know that $|\mathcal L_B|= |\mathcal S_B|$, because the two sets are matched in $\mathbf M$. This means that 
\begin{equation}
\label{comb:counting_lemma_3}
|\mathcal L_A| = |\mathcal L| - |\mathcal L_B| > |\mathcal S| - |\mathcal S_B| = |\mathcal S_A|+|\mathcal S_0|+|\mathcal S_1|.
\end{equation}
Now we proceed by bounding the size of the set $\mathcal N$. 

\begin{lemma}
\label{comb:counting_lemma}
Suppose that the set $\mathcal L_A^*$ (and thus also $\mathcal N$) is nonempty. Then the following inequality holds:
\begin{equation}
|\mathcal N|(\tr + r' \eta /2)  > |\mathcal S_0|(1-\tr + \eta / 2 ).
\end{equation}
\end{lemma}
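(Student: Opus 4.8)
The plan is to count edges between $\mathcal L_A^*$ and the set $\mathcal S_0$ in two ways — once from the $\mathcal S_0$ side and once from the $\mathcal L_A^* \subseteq \mathcal L$ side — and to control the intermediate quantity via the set $\mathcal N = N(\mathcal L_A^*)\cap \mathcal L$. Concretely, I would look at the bipartite subgraph $\mathbf H[\mathcal L_A^*, \mathcal S_0]$ and write its edge count (weighted by average degrees, i.e.\ really the number of edges in the underlying graph $H$, normalised by cluster sizes) as a sum over $\mathcal S_0$ and, separately, bound it using $\mathcal N$. The key structural facts I will feed in are: (i) by Claim~\ref{comb:basic_props}, clusters in $\mathcal L_A$ (hence in $\mathcal L_A^*$) have $\avdeg(\cdot,\mathcal S_0)$ possibly positive but their neighbourhoods in $\mathcal S_0$ behave well — more to the point, (ii) every cluster of $\mathcal S_0$ has small total average degree, namely $\avdeg(Z) < (\tr + r'\eta/2)k$ for $Z\in\mathcal S_0$ by definition of $\mathcal S_0$, while by Claim~\ref{comb:Nclusters} every cluster $Y\in\mathcal N$ has $\avdeg(Y,\mathcal S_0)\ge (\tr+\eta/2)k$.

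The main step is the double count. From the $\mathcal N$ side: since $\mathcal N\subseteq \mathcal L$ and, crucially, by Corollary~\ref{comb:LAprops2}(1) the only $\mathcal L$-neighbours that $\mathcal L_A^*$ clusters reach lie in $\mathcal L_B$ while $\mathcal N\subseteq\mathcal L_B$ — I will translate ``$Y\in\mathcal N$ has large average degree into $\mathcal S_0$'' into a lower bound on the number of $H$-edges between $\bigcup\mathcal N$ and $\bigcup\mathcal S_0$: roughly $|\mathcal N|\cdot(\tr+\eta/2)k\cdot |C|$ where $|C|$ is the common size of an $L$-cluster (using the skew relation $r'|D| = (1-r')|C|$ to convert $S$-side averages to $L$-side averages where needed). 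From the $\mathcal S_0$ side: every edge from $\bigcup\mathcal S_0$ into $\bigcup\mathcal N$ uses up part of the degree budget of $\mathcal S_0$-vertices, and that budget is bounded by $\avdeg(Z,\mathcal L) \le \avdeg(Z) < (\tr+r'\eta/2)k$ for each $Z\in\mathcal S_0$; summing over the $|\mathcal S_0|$ clusters and converting by the size ratio gives an upper bound of essentially $|\mathcal S_0|\cdot(\tr+r'\eta/2)k$ on the same (normalised) edge count, but measured on the $S$-side. Matching the two estimates — after accounting for the $r'$ factor relating the two measurements of the bipartite edge set $\mathbf H[\mathcal N,\mathcal S_0]$ — yields $|\mathcal N|(\tr+r'\eta/2) \ge |\mathcal S_0|(1-\tr+\eta/2)$ up to lower-order $\varepsilon n$-type error terms, which I then absorb using the gap between $\eta/2$ and $r'\eta/2$ (and the nonemptiness of $\mathcal L_A^*$, which guarantees $\mathcal N\ne\emptyset$ so there is something to double count against, and makes the inequality strict).

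I would organise the calculation as: first fix the common $L$-cluster size $|C|$ and record $|D| = \frac{1-r'}{r'}|C|$; second, lower-bound $e_H(\bigcup\mathcal N, \bigcup\mathcal S_0)$ using Claim~\ref{comb:Nclusters} (each $Y\in\mathcal N$ contributes $\ge(\tr+\eta/2)k|C|$ edges into $\bigcup\mathcal S_0$, and all these edges land in $\bigcup\mathcal S_0$ because they are counted as $\avdeg(Y,\mathcal S_0)$); third, upper-bound the same quantity from the $\mathcal S_0$ side by $\sum_{Z\in\mathcal S_0}\avdeg(Z,\mathcal L)|D| < |\mathcal S_0|(\tr+r'\eta/2)k|D|$, and here I must be careful that the edges from $\bigcup\mathcal S_0$ that I am counting really do only go to $\bigcup\mathcal N$ (they need not — $\mathcal S_0$ vertices may have other $\mathcal L$-neighbours, but that only makes the upper bound looser, which is the direction I want); fourth, equate and simplify using $|D|/|C| = (1-r')/r'$.

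The hard part will be bookkeeping the conversion between ``average degree measured from the $L$-side'' and ``from the $S$-side'' correctly — the skew introduces the factor $\frac{1-r'}{r'}$ and it is easy to put it on the wrong side of the inequality, which would flip the roles of $\tr$ and $1-\tr$. One has to check that after the conversion the $(\tr+\eta/2)$ on the $\mathcal N$ side becomes the $(1-\tr+\eta/2)$ appearing on the right of the claimed inequality (this is exactly the point where the relation $a_2 + b_1 = \tr k$, i.e.\ $\tr \le r' \le 1/2$, and the multiplication by $\frac{1-r'}{r'}\ge 1$ do the work), and that all the $\varepsilon n$, garbage-set, and rounding errors are dominated by the slack $\tfrac{\eta}{2} - \tfrac{r'\eta}{2} = \tfrac{(1-r')\eta}{2} > 0$ between the two $\eta$-coefficients, which is where strictness of the final inequality and the hypothesis $\mathcal L_A^* \ne \emptyset$ get used.
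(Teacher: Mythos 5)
Your proposed double count goes in the wrong direction and cannot prove the lemma. You lower-bound $e_H(\bigcup\mathcal N,\bigcup\mathcal S_0)$ by $|\mathcal N|(\tr+\eta/2)k$ per $\mathcal N$-cluster (via Claim~\ref{comb:Nclusters}) and upper-bound it by $|\mathcal S_0|(\tr+r'\eta/2)k$ per $\mathcal S_0$-cluster (via the definition of $\mathcal S_0$), with the skew factor $\tfrac{1-r'}{r'}$ converting between the two sides. Chaining these gives
\begin{equation*}
|\mathcal N|(\tr+\eta/2)\;\le\;\tfrac{1-r'}{r'}\,|\mathcal S_0|(\tr+r'\eta/2)\;\le\;|\mathcal S_0|(1-r')(1+\eta/2),
\end{equation*}
which is an \emph{upper} bound on $|\mathcal N|$ in terms of $|\mathcal S_0|$ — the exact opposite of what the lemma asserts. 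No amount of bookkeeping of the $\tfrac{1-r'}{r'}$ factor fixes this: with a per-cluster lower bound on the $\mathcal N$ side and a per-cluster upper bound on the $\mathcal S_0$ side, sandwiching the common edge count can only ever yield $|\mathcal N|\cdot(\text{lower})\le|\mathcal S_0|\cdot(\text{upper})$. Indeed, this computation is precisely the one the paper performs \emph{afterwards}, in Corollary~\ref{comb:A_is_empty}, where it is played off against the lemma to produce the contradiction showing $\mathcal L_A=\emptyset$.

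The actual proof of the lemma does not look at edges between $\mathcal N$ and $\mathcal S_0$ at all; in fact $\mathcal L_A$ sends no edges to $\mathcal S_0$ (Claim~\ref{comb:basic_props}), and $\mathcal S_0$ enters the final inequality purely as a cardinality deficit via \eqref{comb:counting_lemma_3}, i.e.\ $|\mathcal L_A|>|\mathcal S_A|+|\mathcal S_0|+|\mathcal S_1|$, which follows from $|\mathcal L|>|\mathcal S|$ and $|\mathcal L_B|=|\mathcal S_B|$. The real work is two different double counts: edges between $\mathcal L_A^+$ and $\mathcal S_A$ (each $\mathcal L_A^+$-cluster must dump $(1-\tr+\eta/2)k$ of degree into $\mathcal S_A$, whose clusters can absorb only $(\tr+r'\eta/2)k$ each), and edges between $\mathcal L_A^*$ and $\mathcal N$ (each $\mathcal L_A^*$-cluster has total degree $(1+\eta)k$ which, apart from what goes to $\mathcal S_A$, must land in $\mathcal N$, whose clusters absorb only $(\tr+r'\eta/2)k$ each by Corollary~\ref{comb:LAprops2}). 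Combining these with \eqref{comb:counting_lemma_3} forces $\mathcal N$ to be large. You would need to restart from this decomposition of $\mathcal L_A$ into $\mathcal L_A^*$ and $\mathcal L_A^+$; the ingredients you chose (Claim~\ref{comb:Nclusters} and the degree cap on $\mathcal S_0$) belong to the subsequent contradiction step, not to this lemma.
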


\begin{proof}
We estimate the number of edges between $L_A^+$ and $S_A$. For $\mathcal Y,\mathcal Z\subseteq V(\mathbf H)$, we set $\vec{w}(\mathcal Z,\mathcal  Y) :=\sum_{Z\in \mathcal Z}\avdeg(Z, \mathcal Y)$. On one hand we have
\begin{align*}
\vec{w}(\mathcal L_A^+,\mathcal  S_A) =
\sum_{Z\in \mathcal L_A^+}\avdeg(Z, \mathcal S)
\geq
|\mathcal L_A^+|(1-\tr +\eta/2)k, 
\end{align*}
because $\vec{w}(\mathcal L_A^+, \mathcal S_B\cup \mathcal S_0)=0$ (Claim~\ref{comb:basic_props}) and $\vec{w}(\mathcal L_A^+, \mathcal S_1)=0$ (Corollary~\ref{comb:LAprops2}). 
On the other hand we have
\begin{align*}
\vec{w}(\mathcal L_A^+, \mathcal S_A)
&= \sum_{Z\in \mathcal L_A^+,W\in \mathcal S_A }\avdeg(Z, W)\\
&=\sum_{Z\in \mathcal L_A^+,W\in \mathcal S_A } \frac{1-r'}{r'}\avdeg(W, Z)\\
&= \frac{1-r'}{r'} \vec{w}(\mathcal S_A, \mathcal L_A^+)\\
&\le \frac{1-r'}{r'} ( |\mathcal S_A|(\tr+r'\eta/2) k - \vec{w}(\mathcal S_A, \mathcal L_A^*) ) \\
&\le (1-r')  |\mathcal S_A|(1+\eta/2) k - \vec{w}(\mathcal S_A, \mathcal L_A^*)  \\
&\le |\mathcal S_A|(1+\eta/2)(1-\tr )k - \vec{w}(\mathcal L_A^*, \mathcal S_A)\;, 
\end{align*}
because all the clusters from $\mathcal S_A$ (if there are any) have their average degree bounded by $(\tr+r'\eta/2) k$ (Corollary~\ref{comb:LAprops2}), and $\tr \le r'\le 1/2$. After combining the inequalities we get 
\begin{align}
\label{comb:counting_lemma_1}
|\mathcal L_A^+|(1-\tr +\eta/2)k 
\le |\mathcal S_A|(1-\tr +\eta/2)k - \vec{w}(\mathcal L_A^*, \mathcal S_A).
\end{align}

We continue by estimating the number of edges between $\mathcal L_A^*$ and $\mathcal N$. On one hand we have
\begin{align*}
\vec{w}(\mathcal L_A^*,\mathcal  N)  
= \vec{w}(\mathcal N, \mathcal L_A^*)  
\le |\mathcal N|(\tr+r'\eta/2) k
\end{align*}
due to Corollary~\ref{comb:LAprops2}~(2). On the other hand we have 
\begin{align*}
\vec{w}(\mathcal L_A^*,\mathcal  N)  
&= \vec{w}(\mathcal L_A^*, V(\mathbf H)) - \vec{w}(\mathcal L_A^*,\mathcal  S_A) - \vec{w}(\mathcal L_A^*,\mathcal  S_1 \cup \mathcal S_B \cup\mathcal  S_0) \\
&= \vec{w}(\mathcal L_A^*, V(\mathbf H)) - \vec{w}(\mathcal L_A^*, \mathcal S_A) \\
& \ge |\mathcal L_A^*|(1+\eta)k - \vec{w}(\mathcal L_A^*,\mathcal  S_A)\;,
\end{align*}
because there are neither edges between $\mathcal L_A^*$ and $\mathcal S_1$ (Corollary~\ref{comb:LAprops2}~(1)), nor edges between $\mathcal L_A^*$ and $\mathcal S_B \cup \mathcal S_0$ (Claim~\ref{comb:basic_props}) and clusters in $\mathcal L$ have large degree.  

By combining the two inequalities we get
\begin{equation}
\label{comb:counting_lemma_2}
|\mathcal L_A^*|(1+\eta)k - \vec{w}(\mathcal L_A^*, \mathcal S_A)  \le  |\mathcal N|(\tr+r'\eta/2) k.
\end{equation}

Combining Inequalities~\eqref{comb:counting_lemma_1}, \eqref{comb:counting_lemma_2} and~\eqref{comb:counting_lemma_3} in this order we get:
\begin{align*}
|\mathcal N|(\tr + r' \eta /2)k 
&\ge |\mathcal L_A^*|(1+\eta)k - \vec{w}(\mathcal L_A^*, \mathcal S_A)\\
&\ge  |\mathcal L_A^*|(1+\eta/2)k + |\mathcal L_A^+|(1-\tr +\eta /2)k - |\mathcal S_A|(1-\tr +\eta /2)k\\
&= |\mathcal L_A|(1-\tr +\eta /2)k + |\mathcal L_A^*|\tr k - |\mathcal S_A|(1-\tr +\eta /2)k\\
&= |\mathcal L_A^*|\tr k + (|\mathcal L_A| - |\mathcal S_A|)(1-\tr +\eta /2)k\\
&>  |\mathcal L_A^*|\tr k + (|\mathcal S_0|+|\mathcal S_1|)(1-\tr +\eta /2)k\\
&\ge  |\mathcal S_0|(1-\tr + \eta /2)k 
\end{align*}
which concludes the proof. 
\end{proof}

\begin{corollary}
\label{comb:A_is_empty}
The set $\mathcal L_A$ is empty. 
\end{corollary}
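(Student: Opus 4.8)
The plan is to show $\mathcal L_A^*=\emptyset$ and then $\mathcal L_A^+=\emptyset$; since $\mathcal L_A=\mathcal L_A^*\cup\mathcal L_A^+$ (recall $\mathcal L_A^*=\mathcal L_A\setminus\mathcal L_A^+$), this forces $\mathcal L_A=\emptyset$. We work under the standing assumption that none of the four configurations occurs. To see $\mathcal L_A^*=\emptyset$, suppose not. Then $\mathcal N\neq\emptyset$, and Lemma~\ref{comb:counting_lemma} yields $|\mathcal N|(\tr+r'\eta/2)>|\mathcal S_0|(1-\tr+\eta/2)$, while Claim~\ref{comb:Nclusters} says every cluster of $\mathcal N$ has average degree at least $(\tr+\eta/2)k$ into $\bigcup\mathcal S_0$, whence also $\mathcal S_0\neq\emptyset$. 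Double-counting the edges between $\mathcal N\subseteq\mathcal L$ and $\mathcal S_0\subseteq\mathcal S$ --- converting between the two sides via $\avdeg(Z,W)=\frac{1-r'}{r'}\avdeg(W,Z)$ for $Z\in\mathcal L,W\in\mathcal S$, and using $\avdeg(W)<(\tr+r'\eta/2)k$ for every $W\in\mathcal S_0$ --- gives $|\mathcal N|(\tr+\eta/2)\le\frac{1-r'}{r'}|\mathcal S_0|(\tr+r'\eta/2)$.

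Combining this with the bound from Lemma~\ref{comb:counting_lemma} and cancelling the positive factors $|\mathcal N|$ and $|\mathcal S_0|$ reduces matters to the numerical inequality $(1-\tr+\eta/2)(\tr+\eta/2)<\frac{1-r'}{r'}(\tr+r'\eta/2)^2$, which I would contradict by the elementary estimates $\tr+\eta/2\ge\tr+r'\eta/2$ and $1-\tr+\eta/2\ge\frac{1-r'}{r'}(\tr+r'\eta/2)$, both consequences of $\tr\le r'\le 1/2$ (the second because $\frac{(1-r')\tr}{r'}\le 1-r'\le 1-\tr$ and $(1-r')\eta/2\le\eta/2$). Hence $\mathcal L_A^*=\emptyset$, so $\mathcal L_A=\mathcal L_A^+$. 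Assume for contradiction this set is nonempty. Since $\mathcal L_A$ has no edges to $\mathcal S_0\cup\mathcal S_B$ (Claim~\ref{comb:basic_props}) or to $\mathcal S_1$ (Corollary~\ref{comb:LAprops2}(1)), the definition of $\mathcal L^+$ forces $\avdeg(X,\mathcal S_A)=\avdeg(X,\mathcal S_M\cup\mathcal S_1)\ge(1-\tr+\eta/2)k$ for each $X\in\mathcal L_A^+$; double-counting the edges between $\mathcal L_A^+$ and $\mathcal S_A$, now using $\avdeg(W)<(\tr+r'\eta/2)k$ for $W\in\mathcal S_A$ (Corollary~\ref{comb:LAprops2}(3)) and again $\tr\le r'\le 1/2$, yields $|\mathcal L_A^+|<|\mathcal S_A|$. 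This contradicts~\eqref{comb:counting_lemma_3}, which gives $|\mathcal L_A|>|\mathcal S_A|+|\mathcal S_0|+|\mathcal S_1|\ge|\mathcal S_A|$; therefore $\mathcal L_A=\emptyset$.

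Both steps are short two-sided counting arguments, and the only point that needs care is the bookkeeping of the $L/S$ asymmetry, i.e. the factor $\frac{1-r'}{r'}$ relating $\avdeg(L,S)$ and $\avdeg(S,L)$, together with the observation that $\tr\le r'\le 1/2$ is exactly what makes the numerical comparisons tip the right way. I expect the main obstacle to be Step~1, since that is the only place where Lemma~\ref{comb:counting_lemma} and Claim~\ref{comb:Nclusters} must be played off against each other; once $\mathcal L_A=\mathcal L_A^+$ has been established, Step~2 is essentially a one-line count against~\eqref{comb:counting_lemma_3}.
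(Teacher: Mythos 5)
Your proposal is correct and follows essentially the same route as the paper: first eliminate $\mathcal L_A^*$ by playing Lemma~\ref{comb:counting_lemma} against Claim~\ref{comb:Nclusters} via a double count of the $\mathcal N$--$\mathcal S_0$ edges, then eliminate $\mathcal L_A^+$ by double counting the $\mathcal L_A^+$--$\mathcal S_A$ edges against inequality~\eqref{comb:counting_lemma_3}. The only difference is cosmetic bookkeeping (you reduce to the product inequality $(1-\tr+\eta/2)(\tr+\eta/2)\ge\frac{1-r'}{r'}(\tr+r'\eta/2)^2$, while the paper chains the estimates directly through $(1-\tr)(1+\eta/2)$), and both verifications go through under $\tr\le r'\le 1/2$.
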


\begin{proof}
Suppose that $\mathcal N$ (and thus also $\mathcal L_A^*$) is nonempty. Then on one hand we have
\begin{equation}
\vec{w}(\mathcal N,\mathcal S_0) 
= \frac{1-r'}{r'} \vec{w}(\mathcal S_0, \mathcal N) 
\leq \frac{1-r'}{r'} |\mathcal S_0|(\tr+r'\eta/2) k
\le  |\mathcal S_0|(1+\eta/2)(1-r')k\;, 
\end{equation}
due to the definition of $\mathcal S_0$ and the fact $\tr \le r'$. On the other hand we have
\begin{align}
\vec{w}(\mathcal N,\mathcal S_0) 
\ge |\mathcal N|(\tr+\eta/2) k
\end{align}
due to Claim~\ref{comb:Nclusters}. After combining the inequalities we get that 
\begin{equation}
|\mathcal N|(\tr+\eta/2)  \le |\mathcal S_0|(1-r')(1+\eta/2) \le |\mathcal S_0|(1-\tr )(1+\eta/2). 
\end{equation}
Combining with Lemma~\ref{comb:counting_lemma} we get
\[
    |\mathcal S_0|(1-\tr + \eta / 2 ) 
    < |\mathcal N|(\tr + r' \eta /2)
    < |\mathcal N|(\tr + \eta /2)
    \leq |\mathcal S_0|(1-\tr)(1 + \eta/2).
\]
which gives a contradiction, because
\[
    1 - \tr + \eta /2
    > 1 - \tr + \eta /2 - \tr \eta /2
    = (1-\tr)(1+\eta/2).
\]
Thus $\mathcal L_A^*$ and $\mathcal N$ are empty. 

Now suppose that $\mathcal L_A^+=\mathcal L_A$ is nonempty. Then on one hand we have
\begin{align*}
\vec{w}(\mathcal L_A^+, \mathcal S_A)
&= \frac{1-r'}{r'} \vec{w}(\mathcal S_A, \mathcal L_A^+)\\
&< \frac{1-r'}{r'} |\mathcal S_A|(\tr+r'\eta/2) k\\ 
&\le  |\mathcal S_A|(1+\eta/2)(1-r')k\;,
\end{align*}
because of Corollary~\ref{comb:LAprops2}~(3) and on the other hand we have 
\begin{align*}
\vec{w}(\mathcal L_A^+, \mathcal S_A) &= \vec{w}(\mathcal L_A^+, \mathcal S_M\cup \mathcal S_1)\\
&\geq |\mathcal L_A^+|(1-\tr+\eta/2)k\\
&= |\mathcal L_A|(1-\tr+\eta/2)k \\
&\ge  |\mathcal S_A|(1-r'+\eta/2)k\\
&>  |\mathcal S_A|(1+\eta/2)(1-r')k\;,
\end{align*}
where we used the definition of $\mathcal L^+_A$, Corollary~\ref{comb:LAprops2}~(1), Claim~\ref{comb:basic_props}, Inequality~\eqref{comb:counting_lemma_3}, and the fact that $\tr \le r'$. 

Combining the inequalities gives a contradiction. 
Thus, the set $\mathcal L_A$ has to be empty. 
\end{proof}

From Corollary~\ref{comb:A_is_empty} it follows that all $L$-clusters are in~$\mathcal L_B$ and  thus are matched to $\mathcal S_M$, i.e., $|\mathcal L|=|\mathcal S_M|\le |\mathcal S|$, which contradicts our assumption that $|\mathcal L| > |\mathcal S|$.

\section{Embedding}\label{sec:embedding}


%
%

We call a pair $(F,R)$ an \emph{anchored $\tau$ forest} if $F$ is a forest (possibly consisting of a single tree), $R\subseteq V(F_1)$, where $F_1$ is one of the colour classes of $F$, $F-R$ decomposes into components of size at least two and at most~$\tau$, each component $K$ in $F-R$ is adjacent in~$F$ to at least one and at most two vertices from $R$ and each two vertices in $R$ are of distance at least~$4$. We shall use the notation $K\in F-R$ to denote that the tree~$K$ is one of the components of $F-R$.

First we state a proposition that will allow us to use matching edges in our $r$-skewed LKS-cluster graph to embed part of our tree $T$.

Specifically, in Proposition \ref{prop:embed-SM} we are given an anchored forest~$(F,R)$, an $r$-skewed LKS-graph which contains a cluster~$A$ with some nice average degree to some $L-S$-matching, and an injective mapping of~$R$ on ultratypical vertices of~$A$. We want to extend it to an embedding of~$F$.  
  
\begin{proposition}\label{prop:embed-SM}
For all $\eta,d>0$ and $r\in \mathbb Q^+$, $0<r\le 1/2$, there is an $\varepsilon=\varepsilon(\eta, d,r)>0$ such that for any $\tilde N_{max} \in \mathbb N$ there is a $\beta=\beta(\eta, r, \varepsilon, \tilde N_{max})>0$ such that for all $n\in \mathbb N$ the following holds.
Let $(F, R)$ be an anchored $\beta n$-forest
with colour classes $F_1$ and $F_2$ such that $R\subseteq F_2$ and for each component $K\in F-R$, we have $|F_1\cap K|\le |F_2\cap K|$. 
Let $H$ be an $r$-skewed LKS-graph of order~$n$ with parameters $(\cdot,\cdot,\varepsilon,d)$ with a corresponding cluster graph~$\mathbf H$ of order at most~$\tilde N_{max}$.
Let $U\subseteq V(H)$ and let $\mathbf M\subseteq E(\mathbf H)$ be a matching in $\mathbf H$ between $L$-clusters and $S$-clusters.

If for $A\in V(\mathbf H)$ we have \[\avdeg(A,\mathcal{S}\cap V(\mathbf M) )\ge \frac{1-r}{r}|F_2|+\sum_{C\subseteq  S\::\: CD\in \mathbf M}\max\{|U\cap C|, \frac{1-r}{r}|U\cap D|\} +\eta n\;,\]
then for any injective mapping of~$R$ on ultratypical vertices of~$A$, there is an embedding~$\varphi$ of~$F$ avoiding~$U$ and extending this mapping such that $\varphi(V(F_1))\subseteq S\cap V(\mathbf M)$, $\varphi(V(F_2)\setminus R)\subseteq L\cap \bigcup V(\mathbf M)$, and $V(F_2)$ are mapped on ultratypical vertices\marginpar{zkorigovat dukaz}. Moreover, for any cluster $C\in V(\mathbf H)$ where we embedded vertices from $F-R$ it holds that $|C\setminus (U\cup \varphi(F))|\ge r \eta/8|C|$.
\end{proposition}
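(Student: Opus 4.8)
\textbf{Proof plan for Proposition~\ref{prop:embed-SM}.}

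The plan is to embed the components $K\in F-R$ one at a time, always embedding the class $F_2\cap K$ into $L$-clusters of $\mathbf M$ and the class $F_1\cap K$ into the matched $S$-clusters, using Lemma~\ref{lem:tree-emb} on individual regular pairs $(C,D)$ with $CD\in\mathbf M$. First I would fix the quantitative constants: choose $\varepsilon$ small relative to $\eta,d,r$ so that Lemmas~\ref{lem:subdivide}, \ref{lem:ultratypical} and \ref{lem:tree-emb} all apply with the losses absorbed into $\eta/8$-type slacks, and then choose $\beta$ so small (in terms of $\varepsilon$ and $\tilde N_{max}$) that each component $K$, having at most $\beta n$ vertices, is small compared to $\varepsilon$ times a cluster size — here Proposition~\ref{prop:degrutratypical}(1) bounds cluster sizes from below by roughly $rn/|V(\mathbf H)|\ge rn/\tilde N_{max}$, so $\beta\le \varepsilon r/(100\,\tilde N_{max})$ suffices. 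The root vertices in $R$ are already placed on ultratypical vertices of $A$; by Proposition~\ref{prop:degrutratypical}(2), each such vertex has degree into $\bigcup(\mathcal S\cap V(\mathbf M))$ at least $\avdeg(A,\mathcal S\cap V(\mathbf M))-2\sqrt\varepsilon n/r$, which by the hypothesis is at least $\frac{1-r}{r}|F_2|+\sum_{CD\in\mathbf M}\max\{|U\cap C|,\frac{1-r}{r}|U\cap D|\}+\eta n/2$.

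The embedding proceeds greedily over the components. I maintain the invariant that in every $S$-cluster $C$ with $CD\in\mathbf M$ the set of ``available'' vertices — those not in $U$, not yet used, and ultratypical — still has size at least $r\eta/8\cdot|C|$, and similarly that the unused ultratypical part of each $L$-cluster is large; and that every root $\varphi(\rho)$, $\rho\in R$, still has more than (say) $3\varepsilon|C|$ available neighbours in each matched $S$-cluster $C$ it needs to send descendants into. To embed one component $K$: it attaches to one or two vertices $z_1,(z_2)$ of $R$ (at pairwise distance $\ge 4$ in $F$, so the hypothesis of Lemma~\ref{lem:tree-emb} on non-common-neighbours is met), whose images lie in $A$. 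Pick a matching edge $CD\in\mathbf M$, $C\subseteq S$, $D\subseteq L$, such that $\varphi(z_1)$ (and $\varphi(z_2)$) have many available neighbours in $C$; restrict $(D,C)$ to the still-available ultratypical vertices to get, by Lemma~\ref{lem:subdivide}, an $\varepsilon'$-regular pair of density $\ge d-\varepsilon$ on sets of size $\ge (r\eta/8)|D|,(r\eta/8)|C|$, and apply Lemma~\ref{lem:tree-emb} with $F_1\cap K$ (the smaller class) going into $C$ and $F_2\cap K$ into $D$, prescribing the $\le 2$ neighbours of $R$. This embeds $K$ avoiding $U$ and all previously used vertices, on ultratypical vertices only.

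The crucial point — and the main obstacle — is the counting that guarantees we never run out of room in the $S$-clusters of $\mathbf M$, i.e.\ that a suitable matched pair $CD$ with enough available neighbours of the relevant roots always exists. Summing the demands: the total number of vertices of $F_1$ we must place in $S$-clusters is $|F_1|\le|F_2|$ (since each component has $|F_1\cap K|\le|F_2\cap K|$ and the only $F_2$-vertices outside the components are in $R$, placed in $A$), and each such vertex must sit in a neighbour-of-some-root position; the ``cost'' of a cluster $C$ (with $CD\in\mathbf M$) being partly blocked by $U$ is exactly the $\max\{|U\cap C|,\frac{1-r}{r}|U\cap D|\}$ term — the first alternative accounts for $U$-vertices directly blocking $C$, the second for $U$-vertices in $D$ forcing us to leave proportionally many vertices of $C$ unused so that the regular-pair embedding into $D$ stays feasible. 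Thus the total capacity we need from $\bigcup(\mathcal S\cap V(\mathbf M))$ reachable from the roots is at most $\frac{1-r}{r}|F_2|+\sum_{CD\in\mathbf M}\max\{|U\cap C|,\frac{1-r}{r}|U\cap D|\}$ vertices (the factor $\frac{1-r}{r}$ converting $L$-side counts to $S$-side degree via the skew identity $r\cdot\avdeg(A,S_j)=\dots$ in the definition), plus $O(\sqrt\varepsilon n+ \beta n|V(\mathbf H)|)$ error from non-ultratypicality, leftover vertices, and the $r\eta/8$ reserve kept in each cluster; all of these errors fit inside the remaining $\eta n/2$ slack. Hence at each step the root images still have available neighbours in some matched $S$-cluster with enough unused capacity, Lemma~\ref{lem:tree-emb} applies, and after all components are embedded the final ``moreover'' clause $|C\setminus(U\cup\varphi(F))|\ge r\eta/8\,|C|$ holds by the maintained invariant. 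I would organise the write-up as: (i) choice of constants; (ii) statement of the invariant; (iii) the single-component embedding step via Lemma~\ref{lem:tree-emb}; (iv) the global count showing the invariant is preserved; (v) conclusion.
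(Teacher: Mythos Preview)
Your plan is correct and follows essentially the same route as the paper: embed the components $K\in F-R$ one at a time into matched pairs $CD\in\mathbf M$ via Lemma~\ref{lem:tree-emb}, using ultratypicality of the images of $R$ together with a pigeonhole over the $S$-clusters to locate a pair with enough free room, and maintain an invariant that absorbs the used vertices into the $\max\{|U\cap C|,\frac{1-r}{r}|U\cap D|\}$ term while the $\frac{1-r}{r}|F_2|$ term shrinks; your reading of why the $\max$ term has that shape is exactly right. Two minor remarks: you do not need Lemma~\ref{lem:subdivide} here, since Lemma~\ref{lem:tree-emb} already works on arbitrary large subsets $X',Y'$; and the invariant is most cleanly stated (as the paper does) as the original degree hypothesis with $U$ replaced by $U\cup\tilde U$ and $|F_2|$ replaced by $|F_2|-|\varphi(F_2)|$, since then preservation follows directly from $|F_1\cap K|\le|F_2\cap K|$ and $r\le 1/2$.
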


Next, we state a proposition allowing us to use high average degree of some clusters to embed further part of our tree~$T$. 

Specifically, in Proposition \ref{prop:embed-LS} we are given an anchored forest~$(F,R)$, an $r$-skewed LKS-graph which contains a cluster~$A$ with big enough average degree to a set of clusters with high average degree, and an injective mapping of~$R$ on ultratypical vertices of~$A$. We want to extend it to an embedding of~$F$.

When using the proposition, we always set $\mathcal B$ to be the set of $L$-clusters in (1) and the set of $S_1$-clusters in (2).


\begin{proposition}\label{prop:embed-LS}
For all $\eta,d >0$ and $0<r\le 1/2$, there is an $\varepsilon=\varepsilon(\eta,d,r)>0$  such that for any $\tilde N_{max} \in \mathbb N$ there is a $\beta=\beta(\eta, r, \varepsilon,\tilde N_{max})>0$ such that for all $n\in \mathbb N$ the following holds. 
Let $(F, R)$ be an anchored $\beta n$-forest with colour classes $F_1$ and $F_2$ such that $R\subseteq F_2$.
Let $H$ be an $r$-skewed LKS-cluster graph with parameters $(\cdot ,\cdot,\varepsilon,d)$ of order~$n$ with an associated cluster graph $\mathbf H$ of order at most~$\tilde N_{max}$. Let $U\subseteq V(H)$ and let $\mathcal B\subseteq V(\mathbf H)$ be a set of clusters. Let $\varphi: R\rightarrow A$ with $A\in V(\mathbf H)$ be an injective mapping on ultratypical vertices.

\begin{enumerate}
\item 
If 
$\avdeg(A, \mathcal B)\ge |F_1|+ |\bigcup \mathcal B\cap U|+\eta n$,
then we can extend $\varphi$ to $N(R)$ so that $\varphi(N(R))$ are ultratypical vertices in $\bigcup \mathcal B\setminus U$ and find a set $W=W_1\dot\cup W_2\dot\cup \ldots\subseteq \bigcup \mathcal B\setminus (U\cup \varphi(R\cup N(R)))$ of reserved vertices such that $|W_i|=|(F_1\cap K_i)\setminus N(R)|$, with $K_i\in F-R$ and such that~$W_i$ lies in the same cluster as $\varphi(K_i\cap N(R))$ and for each cluster $C\in \mathcal B$ with $C\cap \varphi(N(R))\neq \emptyset$ we have $|C\setminus (U\cup W\cup \varphi(N(R)))|\ge r\eta /8\cdot |C|$ .

Moreover, for any set $\tilde U\subseteq V(G)\setminus (U\cup W\cup \varphi(R\cup N(R)))$, for which  $\avdeg(B)\ge |F_1|+|F_2|+|U\cup \tilde U|+\eta n$ for each $B\in \mathcal B$ and such that for any $C\in V(\mathbf H)$ with $C\cap \tilde U\neq\emptyset$ we have $|C\setminus (U\cup W\cup \tilde U\cup \varphi(N(R)))|\ge r\eta /8\cdot |C|$,  we can further extend~$\varphi$ to the whole~$F$ avoiding $U\cup \tilde U$ such that $\varphi(F_1)\subseteq \bigcup \mathcal B$. Moreover, the extension~$\varphi$ is such that for any cluster $C\in V(\mathbf H)$ with $C\cap \varphi(F-(R\cup N(R))\neq\emptyset$, we have  $|C\setminus (\tilde U\cup U)|\ge  r\eta/8\cdot |C|$.
\item If  $\avdeg(A, \mathcal B)\ge |F_1|+|\bigcup \mathcal B\cap U|+\eta n$ and 
$\avdeg(B, V(\mathbf H)\setminus \mathcal B)\ge |F_2|+|U|+\eta n$ for each $B \in \mathcal B$, then we can extend~$\varphi$ to~$F$ in~$V(G)$  avoiding~$U$ and such that 
 $\varphi(V(F_1))\subseteq \bigcup \mathcal B$, $\varphi(V(F_2))\subseteq \bigcup N_{\mathbf H}(\mathcal B)\setminus \mathcal B$, and $V(F_2)$ are mapped on ultratypical vertices. Moreover, the embedding~$\varphi$ is such that for any cluster $C\in V(\mathbf H)$ with $C\cap \varphi(F-R)\neq\emptyset$, we have  $|C\setminus (\varphi(F)\cup U)|\ge r \eta/8\cdot |C|$.
\end{enumerate}
\end{proposition}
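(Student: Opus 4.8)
The plan is to embed~$F$ one component of~$F-R$ at a time, deciding first --- by a max-flow argument --- which cluster of~$\mathcal B$ accommodates the colour class~$F_1$ of each component, and then filling each component in greedily in the spirit of Lemma~\ref{lem:tree-emb}, its $F_2$-part spreading over clusters adjacent to its $F_1$-cluster. Throughout I leave a $\frac{r\eta}8$-fraction of every cluster permanently untouched; this is affordable because every degree hypothesis has an additive $\eta n$ of slack and $r\le\frac12$, and it is precisely what yields the ``moreover'' clauses. For the parameters, $\varepsilon=\varepsilon(\eta,d,r)$ is chosen so small that $\sqrt\varepsilon/r\ll\eta$ and that any regular pair of density $\ge d$ fed to Lemma~\ref{lem:tree-emb} with $\alpha:=d/4$ still works after each side is cut to a $\frac{r\eta}8$-fraction; then $\beta=\beta(\eta,r,\varepsilon,\tilde N_{max})$ is chosen so that $\beta n\ll\varepsilon\min_C|C|$ and $\beta n\cdot\tilde N_{max}\ll\eta n$, making per-component consumption and rounding errors negligible.

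\emph{Assignment and embedding.} Since $\varphi$ maps~$R$ onto ultratypical vertices of~$A$, Proposition~\ref{prop:degrutratypical}(2) gives $\deg(\varphi(x),\bigcup\mathcal B)\ge\avdeg(A,\mathcal B)-2\sqrt\varepsilon n/r$ for every $x\in R$, which exceeds $|F_1|+|(\bigcup\mathcal B)\cap U|+\frac{\eta n}2$. Form the network with a source feeding $|K\cap F_1|$ units into each component $K\in F-R$, with $K$ joined to every $C\in\mathcal B$ that forms a positive-density regular pair with~$A$ and to which $\varphi(x)$ is typical for all $x\in R$ incident to~$K$, and with each $C\in\mathcal B$ feeding the sink at capacity $(1-\frac{r\eta}8)|C|-|C\cap U|$; for any family of components the clusters it reaches already contain $N(\varphi(x))\cap\bigcup\mathcal B$ for one incident~$x$ minus the $\le\sqrt\varepsilon\tilde N_{max}$ clusters non-typical to a second anchor (total size $\le\sqrt\varepsilon n/r$ by Lemma~\ref{lem:ultratypical} and Proposition~\ref{prop:degrutratypical}(1)), so the min cut is $|F_1|$ and an integral saturating flow exists, which after rounding assigns each~$K$ a compatible home cluster $C(K)\in\mathcal B$ with $\sum_{K\,:\,C(K)=C}|K\cap F_1|\le(1-\frac{r\eta}8)|C|-|C\cap U|$. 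Now embed each~$K$: first put its seeds $K\cap N(R)\subseteq F_1$ on ultratypical still-free vertices of~$C(K)$ lying in the appropriate $N(\varphi(x))$'s --- possible since $\varphi(x)$ is typical with respect to $C(K)$ --- and then proceed level by level, placing each $F_2$-vertex on an ultratypical still-free vertex, typical with respect to~$C(K)$, in a cluster adjacent to~$C(K)$ in which its already-embedded neighbour still has free neighbours, and each $F_1$-vertex in~$C(K)$ on a free neighbour of its parent. In part~(2) the $F_2$-vertices are additionally forced into $N_{\mathbf H}(\mathcal B)\setminus\mathcal B$, which the hypothesis $\avdeg(B,V(\mathbf H)\setminus\mathcal B)\ge|F_2|+|U|+\eta n$ (bounding $|\bigcup(N_{\mathbf H}(B)\setminus\mathcal B)|$ from below) makes possible; in part~(1) the $F_2$-budget comes instead from $\avdeg(B)\ge|F_1|+|F_2|+|U\cup\tilde U|+\eta n$ and an $F_2$-vertex may land in any cluster adjacent to $C(K)$. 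For a two-seed component with seeds $s_1,s_2$: if $\dist_K(s_1,s_2)\ge4$ --- automatic once the two anchors in~$R$ are at distance $\ge6$ --- they have no common neighbour and the two-endpoint case of Lemma~\ref{lem:tree-emb} embeds the $s_1$--$s_2$ part; if $\dist_K(s_1,s_2)=2$ I first embed their common neighbour $w\in K\cap F_2$ onto a vertex of a cluster adjacent to $C(K)$ with neighbours in both $N(\varphi(x_1))$ and $N(\varphi(x_2))$ (regularity makes almost every vertex such), then pick $\varphi(s_1),\varphi(s_2)$ among those neighbours in $C(K)$, and finish the rooted subtrees of $K-w$ greedily. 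At each step there are enough candidates by the degree budget and the $\frac{r\eta}8$-reservation, and no cluster overflows by the capacity bounds and the choice of~$\beta$.

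\emph{The two phases of part~(1).} Here the embedding is split because~$\tilde U$ is revealed only after~$W$ is returned. In the first phase I run the flow above, place only the seeds $N(R)$ in their home clusters, and reserve inside $C(K)$ a set $W_K$ of $|(K\cap F_1)\setminus N(R)|$ further still-free ultratypical vertices, disjoint from~$U$, from $\varphi(R\cup N(R))$, and from the other $W_{K'}$ --- available by the same capacity bound --- with $W$ their union. In the second phase, given~$\tilde U$ which meets every cluster it touches in at most a $(1-\frac{r\eta}8)$-fraction beyond $U\cup W\cup\varphi(N(R))$, I complete each component by the level-by-level procedure above, routing $(K\cap F_1)\setminus N(R)$ into the space vacated for it inside $C(K)$ and $K\cap F_2$ into clusters adjacent to $C(K)$, everything avoiding $U\cup\tilde U$; the point is that the seeds, being ultratypical, still have large free degree into the clusters needed, so the placement never gets stuck, and the final free-fraction claims hold by the reservation.

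\emph{Main obstacle.} The conceptual content --- a flow for the cluster assignment plus a greedy local embedding refining Lemma~\ref{lem:tree-emb} --- is routine; the work is in the accounting. The three delicate points are: (a)~in part~(1), guaranteeing that the reserve~$W$, committed before~$\tilde U$ is seen, is simultaneously large enough and so placed that the second phase can still complete --- which forces the two phases to share one capacity ledger and forces one to use only that the seeds are ultratypical, never typicality with respect to sets chosen later; (b)~two-seed components, where one needs a cluster typical to \emph{both} anchor-images and must handle the $\dist=2$ case excluded by the hypothesis of Lemma~\ref{lem:tree-emb}; and (c)~carrying the $\frac{r\eta}8$-slack through every capacity constraint at once --- $U$, $\tilde U$, the reserves~$W$, and the vertices already used by earlier components --- which is exactly what pins down the additive constants in the statement.
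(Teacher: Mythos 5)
Your proposal is correct in substance and shares the paper's essential skeleton --- anchor each component's seeds in a cluster of $\mathcal B$ to which the anchor images are typical, reserve $|(F_1\cap K)\setminus N(R)|$ vertices in that same cluster, keep an $r\eta/8$-fraction of every touched cluster free at all times, and run part~(1) in two phases with the ledger $|W|+|\varphi(F_1\text{ so far})|\le|F_1|$ --- but it differs from the paper's proof in two genuine ways. First, you assign home clusters globally by a max-flow/Hall argument with capacities $(1-\frac{r\eta}8)|C|-|C\cap U|$, whereas the paper processes components sequentially and finds the home cluster by averaging: the anchor image, being ultratypical, has degree at least $\avdeg(A,\mathcal B)-3\sqrt\varepsilon n/r$ into the clusters of $\mathcal B'$ typical to \emph{both} anchors, so some single cluster retains a surplus of $3\eta n/(4|\mathcal B'|)>\beta n+\frac{r\eta}{2}|B|$ free vertices. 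The sequential version buys you integrality for free; your flow must still be rounded so that each component (whose seeds and reservation the statement forces into one cluster) lands wholly in one cluster, and the resulting per-cluster overflow of up to $\tilde N_{max}\cdot\beta n$ has to be absorbed by choosing $\beta\lesssim r^2\eta/\tilde N_{max}^2$ --- admissible, but not pinned down in your sketch. Second, you spread each component's $F_2$-part level by level over several clusters adjacent to $C(K)$, which amounts to re-proving a multi-cluster variant of Lemma~\ref{lem:tree-emb}; the paper instead locates, via the ultratypicality of the two seed images and the budget $\avdeg(B)\ge|F_1|+|F_2|+|U\cup\tilde U|+\eta n$, a \emph{single} cluster $D$ with $\deg(u,D\setminus\cdots)\ge|K\cap F_2|+\frac{r\eta}8|D|$ for both seeds $u,v$, and then invokes Lemma~\ref{lem:tree-emb} verbatim on the pair $(C(K),D)$. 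Your route works but needs one more word: the intermediate $F_1$-vertices must themselves be chosen ultratypical (not merely ``free neighbours of the parent''), or their $F_2$-children cannot be routed onward; this is available since at most $\sqrt\varepsilon|C(K)|$ candidates fail to be ultratypical. On the credit side, your explicit treatment of two-seed components whose seeds are at distance~$2$ addresses a case that the paper's direct appeal to Lemma~\ref{lem:tree-emb} (which forbids the two prescribed vertices from sharing a neighbour) silently skips.
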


We at first prove Proposition \ref{prop:embed-SM}.

\begin{proof}[Proof of Proposition~\ref{prop:embed-SM}]
Given $\eta,d>0$ and $r\in \mathbb Q$ set  $\varepsilon=\min\{(\frac{\eta r}{12})^2, \frac{d r\eta}{100}\}$.
For any  $\tilde N_{max}\in \mathbb N$ set $\beta=\frac{\varepsilon r\eta}{4\tilde N_{max}}$.

We shall define a set $\tilde U$ of vertices used for the embedding process. At the beginning $\tilde U=\varphi(R)$. 
At any time of the embedding process, let $\varphi$ be the partial embedding of $F$. We shall embed one by one each component $K\in F-R$. The embedding~$\varphi$ will be defined in such a way that $\varphi(K\cap F_1)\subseteq S$ and $\varphi(K\cap F_2\setminus R)\subseteq L$.
During the whole embedding process, we shall ensure that the following holds
\begin{equation}
    \avdeg (A, \mathcal S\cap V(\mathbf M))\ge \frac{1-r}{r}(|F_2| - |\varphi(F_2)|)+\sum_{C\subseteq S\::\: CD\in \mathbf M}\max\{|(U\cup \tilde U)\cap C|,\frac{1-r}{r}|(U\cup \tilde U)\cap D|\}+\eta n\;.
\end{equation}
This holds at the beginning when $\tilde U=R$. 

 For each next $K\in F-R$ to be embedded, let $R_K$ be the vertices in $R$ adjacent to $K$ (at least one, at most two). Let $\mathcal S'\subseteq \mathcal S \cap V(\mathbf M)$ be such that both~$\varphi(R_K)$ are typical to each cluster $C\in \mathcal S'$. By Lemma~\ref{lem:ultratypical} we have that $|\mathcal S \cap \mathbf M|-|\mathcal S'| \leq 2\sqrt{\varepsilon}|V(\mathbf H)|$ and thus similarly as in the proof of Proposition~\ref{prop:degrutratypical} we can calculate for $x_i\in R_K$, $i=1,2$ that $\deg(\varphi(x_i), \bigcup \mathcal S')\ge \avdeg(A, \mathcal S\cap V(\mathbf M))-3\sqrt{\varepsilon}n/r$ 
 and thus 
\begin{align*}
    \deg(\varphi(x_i), \bigcup \mathcal S')
    &\ge  \frac{1-r}{r}(|F_2| - |\varphi(F_2)|)+\sum_{C\subseteq \mathcal S\::\:CD\in \mathbf M}\frac{1-r}{r}|\tilde U\cap D|\\
    &+\sum_{C\subseteq \mathcal S\::\:CD\in \mathbf M}\max\{|U\cap C|,\frac{1-r}{r}|U \cap D|\}+\eta n - 3\sqrt{\varepsilon}n/r\\
    &\ge \frac{1-r}{r}(|F_2| - |\varphi(F_2)|)+ \sum_{C\subseteq \mathcal S\::\: CD\in \mathbf M}\max\{|(U\cup \tilde U)\cap C|,\frac{1-r}{r}|(U\cup \tilde U)\cap D|\}+3\eta n/4\;\\
    &\ge \sum_{C\subseteq \mathcal S'\::\:  CD\in \mathbf M} \left( \max\{|(U\cup \tilde U)\cap C|,\frac{1-r}{r}|(U\cup \tilde U)\cap D|\}+3\eta n/(4|\mathcal S'|) \right)\;.
\end{align*}
Then there is a $C\in \mathcal S'$ with $CD\in \mathbf M$ such that 
\begin{align*}
    \deg(\phi(x_i), C) \ge \max\{|(U\cup \tilde U)\cap C|,\frac{1-r}{r}|(U\cup \tilde U)\cap D|\}+3\eta n/(4|\mathcal S'|)\;.
\end{align*}
Thus, 
\begin{align}
\nonumber|C|-\max\{|(\tilde U\cup U)\cap C|, \frac{1-r}{r}|(\tilde U\cup U)\cap D|\}
&\ge \deg(\varphi(x_i, C)) -\max\{|(\tilde U\cup U)\cap C|, \frac{1-r}{r}|(\tilde U\cup U)\cap D|\}\\
\nonumber&\ge 3\eta n/(4|\mathcal S'|)\\
\nonumber& \ge \frac{1-r}{r}\beta n+\eta n/(2|V(\mathbf H)|)\\
&\ge |F_1\cap K|+\eta r |C| / 2\;,\label{al:zbytekC}
\end{align}
where the third inequality follows from the definition of $\beta$ and the last inequality follows from Proposition \ref{prop:degrutratypical} (1). Similarly we have
\begin{align}\nonumber|D\setminus (\tilde U\cup U)|&\ge \frac{r}{1-r}(|C|-\max\{|(\tilde U\cup U)\cap C|, \frac{1-r}{r}|(\tilde U\cup U)\cap D|\})\\
\nonumber &\ge \beta n+\frac{r}{1-r}\eta n/(2|V(\mathbf H)|)\\ 
&\ge |F_2\cap K|+ \eta r |D| / 2\;,\label{al:zbytekD}
\end{align}
where we again use the definition of $\beta$ and Proposition~\ref{prop:degrutratypical} (1).

In particular, in the neighbourhood of each vertex $u_i\in \varphi(R_K)$, $i=1,2$, there are at least $|F_1\cap K|$ unused  vertices of $C\setminus U$ that are typical w.r.t. $D\setminus (\tilde U\cup U)$. Let $\varphi(N(x_i)\cap K)=v_i$, $i=1,2$, be  such vertices. 
Hence,  
\[
\deg(v_i, D\setminus (\tilde U\cup U))\ge (d-\varepsilon)|D\setminus (\tilde U\cup U)|\ge (d-\epsilon)r\eta|D| / 8 > 3\varepsilon |D|
\]
for $i=1,2$.
Observe that $|K|\le \beta n< \frac{\varepsilon rn}{|V(\mathbf H)|}\le \varepsilon \min\{|C|,|D|\}$.
We can thus use Lemma~\ref{lem:tree-emb} with $T_{L\ref{lem:tree-emb}}:= K$, $X'_{L\ref{lem:tree-emb}}:=  C\setminus (\tilde U\cup U)$, $Y'_{L\ref{lem:tree-emb}}:=  D\setminus (\tilde U\cup U)$, $R_{L\ref{lem:tree-emb}}:= \{N(x_i),\; i=1,2\}$ $\varepsilon_{L\ref{lem:tree-emb}}:= \varepsilon$, $\alpha_{L\ref{lem:tree-emb}}:=\frac{16\varepsilon}{\eta r}$, and $d_{L\ref{lem:tree-emb}}:= d$ to embed~$K$ in $C\cup D$
with $\varphi(F_1\cap K)\subseteq C\setminus (\tilde U\cup U)\subseteq \mathcal S$ and $\varphi(F_2\cap K\setminus R)\subseteq D\setminus (\tilde U\cup U)\subseteq \mathcal L$.
Add~$\varphi(K)$ to~$\tilde U$. 
From~\eqref{al:zbytekC} and~\eqref{al:zbytekD}, we now have that $|C\setminus (\tilde U\cup U)|\ge  r\eta/8|C|$, and  $|D\setminus (\tilde U\cup U)|\ge  r\eta/8|D|$. Observe also that for the partial embedding~$\varphi$ we have
\begin{align*}
\avdeg(A,\mathcal S\cap V(\mathbf M) )&\ge \frac{1-r}{r}|F_2|+\sum_{C\subseteq  \mathcal S\::\: CD\in \mathbf M}\max\{|U\cap C|, \frac{1-r}{r}|U\cap D|\} +\eta n\\
    &\ge \frac{1-r}{r}((|F_2| - |\varphi(F_2)|)+|\tilde U\cap L|)+\sum_{C\subseteq \mathcal S\::\: CD\in \mathbf M}\max\{|U\cap C|, \frac{1-r}{r}|U\cap D|\}+\eta n\\
    &\ge  \frac{1-r}{r}(|F_2| - |\varphi(F_2)|)+\sum_{C\subseteq \mathcal S\::\: CD\in \mathbf M}\max\{|(U\cup\tilde U)\cap C|, \frac{1-r}{r}|(U\cup\tilde U)\cap D|\}+\eta n\;,
\end{align*}
where the last inequality comes from the fact that $|F_1\cap K|\le |F_2\cap K|$ for all $K\in F-R$, and that the embedding $\varphi$ was defined in such a way that $\varphi(F_1)\subseteq \mathcal S$ and $\varphi(F_2\setminus R)\subseteq \mathcal L$.

Proceeding in the same way for every $K\in F-R$, we extend $\varphi(R)$ to the whole anchored forest~$F$ in such a way that $\varphi(F_1)\subseteq \mathcal S\cap V(\mathbf M)$, $\varphi(F_2\setminus R)\subseteq \mathcal L\cap V(\mathbf M)$, and for each cluster $C\in V(\mathbf H)$ with $C\cap \varphi(F-R)\neq \emptyset$ we have  $|C\setminus (\tilde U\cup U)|\ge  r\eta/8|C|$.

\end{proof}

We conclude this section by proving Proposition \ref{prop:embed-LS}. 

\begin{proof}[Proof of Proposition~\ref{prop:embed-LS}]Given~$\eta,d>0$ and $r\in \mathbb Q^+$, let $\varepsilon:=\min\{\left(\frac{\eta r}{12}\right)^2,\frac{dr\eta}{100}\}$. Then for any $\tilde N_{max}\in \mathbb N$, set $\beta =\frac{r\eta\varepsilon}{4\tilde N_{max}}$. 

We shall prove only the more difficult Case~(1). Case~(2) can be proven either analogously, or can be much simplified as $F_2$ will be mapped outside of $\mathcal B$ and thus does not need any reservation or cause any difficulties in embedding $F_1$.

We define a set~$W=W_1\cup W_2 \cup \ldots$ of reserved vertices by setting $W=\emptyset$ at the beginning and progressively adding vertices to it. Also we shall define the set $\tilde W$ as the set of vertices used by the partial embedding of $F-R$. Hence at the beginning we have $\tilde W =\emptyset$.  
Suppose that for some $s$, we have already embedded~$K_j\in F-R$, for $j\leq s$. Suppose that $W=W_1\cup \cdots \cup W_s$ is the corresponding set of reserved vertices, i.e., $|W\cup \tilde W|= \sum_{j=1}^s |K_j| $.
For the next component $K_{s+1}\in F-R$ to be embedded, let $R_{s+1}$ be the set of vertices in $R$ adjacent to $K_{s+1}$ (at least one, at most two). 
Let $\mathcal B'\subseteq \mathcal B$ be such that $\varphi(R_{s+1})$ are typical to each cluster $C\in \mathcal B'$. By Lemma~\ref{lem:ultratypical} we have that $|\mathcal B\setminus \mathcal B'| \le 2\sqrt{\varepsilon}|V(\mathbf H)|$ and thus similarly as in Proposition \ref{prop:degrutratypical} we get for $x\in R_{s+1}$ that
\begin{align*}
\deg (\varphi(x), \bigcup \mathcal B')
&\ge \avdeg(A, \mathcal B) - 3\sqrt{\varepsilon}n/r\\
&\ge \sum_{j=1}^{s+1}|K_j\cap F_1|+|\bigcup \mathcal B \cap U|+\eta n - 3\sqrt{\varepsilon}n/r\\
&\ge |\bigcup_{j=1}^sW_j|+|\tilde W|+|K_{s+1}\cap F_1|+|\bigcup \mathcal B \cap U|+3\eta n/4\\
&\ge |K_{s+1}\cap F_1|+|W|+|\tilde W|+|\bigcup \mathcal B \cap U|+3\eta n/4,  \end{align*}

Hence there is a cluster $B\in \mathcal B'$ (not depending on the choice of vertex~$x$ in~$R_{s+1}$) such that 
\[
    \deg(\varphi(x),B\setminus (U\cup W\cup \tilde W))
    \ge 3\eta n/(4|\mathcal B'|) 
    \ge \frac{\eta n}{4\tilde N_{max}}+\frac {\eta n}{2|V(\mathbf H)|}
    > \beta n+\eta r /2\cdot|B|.
\]

In particular, in the neighbourhood of each vertex of $R_{s+1}$ there are at least $|K_{s+1}|$ unused and unreserved ultratypical vertices in $B\setminus U$. For each $x\in R_{s+1}$, map its neighbor in $K_{s+1}$ to one of  these vertices and add the image to $\tilde W$. Choose a set of vertices of size $|K_{s+1}\cap F_1\setminus N(R)|$ in $B\setminus (U\cup W\cup \tilde W)$  and add it to $W_{s+1}$ (i.e., also to $W$). Observe that $|B\setminus (U\cup W\cup \tilde W)|\geq \eta r /8\cdot|B|$. We proceed in the same way for every $K\in F-R$. 

When we have embedded $N(R)\cap K$ of the last component $K\in F-R$, we have obtained an embedding of $N(R)$ and a reservation set $W=W_1\cup  W_2\cup \ldots$ for $F_1\setminus N(R)$ such that~$W_j$ lies in the same cluster as $\varphi(N(R)\cap K_j)$ does, and in such a way, that for any cluster $B$ where we embedded vertices from~$N(R)$ (and possibly reserved space), we still have at least some unused and unreserved vertices, i.e., 
$|B\setminus (U\cup W\cup \tilde W)|\ge \eta r /8\cdot|B|$. 

Now we shall proceed with the 'moreover' part, i.e., the embedding the left-over of the trees $K_j\in F-R$.  Let $u,v$ in cluster~$B$ be the images of $K_j\cap N(R)$ (alternatively there is only one such image). Set $W_j=\emptyset$ (and thus remove from~$W$ a set of vertices of the size $|K_j\cap F_1\setminus N(R)|$). Similarly as above, we find $D\in V(\mathbf H)$ such that 

\[\deg(u, D\setminus (U\cup W\cup \tilde W\cup \tilde U)\ge |K_j\cap F_2|+\eta r /8\cdot |D|,\]
and similarly
\[\deg(v, D\setminus (U\cup W\cup \tilde W\cup\tilde U)\ge |K_j\cap F_2|+\eta r /8\cdot |D|.\]

 As we have  $|B\setminus (U\cup W\cup  \tilde W\cup \tilde U)|\ge |K_j\cap F_1\setminus N(R)|+r\eta /8\cdot |B|$ and  $r\eta/8>3\varepsilon$, we may use Lemma~\ref{lem:tree-emb} with $T_{L\ref{lem:tree-emb}}:=K_j$, $R_{L\ref{lem:tree-emb}}:=K_j\cap N(R)$, $X'_{L\ref{lem:tree-emb}}:= B\setminus (U\cup W\cup \tilde W\cup \tilde U)$, $Y'_{L\ref{lem:tree-emb}}:= D\setminus (U\cup W\cup \tilde U\cup \tilde W)$, $\alpha_{L\ref{lem:tree-emb}}:=\frac{32\varepsilon}{r\eta}$,  $\varepsilon_{L\ref{lem:tree-emb}}:=\varepsilon$, and $d_{L\ref{lem:tree-emb}}:=d$ to extend~$\varphi $ to the whole~$K_j$ with $F_1\cap K_j\subseteq B$ and $F_2\cap K_j\subseteq D$ and add the used vertices to $\tilde W$. Observe that after the embedding of $K_j$, we still have in each cluster~$B$ and~$D$ at least $r\eta /8\cdot|B|$ and $r\eta /8\cdot|D|$ vertices, respectively, outside~$U$,~$W$,~$\tilde U$, and~$\tilde W$. We continue until every $K\in F-R$ is embedded.
\end{proof}


\section{Proof of Proposition \ref{prop:mainembedding2}}\label{sec:main_prop}
Given $\delta, q, d>0$ and $\tilde r\le r'\le 1/2$ set \begin{align*}
\varepsilon&:= \min\left\{\varepsilon_{P\ref{prop:embed-SM}}(\frac{q\delta}{20}, d, r'),\varepsilon_{P\ref{prop:embed-LS}}(\frac{q\delta}{20}, d, r'), \left(\frac{\delta q}{3}\right)^2, d/17\right\},\\
\beta&:= \min\left\{\beta_{P\ref{prop:embed-SM}}(\frac{q\delta}{20}, r', \varepsilon, \tilde N_{max}), \beta_{P\ref{prop:embed-LS}}(\frac{q\delta}{20}, r', \varepsilon, \tilde N_{max}),\delta r'/8\right\},\\
n_0&:= \frac{200}{\delta qr' \beta}.
\end{align*}

We gradually construct an injective homomorphism~$\phi$ of~$T$ into~$H$. To this end we consider the four introduced cases. 

In each case, we start by embedding the vertices of~$W_A$ and~$W_B$
to ultratypical vertices of~$A$ and~$B$, respectively. 
This can be done by applying Lemma~\ref{lem:tree-emb} with $X'_{L\ref{lem:tree-emb}}$ and $Y'_{L\ref{lem:tree-emb}}$ being the sets of ultratypical vertices of~$A$ and~$B$, respectively, $T_{L\ref{lem:tree-emb}}$ being any tree with colour classes~$W_A$ and~$W_B$ such that $T[W_A\cup W_B]$ is a subgraph of  $T_{L\ref{lem:tree-emb}}$, $\alpha_{L\ref{lem:tree-emb}}=5\varepsilon$ and  $R_{L\ref{lem:tree-emb}}=\emptyset$. 
Note that the assumptions of Lemma~\ref{lem:tree-emb} are satisfied, since the pair $(A,B)$ has density at least $d-\varepsilon>15\varepsilon$, by Lemma~\ref{lem:ultratypical} at least $1-\sqrt{\varepsilon}>4/5$ of vertices of~$A$ or~$B$, respectively, are ultratypical, and moreover |$W_A|<\varepsilon |A|$, $|W_B|<\varepsilon |B|$ by definition of fine partition.


We embed the rest of the tree~$T$ using different strategy for each case. In what follows, we use indexes $1$ and $2$ to denote that the structure is a substructure of~$T_1$ or~$T_2$, respectively.

When using Propositions~\ref{prop:embed-SM} and~\ref{prop:embed-LS}, we shall always use (here we use the index $_P$ to indicate the parameter of the propositions) $d_{P}:= d$, $r_P := r'$, $\tilde N_{max, P}:= \tilde N_{max}$, $n_P:=n$, $H_P:=H$,  $\mathbf H_P:=\mathbf H$, and $R_P$ will be either $W_A$ or $W_B$ depending whether we embed part of $\mathcal D_A$, or $\mathcal D_B$, respectively. 
In some cases, we shall use Proposition~\ref{prop:embed-LS} several times. To avoid confusion, we shall use upper indices in parenthesis, e.g., $U^{(1)}_{P\ref{prop:embed-LS}}$, to indicate to which application of the proposition we refer.
 We will write $\fD_{B1}$ as a shortcut for $\fD_B \cap V(T_1)$ and $\fD_{B2} := \fD_B \cap V(T_2)$ and $\fD_{A1}$ as a shortcut for $\fD_A \cap V(T_2)$ (sic) and $\fD_{A2} := \fD_A \cap V(T_1)$. Thus, neighbours of $W_A$ or $W_B$ are in $\fD_{A1}$ or $\fD_{A2}$, respectively.

\subsection*{Case A}
In this case we assume that there are two adjacent clusters~$A$ and~$B$ in~$H$ such that $\avdeg(A,\fS_1\cup \fS_M) \ge \frac{1-\tr}{\tr} \atwo + \delta k$ and $\avdeg(B,\mathcal L)\ge (\tr+\delta)k$. 

We start by embedding the vertices of $W_A$ and $W_B$ to ultratypical vertices of clusters $A$ and $B$, respectively. We then further partition the rest of $T$ and embed it in the following three steps which we describe in detail later. We partition the trees from $\fD_A$ in two sets -- $\fF$ and $\fG$ and define $\fF'$ and $\fG'$ as sets of subtrees of $\fF$ and $\fG$, respectively, with leaves in $\fD_{A1}$ removed. We denote $\fF\cap \fD_{Ai}$ and $\fG\cap \fD_{Ai}$ by $\fF_i$ and $\fG_i$ respectively, for $i=1,2$. Analogously, we define $\fF'_i$ and $\fG'_i$ for $i=1,2$.

In the first step, we embed $\fF'$ into the edges of the matching $\mathbf{M}$ using Proposition \ref{prop:embed-SM} and we embed $\fG'$ through $\fS_1$ vertices using Proposition \ref{prop:embed-LS} (i.e., $\phi(\fG'_1) \subseteq \fS_1$ and $\phi(\fG'_2) \subseteq \fL$).  

In the second step, we embed the trees from $\fD_{B1}$ using again Proposition \ref{prop:embed-LS}. To this end we again use the bound on the degree of the cluster $B$ -- specifically, as $\avdeg(B, \fL) \ge \tilde rk + \delta k = |\fD_{A2} \cup \fD_{B1}| + \delta k$, the cluster $B$ has enough neighbours for embedding $\fD_{B1}$, even though $\fD_{A2}$ is already embedded. 

In the third step, we embed $\fF\setminus \fF'$ and $\fG\setminus \fG'$ greedily. The structure of the embedded tree is sketched in Figure \ref{fig:caseA}.

\begin{figure}
    \centering
    \includegraphics{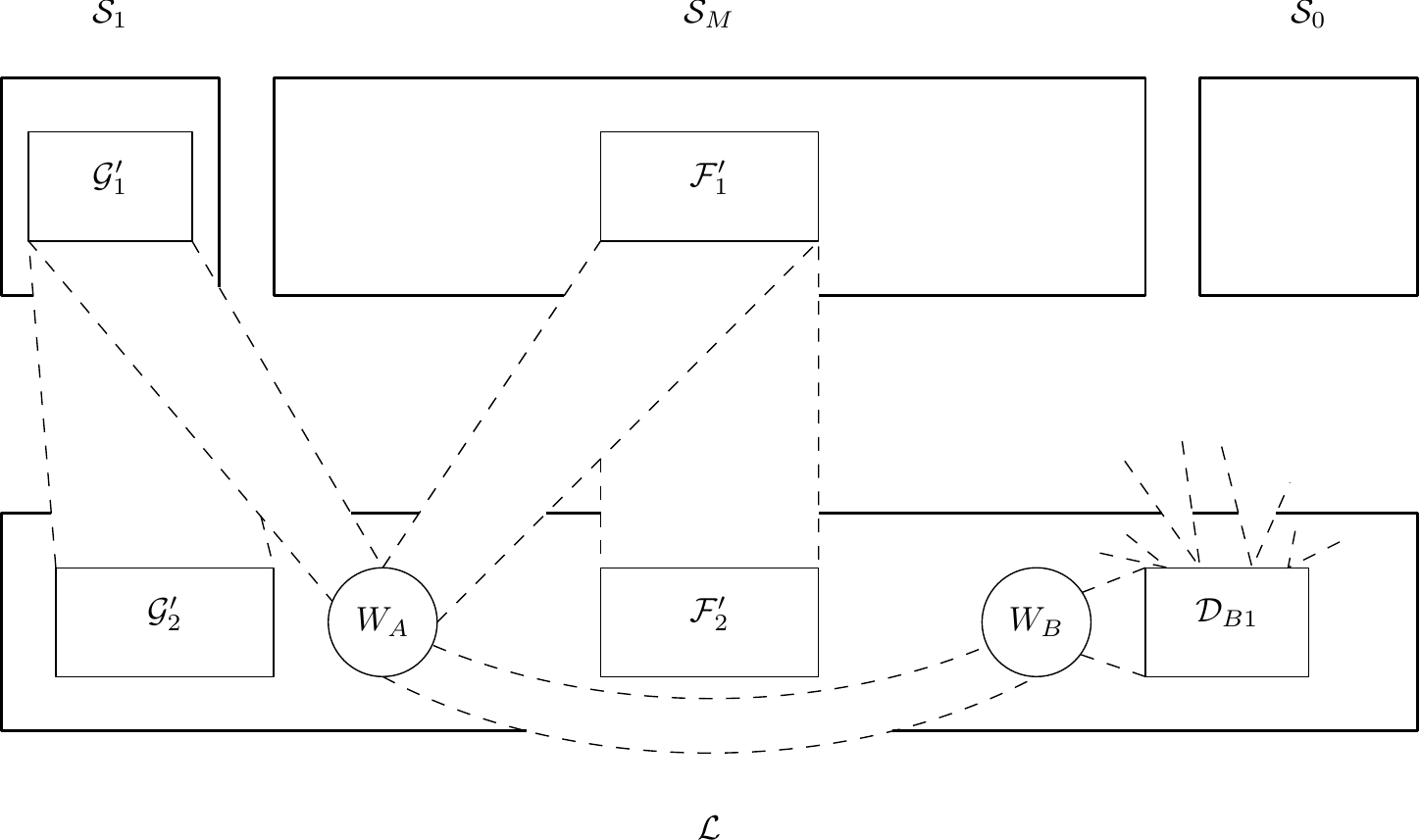}
    \caption{The embedding configuration in the case A. After inserting the vertices of $W_A, W_B$ in the ultratypical vertices of clusters $A$ and $B$ we use Proposition \ref{prop:embed-SM} to embed $\fF'$ in the matching connecting $\fS_M := \fS \cap M$ and $\fL \cap M$. Then we invoke Proposition \ref{prop:embed-LS} to embed $\fG'$ using the vertices in $\fS_1$. Finally, we again invoke Proposition \ref{prop:embed-LS} to embed $\fD_B$. 
    Note that in this case, as well as in all of the subsequent cases, it may be the case that $B \in \fS_1$. }
    \label{fig:caseA}
\end{figure}

\begin{enumerate}
    \item 
    In this step we embed the trees from the anchored forest $\fD_A$ except of several leaves, ensuring that the neighbours of those left-out leaves are mapped to ultratypical vertices in $\fL$-clusters. 
    We split the anchored~$\beta k$-forest~$\fD_A$ into two disjoint forests~$\fF$ and~$\fG$ in the following way. 
    Let~$\fF$ be a maximal subset of trees of~$\fD_A$ such that 
    \begin{align}
        \label{A:F_small}
        |\fF_2| \leq \frac{r'}{1-r'} \avdeg(A, \fS_M) - \frac{r'}{1-r'} \delta k/2,
    \end{align}
    and we choose it as an empty set if the size of the expression is less than zero. 
    
    This means that if $\fG$ is non-empty then 
    \begin{align}
        \label{A:F_large}
        |\fF_2| \geq \frac{r'}{1-r'} \avdeg(A, \fS_M) - \frac{r'}{1-r'} \delta k/2 - \beta k,
    \end{align}
    otherwise we could move a suitable tree from $\fG$ to $\fF$ while retaining the condition imposed on $\fF$. 
    By deleting the leaves of trees in $\fD_A$ that are contained in $\fF_1 \cup \fG_1$ we get forests $\fF'$ and $\fG'$. For each tree $K \in \fF' \cup \fG'$ we have $|K \cap (\fF'_1 \cup \fG'_1)| \leq |K \cap (\fF'_2 \cup \fG'_2)|$, because each vertex from $K \cap (\fF'_1 \cup \fG'_1)$ has at least one child in $(\fF'_2 \cup \fG'_2)$. Specifically, $|\fG'_1| \leq |\fG'_2|$. 
    
    Now we apply Proposition~\ref{prop:embed-SM} to our anchored forest $F_{P\ref{prop:embed-SM}}:=\fF'$ if it is non-empty. Set  $U_\psm := \phi(W_A \cup W_B)$,  $\eta_\psm := q \delta /4$, $\mathbf M_\psm:=\mathbf M$, and $A_\psm:=A$. From Definition \ref{def:partition} we know that $|U_\psm| = |W_A \cup W_B| \leq 12k / (\beta k) = 12/\beta$.

    To apply the proposition it suffices to verify that the degree of $A$ in $\fS_M$ is sufficiently large, as by definition of $\fF'$ we know that for each $K \in \fD_A$ we have $|K \cap \fF'_1| \leq |K \cap \fF'_2|$. We have
    
    \begin{align*}
        \avdeg(A,\mathcal \fS_M) 
        &\geq \frac{1-r'}{r'} |\fF_2| + \delta k/2\\
        &\geq \frac{1-r'}{r'} |\fF_2| + \frac{1-r'}{r'} |U_\psm| + \delta k/4\\
        &\geq \frac{1-r'}{r'} |\fF'_2| + \sum_{C\subseteq  \fS\::\: CD\in \mathbf M}\max\{|U_\psm\cap C|, \frac{1-r'}{r'}|U_\psm\cap D|\} \; + \eta_\psm n\;,
    \end{align*}
    where the first inequality is due to the definition of $\fF$ (bound \ref{A:F_small}) and the second one and third one are due to the facts that $\delta k / 4 \geq  \frac{1-r'}{r'} |U_\psm|$ (from the choice of $n_0$) and $\delta k /4 \geq \eta_\psm n$ (from the choice of $\eta_{P\ref{prop:embed-SM}}$).

    If $\fG$ is non-empty and, thus, the bound \ref{A:F_large} holds, we proceed by embedding $\fG'$. 
    
    We apply Proposition \ref{prop:embed-LS} (Configuration 2) to the anchored forest~$\fG'$ and $\mathcal B_\pls^{(1)}:=\mathcal S_1$. As we know that~$N_{\mathbf H}(\mathcal S_1)$ is disjoint from~$\mathcal S_M$, there is no need to include $\phi(\fF_1) \subseteq \fS_M$ in the forbidden set~$U$ that ensures the injectiveness of~$\phi$. 
    Thus, we set $U_\pls^{(1)}:=\phi(\fF_2 \cup W_A \cup W_B)$. 
    
    Also note that $\bigcup \mathcal B_\pls^{(1)} \cap U_\pls^{(1)} \subseteq \varphi(W_A \cup W_B)$, because $\phi(\fF_2) \in \fL$ (we could actually replace $W_A \cup W_B$ by $W_B$). 
    Let $\eta_\pls^{(1)} := \delta q/4$, and $A_\pls^{(1)}:=A$.
    Now we verify the first condition from Proposition~\ref{prop:embed-LS}. 
    For the degree of the cluster $A$ in $\mathcal S_1$ we have
    \begin{align*}
        \avdeg(A,\mathcal S_1)
        & = \avdeg(A, \mathcal S_1 \cup\mathcal  S_M) - \avdeg(A,\mathcal  S_M)\\
        \just{assumption of this configuration} &\geq \frac{1-\tr}{\tr} \atwo + \delta k - \avdeg(A,\mathcal  S_M)\\
        \just{$\tr \le r'$, bound \eqref{A:F_large}} & \geq \frac{1-r'}{r'} |\fF_2 \cup \fG_2| + \delta k - \frac{1-r'}{r'} |\fF_2| - \delta k/2  - \frac{1-r'}{r'} \beta k\\
        \just{bounding error terms} & \geq \frac{1-r'}{r'} |\fG_2| + 3\delta k/8\\
        \just{$|\fG_2| \ge |\fG'_2| \ge |\fG'_1|$} &\geq |\fG'_2| + 3\delta k /8\\
        &\geq |\fG'_1| + |\bigcup \mathcal B_\pls^{(1)} \cap U_\pls^{(1)}| + \eta_\pls^{(1)} n\;, 
    \end{align*}
    where we at first used the lower bound on the degree of $A$ in $\mathcal S_1 \cup \mathcal S_M$, then the lower bound on the size of $\fF_2$, after bounding the error terms we used the fact that $|\fG'_2| \geq |\fG'_1|$ and then we again bounded the errors terms by using the facts that $|\bigcup \mathcal B_\pls^{(1)} \cap U_\pls^{(1)}| \leq |W_A \cup W_B| \leq \delta k/8$ and $\eta_\pls^{(1)} n \leq \delta k /4$. 
    
    Further we verify that for each cluster $C \in\mathcal  S_1$ we have
    \begin{align*}
        \avdeg(C, V(\mathbf H) \setminus \mathcal B_\pls^{(1)})=\avdeg(C,\mathcal L) 
         &\geq \tr k+\delta k\\ 
       \just{bound on the skew of $T$} &\geq \atwo + |\phi(W_A \cup W_B)| + \eta_\pls^{(1)} n \\
        &= |\fF_2| + |\fG_2| + |\phi(W_A \cup W_B)| + \eta_\pls^{(1)} n\\
        &\geq |\fG'_2| + |U_\pls^{(1)}| + \eta_\pls^{(1)} n\;,
    \end{align*}
    Thus we can extend $\phi$ to $\fG$. Note that $\phi(\fG_2) \subseteq \fL$. 

    \item
    In this step we embed the trees from $\fD_B$ using Configuration 1 from Proposition \ref{prop:embed-LS}.
    The appropriate set $U_\pls^{(2)}$ guaranteeing the injectiveness of $\phi$ consists of $\phi(\fF'_1 \cup \fF'_2 \cup \fG'_1 \cup \fG'_2 \cup W_A \cup W_B)$. We set $\mathcal B_\pls^{(2)}:=\mathcal L$, $\eta_\pls^{(2)} := \delta q/2$ and $A_\pls^{(2)}:=B$. 
    First we verify the first condition of the proposition. We have
    \begin{align*}
        \avdeg(B,\mathcal L)
        &\geq \tr k + \delta k\\
       \just{bound on the skew of $T$} &= |\fF_2 \cup \fG_2 \cup \fD_{B1}| + \delta k\\
       \just{bounding error terms} &\geq |\phi(\fF_2 \cup \fG_2 \cup W_A \cup W_B)| + |\fD_{B1}| + \delta k/2\\
        &\geq |\bigcup \mathcal B^{(2)}_\pls \cap U_\pls^{(2)}| + |\fD_{B1}| + \eta_\pls^{(2)} n\;,
    \end{align*}

    We immediately use the 'moreover' part of the proposition with $\tilde U_\pls^{(2)} = \emptyset$ and verify that for each $L$-cluster $C$ we have
    \begin{align*}
        \avdeg(C)
        &\geq k+\delta k\\
        &\geq |\fD_{B1}|+|\fD_{B2}|+|\fD_A|+|W_A \cup W_B|+\delta k\\
        &\geq |\fD_{B1}|+|\fD_{B2}|+|U_\pls^{(2)}|+\eta_\pls^{(2)} n\;,\\
    \end{align*}
    where we use mainly the fact that $|\fD_A \cup \fD_B \cup W_A \cup W_B|=k$. 
    \item
    We have defined an injective homomorphism $\phi$ of the whole tree $T$ except of its leaves from $\fF_1 \setminus \fF_1'$ and $\fG_1 \setminus \fG'_1$. We know that their neighbours are embedded in ultratypical vertices of $L$-clusters. By Proposition~\ref{prop:degrutratypical}, such vertices have degree at least $k+\delta k-2\sqrt{\varepsilon}n/r' \ge k$ as $\delta q>2\sqrt{\varepsilon}/r'$. Thus we can greedily extend $\phi$ to the whole tree $T$.
\end{enumerate}

\subsection*{Case B}
    In this case we assume that $\tr \aone \geq (1-\tr) \atwo$ and that there are two adjacent clusters $A$, $B$ such that $\avdeg(A,\mathcal S_1\cup \mathcal S_M\cup \mathcal L)\ge (1+\delta)k$ and $\avdeg(B, \mathcal L)\ge (\tr+\delta)k$. 
    The embedding procedure is roughly similar to the one from Case~A. However, for embedding~$\fD_A$ we now also use~$\mathcal L$.
    
    We start by embedding certain part of the anchored forest~$\fD_A$ using the matching~$\mathbf M$ and the set~$\mathcal S_1$ similarly to the Case~A. Then, we proceed by reserving~$|\fD_{B1}|$ vertices that will later help us to embed the anchored trees from~$\fD_B$. In the third part we embed the rest of the forest~$\fD_A$ using the high degree vertices in~$\mathcal L$, and then proceed by embedding~$\fD_B$ using the reserved vertices. Finally, we argue that we can embed several leftover leaves of the tree as in the previous case. 

\begin{figure}
    \centering
    \includegraphics{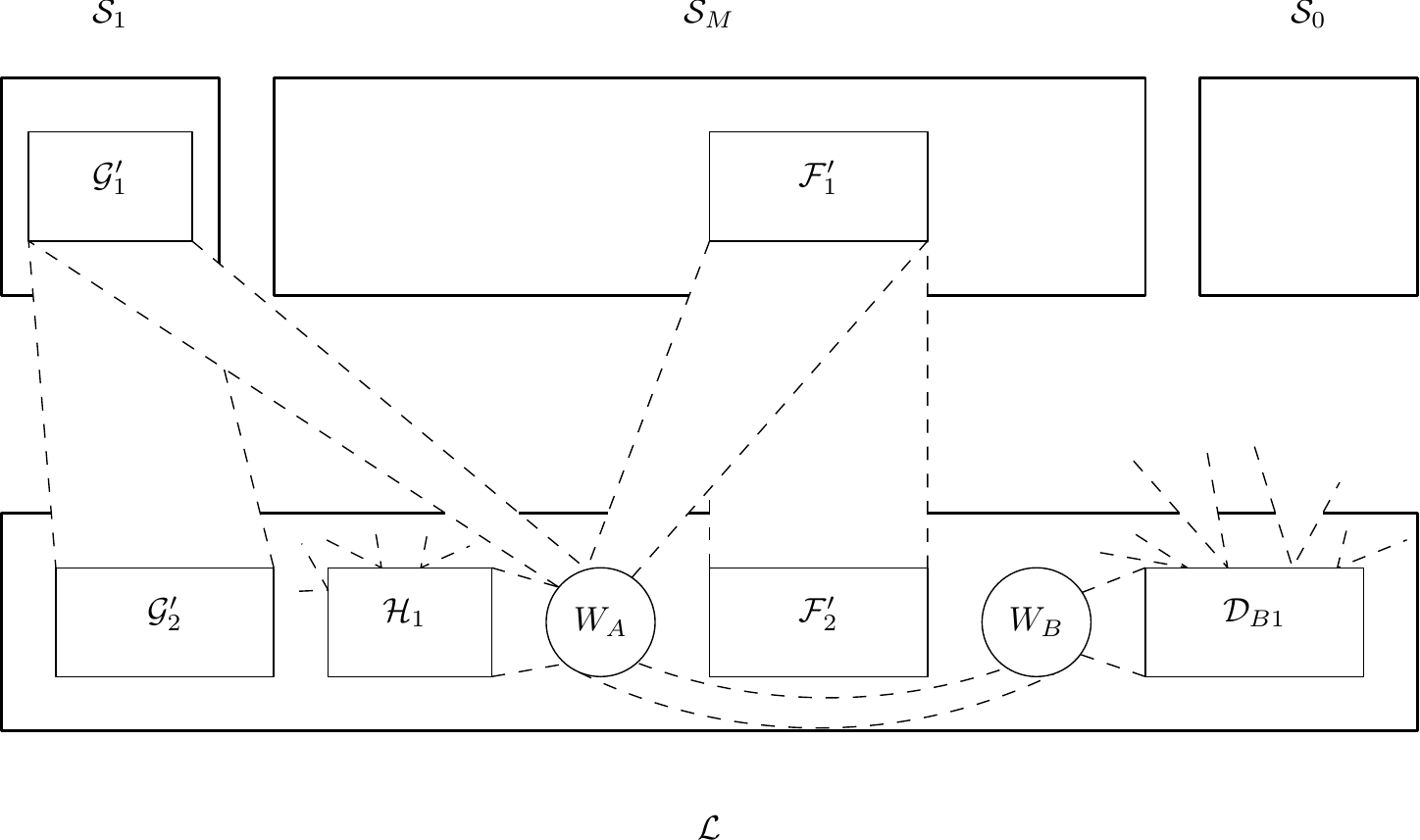}
    \caption{The embedding configuration in the case B. After inserting the vertices of $W_A, W_B$ in the ultratypical vertices of clusters $A$ and $B$ we use Proposition \ref{prop:embed-SM} to embed $\fF'$ in the matching $\mathbf{M}$. Then we invoke Proposition \ref{prop:embed-LS} to embed $\fG'$ using the vertices in $\fS_1$. Then we reserve suitable vertices in the neighbourhood of the cluster $B$ in $\fL$ that will later serve for embedding of $\fD_{B1}$ using Proposition \ref{prop:embed-LS}. Then we embed $\fH_1$ using the same proposition and finally we embed $\fD_{B}$ through the reserved vertices. }
    \label{fig:caseB}
\end{figure}
	
\begin{enumerate}
    \item 
    Analogously to the preceding case we split the anchored forest $\fD_A$ into three disjoint sets $\fF = \fF_1 \cup \fF_2$, $\fG = \fG_1 \cup \fG_2$, and $\fH = \fH_1 \cup \fH_2$ in the following way. 
    
    Let $K_1, K_2, \dots$ be the trees of $\fD_A$ sorted according to their skew, i.e., according to the ratio $|K_i \cap V(T_2)| / |K_i \cap V(T_1)|$ in descending order. 
    We define $\fF$ as the union $K_1 \cup \dots \cup K_j$, where $j$ is taken to be maximal such that 
    \begin{align}
        |\fF_2| = \sum_{i=1}^j |K_i \cap V(T_1)| \leq \frac{r'}{1-r'} \avdeg(A,\mathcal  S_M) - \frac{r'}{1-r'} \delta k/3.      
    \end{align}
    If the right hand side is less than zero, define~$\fF$ as the empty set. 
    Then we similarly define~$\fG$ as the union of trees $K_{j+1}, \dots, K_{j'}$ where $j'$ is maximal such that 
    \begin{align}
        |\fG_2| = \sum_{i=j+1}^{j'} |K_i \cap V(T_1)| \leq \frac{r'}{1-r'}\avdeg(A,\mathcal  S_1)-\frac{r'}{1-r'}\delta k/3. 
    \end{align}
    Finally we set $\fH=\fD_A \setminus(\fF \cup \fG)$. 

    As before, we have 
    \begin{align}
        \label{B:F_large}
        |\fF_2| 
        \geq \frac{r'}{1-r'} \avdeg(A,\mathcal  S_M) - \frac{r'}{1-r'} \delta k/3 - \beta k \;,  
    \end{align}
    if $\fF \not = \fD_A$ and
    \begin{align}
        \label{B:G_large}
        |\fG_2| 
        \geq \frac{r'}{1-r'}\avdeg(A,\mathcal  S_1)-\frac{r'}{1-r'}\delta k/3 - \beta k \;,
    \end{align}
    if $\fF \cup \fG \not = \fD_A$.
    Additionally, we also have 
    \begin{align}
        \label{B:FG_skew}
        \tr |\fF_1 \cup \fG_1| \geq (1-\tr) |\fF_2 \cup \fG_2|   \;,
    \end{align}
    because of the assumption $\tr \aone \geq (1-\tr) \atwo$ and the fact that in $\fF \cup \fG$ there are the anchored trees with biggest skew.
    
    
    We define~$\fF'$ and~$\fG'$ as in the previous case. We have $|K \cap \fF'_1| \leq |K \cap \fF'_2|$ for each $K \in \fF'$ and $|\fG'_1| \leq |\fG'_2|$. 
    
    If $\fF'$ is non-empty we apply Proposition \ref{prop:embed-SM} to embed the anchored forest~$F_\psm:=\fF'$ in the same way as in the previous case. Set $U_\psm = \phi(W_A \cup W_B)$, $\eta_\psm=\delta q / 4$, $r_\psm:= r'$,  $\mathbf M_\psm:=\mathbf M$, and $A_\psm:=A$.
    Similarly to the previous case we verify that
    \begin{align*}
        \avdeg(A,\mathcal S_M) 
        &\geq \frac{1-r'}{r'} |\fF_2| + \delta k/3\\
        &\geq \frac{1-r'}{r'} |\fF_2| + \frac{1-r'}{r'} |U_\psm| + \delta k/4\\
        &\geq \frac{1-r'}{r'} |\fF'_2| + \sum_{C\subseteq  \fS\::\: CD\in \mathbf M}\max\{|U_\psm\cap C|, \frac{1-r'}{r'}|U_\psm\cap D|\} \; + \eta_\psm n\;.
    \end{align*}
    
    If $\fG$ is non-empty we proceed by embedding $\fG'$. This is also done in an analogous way to the preceding case. 
    
    We apply Proposition \ref{prop:embed-LS} (Configuration 2) to the anchored forest $F_\pls^{(1)}:=\fG'$ and set $\mathcal B_\pls^{(1)}:=\mathcal S_1$. 
     By the properties of a skew LKS graph, the set  $N_{\mathbf H}(\mathcal S_1) \cup \mathcal S_1$ is disjoint from $\mathcal S_M \supseteq \phi(\fF'_1)$, thus for ensuring injectiveness of~$\phi$ it suffices to set $U_\pls^{(1)} := \phi(\fF'_2 \cup W_A \cup W_B)$ and then we also have $ \bigcup \mathcal B_\pls^{(1)} \cap U_\pls^{(1)} \subseteq W_A \cup W_B$. 
    Set $\eta_\pls^{(1)} := \delta q /4$, and $A_\pls^{(1)}:=A$.
    
    Now we verify the first condition from the proposition. For the degree of the cluster~$A$ in~$\mathcal S_1$ we have
    \begin{align*}
        \avdeg(A,\mathcal S_1) 
        &\geq \frac{1-r'}{r'} |\fG_2| + \delta k/3\\
        &\geq |\fG'_1| + |\bigcup \mathcal B_\pls^{(1)} \cap U_\pls^{(1)}| + \eta_\pls^{(1)} n\;, 
    \end{align*}
    where we use the definition of $\fG$, the fact that $|\fG_2| \geq |\fG'_1|$ and the fact that $|\bigcup \mathcal B_\pls^{(1)} \cap U_\pls^{(1)}| \leq \delta q / 12$.
    
    Further, we verify that for each cluster $C \in \fS_1$ we have
    \begin{align*}
        \avdeg(C, V(\mathbf H) \setminus \mathcal B_\pls^{(1)})=\avdeg(C,\mathcal L) 
        &\geq \tr k+\delta k\\ 
        &\geq \atwo + |\phi(W_A \cup W_B)| + \delta k/2 \\
        &\geq |\fF_2| + |\fG_2| + |\phi(W_A \cup W_B)| + \delta k / 2\\
        &\geq |\fG'_2| + |U_\pls^{(1)}| + \eta_\pls^{(1)} n\;,
    \end{align*}
    where we used the facts that $\atwo \leq \tr k$ and bounded the error terms in the usual manner.  

    \item
    In this step we reserve suitable vertices for embedding $\fD_B$ and use Proposition \ref{prop:embed-LS}, Configuration 1, to this end. 

    We apply the proposition the anchored forest $F_\pls^{(2)}:=\fD_B$, $A_\pls^{(2)}:=B$,  the set $U_\pls^{(2)}:=\phi(W_A \cup W_B \cup \fF_2 \cup \fG_2)$, and $\mathcal B_\pls^{(2)}:=\mathcal L$. Take $\eta_\pls^{(2)} := q\delta / 20$. We start by verifying the first condition:
    \begin{align*}
        \avdeg(B,\mathcal L)
        &\geq \tr k + \delta k\\
        &\geq |\fF_2 \cup \fG_2 \cup \fH_2 \cup \fD_{B1}| + \delta k\\
        &\geq |\fF_2 \cup \fG_2 \cup \fD_{B1}| + \delta k\\
        &\geq |\fD_{B1}| + |\phi(W_A \cup W_B \cup \fF_2 \cup \fG_2)| + \delta k/2\\
        &\geq |\fD_{B1}| + |U_\pls^{(2)}| + \eta_\pls^{(2)} n\;,\\
    \end{align*}
    where we use the upper bound on the smaller colour class of $T$ and then we bound the error terms as usual. This gives us an embedding of $N(W_B )\cap \fD_B$ as well as the reservation set~$W$ that will help us later for embedding~$\fD_{B1}$. 
      
    Before finishing the embedding of~$\fD_B$ by invoking the 'moreover' part of Proposition~\ref{prop:embed-LS}, Configuration~1, we shall embed the anchored forest~$\fH$, which will define the set $\tilde U_\pls^{(2)} := \phi(\fH)$. 
      
    \item
    We proceed with embedding of $F_\pls^{(3)}:=\fH$, using a third time Proposition~\ref{prop:embed-LS}, Configuration~1. Let $U'=\phi(N(W_B)\cap \fD_B) \cup W$, $|U'| = |\fD_{B1}|$ and set $U_\pls^{(3)} := \phi(W_A \cup W_B \cup \fF \cup \fG)\cup U'$. Thus $U_\pls^{(3)} \cap \fL \subseteq \phi(W_A \cup W_B \cup \fF'_2 \cup \fG'_2) \cup U'$. 
    Further set $\mathcal B_\pls^{(3)}:=\mathcal L$, $\eta_\pls^{(3)} := \delta q / 4 \ge \eta_\pls^{(2)}$, and $A_\pls^{(3)}:=A$.
    We verify the first condition of the proposition:
    \begin{align*}
        \avdeg(A,\mathcal  L)
        &\geq k + \delta k - \avdeg(A,\mathcal  S_M) - \avdeg(A, \mathcal S_1) \\
        \just{bounds \eqref{B:F_large} and \eqref{B:G_large} } & \geq k + \delta k - (\frac{1-r'}{r'} |\fF_2| + \delta k/3 + \frac{1-r'}{r'} \beta k) - (\frac{1-r'}{r'} |\fG_2| + \delta k/3 + \frac{1-r'}{r'} \beta k)\\
        \just{bounding error terms} & \geq k - \frac{1-r'}{r'} (|\fF_2| + |\fG_2|) + \delta k/4 \\
        \just{$\tilde r\le r'$}&\ge k - \frac{1-\tr}{\tr} (|\fF_2| + |\fG_2|) + \delta k/4 \\
        \just{bound \eqref{B:FG_skew} } & \geq k - (|\fF_1| + |\fG_1|) + \delta k/4\\
        \just{$T$ is of size $k$} & \geq |\fF_2|+|\fG_2|+|\fH|+|\fD_B|+|W_A \cup W_B| + \delta k/4\\
        &\geq |\fH_1| + |\phi(W_A \cup W_B \cup \fF'_2 \cup \fG'_2)| + |\fD_{B1}| + \delta k/4\\
        &\geq |\fH_1| + |U_\pls^{(3)} \cap \bigcup \mathcal B^{(3)}_\pls| + \eta_\pls^{(3)} n\;,
    \end{align*}
    where we at first used our bounds on $|\fF_2|$ and $|\fG_2|$. Then we used the inequality  $\tr |\fF_1 \cup \fG_1| \geq (1-\tr) |\fF_2 \cup \fG_2|$, we followed by interpreting $k$ as the size of $T$ and used trivial bounds on error term throughout the computation. 

    We immediately use the second part of the proposition with $\tilde U_\pls^{(3)} = \emptyset$. We verify that for each $C \in \mathcal L$ we have
    \begin{align*}
        \avdeg(C)
        &\geq k + \delta k\\
        &=|\fF \cup \fG \cup \fH \cup \fD_B \cup W_A \cup W_B| + \delta k\\
        &\geq |\fH_1| + |\fH_2| + |U_\pls^{(3)} \cup \tilde U_\pls^{(3)}| + \eta_\pls^{(3)} n\;.
    \end{align*}

    Thus, we can extend $\phi$ to $\fH$. Note that $|C \setminus (U \cup U' \cup \tilde U)| \ge r' \eta_\pls^{(3)}|C|/8$ for each cluster~$C$ with $C \cap \phi(\fH)$.

    \item 
    Now, we finish up the embedding of~$\fD_B$, using the 'moreover' part of the second application of Proposition~\ref{prop:embed-LS}. The first condition of the proposition is satisfied, as for each $C \in\mathcal  L$ we have
    \begin{align*}
        \avdeg(C)
        &\geq k + \delta k\\
        &= |\fD_A \cup \fD_B \cup W_A \cup W_B| + \delta k\\
        &\geq |\fD_{B1}| + |\fD_{B2}| + |\phi(W_A \cup W_B \cup \fF' \cup \fG' \cup \fH)| + \eta_\pls^{(2)} n\\
        &\geq |\fD_{B1}| + |\fD_{B2}| + |U_\pls^{(2)} \cup \tilde U_\pls^{(2)}| + \eta_\pls^{(2)} n\;.
    \end{align*}
    The second condition is that for each cluster~$C$ with $C \cap \phi(\fH)$ we have $|C \setminus (U \cup U' \cup \tilde U)| \ge r' \eta_\pls^{(2)}|C|/8$. This is satisfied as $\eta_\pls^{(3)} \geq \eta_\pls^{(2)}$ and by the property of the embedding of~$\fH$, guaranteed by the third application of Proposition \ref{prop:embed-LS}.

    \item   
    We have defined an injective homomorphism $\phi$ on the whole tree $T$ except of its leaves from $\fF_1 \setminus \fF'_1$ and $\fG_1 \setminus \fG'_1$. As we know that their neighbours are embedded in ultratypical vertices of $L$-clusters, we can greedily extend the embedding to the whole tree~$T$, as in Case~A. 
\end{enumerate}

\subsection*{Case C}
    In this case we assume that $\tr \aone \leq (1-\tr) \atwo$ and that there are adjacent clusters~$A$ and~$B$ such that $\avdeg(A,\mathcal S_1\cup\mathcal  S_M\cup\mathcal  L)\ge (1+\delta)k$ and $\avdeg(B,\mathcal  L)\ge \bone+\delta k = |\fD_{B1}|+\delta k$. The embedding procedure is very similar to the one from the preceding case, the difference being in the order in which we embed the parts of~$T$ in the host graph. 
    
    We start by reserving vertices for the embedding of the anchored forest~$\fD_B$ using Proposition~\ref{prop:embed-LS}. Then we embed parts of~$\fD_A$ using the matching~$\mathbf M$ and~$\mathcal S_1$ as in the previous cases. We have to be more careful, though, as the vertices reserved for~$\fD_B$ can cover substantial part of~$\mathbf M$. We finish by embedding the rest of~$\fD_A$ through high degree $L$-clusters using Proposition~\ref{prop:embed-LS}.

    \begin{figure}
        \centering
        \includegraphics{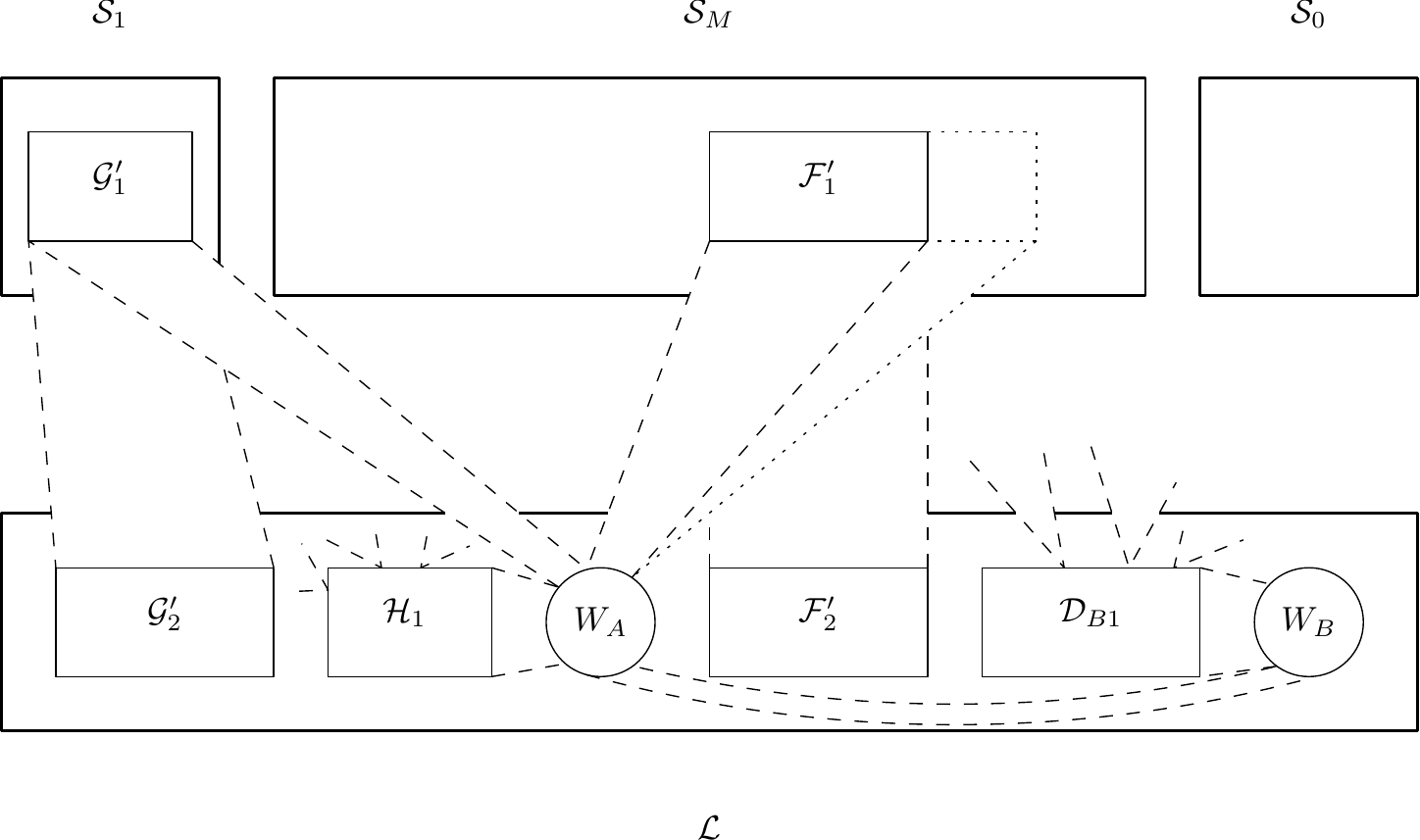}
        \caption{The embedding configuration in the case C. The configuration is very similar to the preceding one from case B. However, in this case we start by embedding $\fD_B$ in the neighbourhood of the cluster $B$. The figure suggests that because of the vertices reserved for $\fD_{B1}$ we must be more careful in the application of Proposition \ref{prop:embed-SM} and add those vertices in the forbidden set $U_{P\ref{prop:embed-SM}}$. }
        \label{fig:caseC}
    \end{figure}
    
    \begin{enumerate}

    \item
    We start by reserving vertices for embedding the anchored forest $\fD_B$ such that $\fD_{B1} := \fD_B \cap V(T_1)$ will be embedded in the neighbourhood of the cluster $B$. Set $\mathcal B_\pls^{(1)} :=\mathcal  L$ and $U_\pls^{(1)} := \phi(W_A \cup W_B)$.
    Set $\eta_\pls^{(1)} := q\delta / 20$, and $A_\pls^{(1)}:=B$. 
    We apply Proposition \ref{prop:embed-LS}, Configuration 1, to reserve vertices in~$\fL$ that will later serve for embedding of $\fD_B$. We verify that the first condition of the proposition is satisfied. Indeed:
    \begin{align*}
        \avdeg(B,\mathcal  L)
        &\geq |\fD_{B1}| + \delta k\\
        &\geq |\fD_{B1}| + |\phi(W_A \cup W_B)| + \eta_\pls^{(1)} n\;,
    \end{align*}
    where we used the standard error estimation. 
    
    This gives us embedding of $N(W_B) \cap \fD_B$ as well as a reserved set~$W$. We set $U'=\phi(N(W_B) \cap \fD_B) \cup W$, $|U'| = |\fD_{B1}|$. After embedding the whole~$T$ except of several of its leaf neighbours, we will invoke the second part of the proposition with $\tilde U^{(1)} = \phi(\fF' \cup \fG' \cup \fH)$ where $\fF' \cup \fG' \cup \fH \subseteq \fD_A$. 
    Note that if we set~$\tilde U^{(1)}$ to such value, we will satisfy the first condition needed for the actual embedding of~$\fD_B$, because for any cluster $C \in \mathcal L$ we have
    \begin{align*}
        \avdeg(C)
        &\geq k + \delta k\\
        &=|\fD_A \cup \fD_B \cup W_A \cup W_B| + \delta k\\
        &\geq |\fD_{B1}| + |\fD_{B2}| + |U_\pls^{(1)} \cup \tilde U^{(1)}| + \eta_\pls^{(1)} n\;.
    \end{align*}
    To satisfy the second condition we will ensure that for all subsequent applications of Propositions~\ref{prop:embed-SM} and~\ref{prop:embed-LS} we choose the value~$\eta$ being greater than~$\eta_\pls^{(1)}$. 
    
    \item
    
    We now proceed by embedding the anchored forest $\fD_A$ analogously to the previous case. We split the forest $\fD_A$ into three (possibly empty) forests $\fF, \fG, \fH$ in such a way that $\fF$ is maximal with
    \begin{align}
        |\fF_2| \leq \frac{r'}{1-r'} \avdeg(A, \fS_M) - |U'| - \frac{r'}{1-r'} \delta k / 3\;,
    \end{align}
    or $\fF$ is empty if the value of right hand side is smaller then zero. Moreover, if $\fF \not = \fD_A$, we have
    \begin{align}
        \label{C:F_big}
        |\fF_2| \geq \frac{r'}{1-r'} \avdeg(A, \fS_M) - |U'| - \frac{r'}{1-r'} \delta k / 3 - \beta k\;.
    \end{align}
    Then we similarly define $\fG$ to be maximal such that
    \begin{align}
        |\fG_2| \leq \frac{r'}{1-r'} \avdeg(A, \fS_1) - \frac{r'}{1-r'} \delta k / 3\;,
    \end{align}
    or $\fG$ is empty if the value of right hand side is smaller then zero. Moreover, if $\fF \cup \fG \not = \fD_A$, we have
    \begin{align}
        \label{C:G_big}
        |\fG_2| \geq \frac{r'}{1-r'} \avdeg(A, \fS_1) - \frac{r'}{1-r'} \delta k / 3 - \beta k\;.
    \end{align}
    
    We have $\fH := \fD_A \setminus (\fF \cup \fG)$ and, as in the previous case, $\fF \cup \fG$ consist of the trees with big skew, so if $\fD_{A2}$ is non-empty we have:
    \begin{align}
        \label{C:H_skew}
        \frac{1-\tr}{\tr} \geq 
        \frac{|\fD_{A1}|}{|\fD_{A2}|} \geq
        \frac{|\fH_1|}{|\fH_2|}\;. 
    \end{align}

    We define $\fF'$ and $\fG'$ as usual. 
    We use Proposition~\ref{prop:embed-SM} to embed the forest~$F_{P\ref{prop:embed-SM}}:=\fF'$ as in the previous cases. 
    Set $U_\psm := \phi(W_A \cup W_B) \cup U'$, $\eta_\psm := \delta q / 4$, $\mathbf M_\psm:=\mathbf M$, and $A_\psm:=A$. We verify that
    \begin{align*}
        \avdeg(A,\mathcal S_M) 
        &\geq \frac{1-r'}{r'} |\fF_2| + \frac{1-r'}{r'} |U'|+ \delta k/3\\
        &\geq \frac{1-r'}{r'} |\fF_2| + \frac{1-r'}{r'} |U_\psm| + \delta k/4\\
        &\geq \frac{1-r'}{r'} |\fF'_2| + \sum_{C\subseteq  \fS\::\: CD\in \mathbf M}\max\{|U_\psm\cap C|, \frac{1-r'}{r'}|U_\psm\cap D|\} \; + \eta_\psm n\;,
    \end{align*}
    where we used the fact that $|U_\psm| = |\phi(W_A \cup W_B)| + |U'| \leq |U'| + \delta k/12$. 
    
    If $\fG$ is non-empty, we proceed by embedding $\fG'$. As in the preceding cases, we apply Proposition~\ref{prop:embed-LS}, Configuration 2, to $F_\pls^{(2)}:=\fG'$ and set $\mathcal B_\pls^{(2)} := \mathcal S_1$. 
    As we know that $N_{\mathbf{H}}(\mathcal \fS_1) \cup \mathcal \fS_1$ is disjoint from $\bigcup \mathcal \fS_M \supseteq \phi(\fF'_1)$, for ensuring the injectiveness of $\phi$ it suffices to set $U_\pls^{(2)} := \phi(\fF'_2 \cup W_A \cup W_B) \cup U'$. 
    Because $\phi(\fF'_2) \cup U' \subseteq \fL$, we have $\bigcup \mathcal B_\pls^{(2)} \cap U_\pls^{(2)} \subseteq \phi(W_A \cup W_B)$. Set $\eta_\pls^{(2)} := \delta q / 4$, and $A_\pls^{(2)}:=A$. 
    We start by verifying the first condition from the proposition. We have
    \begin{align*}
        \avdeg(A,\mathcal \fS_1) 
        &\geq \frac{1-r'}{r'} |\fG_2| + \delta k/3\\
        &\geq |\fG'_1| + |\bigcup \fB_\pls^{(2)} \cap U_\pls^{(2)}| + \eta_\pls^{(2)} n\;, 
    \end{align*}
    where we use the definition of $\fG$, the fact that $|\fG_2| \geq |\fG'_1|$ and the fact that $|\bigcup \mathcal B_\pls^{(2)} \cap U_\pls^{(2)}| \leq 12/\beta$. 
    
    Further we verify that for each cluster $C \in \mathcal \fS_1$ we have
    \begin{align*}
        \avdeg(C, V(\mathbf H) \setminus \bigcup \mathcal B_\pls^{(2)})=\avdeg(C,\mathcal L) 
        &\geq \tr k+\delta k\\ 
        \just{bound on skew of $T$} & \geq \atwo + \bone + \delta k\\
        &\geq (|\fF_2| + |\fG_2|) + |\fD_{B1}| + |\phi(W_A \cup W_B)| + \delta k/2\\
        &\geq |\fG'_2| + |\phi(\fF'_2 \cup W_A \cup W_B)| + |U'| + \delta k/2\\
        &\geq |\fG'_2| + |U_\pls^{(2)}| + \eta n\;,
    \end{align*}
    where we started by using the bound on the skew of $T$, i.e., $\atwo + \bone \leq \tr k$, then bounded the error terms and rearranged suitable terms. 
    
    \item
    Now we apply Proposition~\ref{prop:embed-LS}, the first part, to embed the forest $F_\pls^{(3)}:=\fH$. Set $\mathcal B_\pls^{(3)} :=\mathcal  L$ and $U_\pls^{(3)} := \phi(W_A \cup W_B \cup \fF' \cup \fG') \cup U'$, thus $U_\pls^{(3)} \cap \fL \subseteq \phi(W_A \cup W_B \cup \fF'_2 \cup \fG'_2)\cup U'$. Set $\eta_\pls^{(3)}:=\delta q/8$, and $A_\pls^{(3)}:=A$. We start by verifying the first condition:
    \begin{align*}
        \avdeg(A,\mathcal  L) 
        &\geq k + \delta k - \avdeg(A, \fS_M) - \avdeg(A, \fS_1)\\
        \just{bounds \eqref{C:F_big} and \eqref{C:G_big}} & \geq k + \delta k - (\frac{1-r'}{r'} |\fF_2| + \delta k/3 + \frac{1-r'}{r'} |\fD_{B1}| + \frac{1-r'}{r'} \beta k) \\ 
        &\;\;\;\; - (\frac{1-r'}{r'} |\fG_2| + \delta k/3 + \frac{1-r'}{r'} \beta k)\\
        \just{bounding error terms} & \geq k - \frac{1-r'}{r'} (|\fF_2 \cup \fG_2 \cup \fD_{B1}|) + \delta k/4 \\
        \just{$\tilde r\le r'$}& \geq k - \frac{1-\tr}{\tr} (|\fF_2 \cup \fG_2 \cup \fD_{B1}|) + \delta k/4 \\
        &= k - \frac{1}{\tr}(|\fF_2 \cup \fG_2 \cup \fD_{B1}|) + (|\fF_2 \cup \fG_2 \cup \fD_{B1}|) + \delta k/4\\
        \just{bound on skew of $T$} & \geq k - \frac{1}{\tr} (\tr k - |\fH_2|) + (|\fF_2 \cup \fG_2 \cup \fD_{B1}|) + \delta k/4\\
        &= \frac{1}{\tr} |\fH_2| + (|\fF_2 \cup \fG_2 \cup \fD_{B1}|) + \delta k/4\\
        &\geq \frac{1-\tr}{\tr} |\fH_2| + (|\fF_2 \cup \fG_2 \cup \fD_{B1}|) + \delta k/4\\
        \just{bound \eqref{C:H_skew}} & \geq |\fH_1| + |\phi(W_A \cup W_B \cup \fF'_2 \cup \fG'_2)| + |\fD_{B1}| + \eta_\pls^{(3)} n\\
        &\geq |\fH_1| + |U_\pls^{(3)} \cap \bigcup \mathcal B^{(3)}_\pls| + \eta_\pls^{(3)} n\;,\\
    \end{align*}
    
    We set $\tilde U_\pls^{(3)} = \emptyset$ and immediately invoke the second part of proposition. We verify that for each $C \in \mathcal L$ we have
    \begin{align*}
        \avdeg(C)
        &\geq k + \delta k\\
        &=|\fD_A \cup \fD_B \cup W_A \cup W_B| + \delta k\\
        &\geq |\fH_1| + |\fH_2| + |\phi(W_A \cup W_B \cup \fF' \cup \fG') \cup U'| + \eta_\pls^{(3)} n\\
        &\geq |\fH_1| + |\fH_2| + |U_\pls^{(3)} \cup \tilde U_\pls^{(3)}| + \eta_\pls^{(3)} n\;.
    \end{align*}
    
    Thus we can extend $\phi$ to $\fH$. Moreover, note that after each application of Propositions~\ref{prop:embed-SM} and~\ref{prop:embed-LS} it was true that~$\phi$ avoided at least $r'\eta_\pls^{(1)}|C|/8$ vertices of each cluster~$C$. Thus, we can extend~$\phi$ to~$\fD_B$ as we promised in the first part of the analysis of this case. 

    \item
    We have defined $\phi$ on the whole tree~$T$ except for $\fF_1 \setminus \fF'_1$ and $\fG_1 \setminus \fG'_1$. We can again extend~$\phi$ to the whole~$T$ in the usual greedy manner. 
    
    \end{enumerate}


\subsection*{Case D}
    In this case we assume the existence of two adjacent clusters $A, B$ such that $\avdeg(A,\mathcal  \fS_M \cup \mathcal L) \geq k + \delta k$ and $\avdeg(B,\mathcal  L) \geq \bone + \delta k$. Moreover, we assume that $\tr \aone \geq (1-\tr) \atwo$ and $\bone \leq \frac{\tr}{1-\tr} rk$ and for each edge $(C,D) \subseteq \mathbf M$ either $\avdeg(A, C) = 0$ or $\avdeg(A,D)=0$. 
    
    We proceed in the same way as in the previous case, although the analysis is different.

    \begin{figure}
        \centering
        \includegraphics{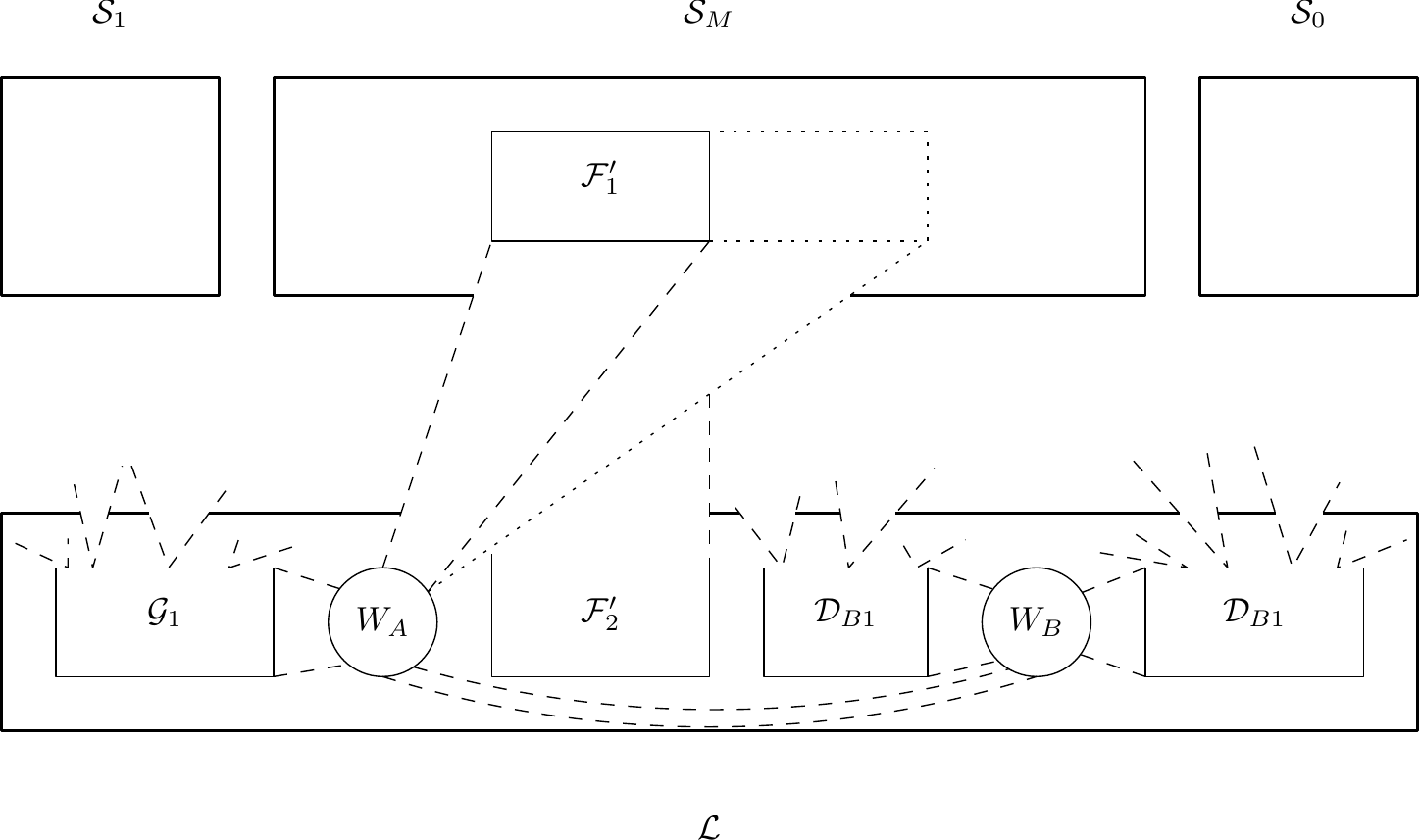}
        \caption{The embedding configuration in the case D. The order of operations is the same as in the preceding case, but the analysis is different. The figure suggests that as in the previous case we have to me more careful in the application of Proposition \ref{prop:embed-SM}. The special condition on the neighbourhood of the cluster $A$ plays the following role: we split the reserved vertices for $\fD_{B1}$ into two parts -- the vertices in the neighbourhood of $A$ (the right rectangle on the figure) and those that are not neighbours of $A$ (the left rectangle). Now the condition implies that the first type of vertices does not play a role in the embedding of $\fF'$ using the matching, whilst the second type of vertices does not have to be considered in the embedding of $\fG$ through the $\fL$-neighbourhood of $A$. }
        \label{fig:caseD}
    \end{figure}

    \begin{enumerate}
        \item 
        We start by reserving vertices for embedding the anchored forest~$F_\pls^{(1)}:=\fD_B = \fD_{B1} \cup \fD_{B2}$ such that $\fD_{B1}$ will be embedded in the $L$-neighbourhood of the cluster~$B$. This is done using Proposition~\ref{prop:embed-LS} in the exactly same way as in the previous case. 
        We get an embedding of $N(W_B) \cap \fD_B$ and a set of reserved vertices $W$. We set $U' = \phi(N(W_B) \cap \fD_B) \cup W$, $|U'| = |\fD_{B1}|$. We will also invoke the 'moreover' part Proposition~\ref{prop:embed-LS} after embedding the rest of~$T$ and then we set $\tilde U_\pls^{(1)} = \phi(\fF' \cup \fG)$ for $\fF' \cup \fG \subseteq \fD_A$. We have to ensure that for subsequent applications of Propositions~\ref{prop:embed-SM} and~\ref{prop:embed-LS} we have $\eta \ge \eta_\pls^{(1)} = q \delta /20$. 
        
        Moreover, we split the set $U' \subseteq \fL$ in two sets~$U'_1$ and~$U'_2$ such that~$U'_1$ contains the vertices from~$U'$ contained in clusters~$C$ such that $C \in N_{\mathbf H}(A)$ (we define $N_{\mathbf H}(A)$ as the set of clusters $C$ with $\avdeg(A, C) > 0$) and $U'_2 := U' \setminus U'_1$. Note that our assumption on the neighbourhood of cluster $A$ states that if we have $(C,D) \subseteq \mathbf M$ with $D \cap  U'_1\neq \emptyset$, we have then $\avdeg(A, C) = 0$. 
        
        \item
        We continue by embedding the anchored forest $\fD_A$ analogously to previous cases. Partition $\fD_A = \fF \cup \fG$, ordering the components by decreasing order f their skew, in such a way that $\fF$ is maximal with
        \begin{align}
            |\fF_2| \leq \frac{r'}{1-r'} \avdeg(A,\mathcal  \fS_M) - |U'_2| - \frac{r'}{1-r'} \delta k/2\;,
        \end{align}
        or $\fF$ is empty if the right hand side is smaller than zero. We define $\fF'$ as usual. If $\fF \not = \fD_A$, we have
        \begin{align}
            \label{D:F_large}
            |\fF_2| \geq \frac{r'}{1-r'} \avdeg(A,\mathcal  \fS_M) - |U'_2| - \frac{r'}{1-r'} \delta k/2 -  \beta k\;.
        \end{align}
        
        Moreover, $\fF$ is chosen so that it contains the trees with maximal skew, thus if it is non-empty we have
        \begin{align}
            \label{D:F_skew}
            \frac{|\fF_1|}{|\fF_2|} 
            \geq \frac{|\fF_1 \cup \fG_1|}{|\fF_2 \cup \fG_2|} 
            \geq \frac{1-\tr}{\tr}\;. 
        \end{align}

        Now we use Proposition~\ref{prop:embed-SM} to embed~$F_\psm:=\fF'$. Set $U_\psm := \phi(W_A \cup W_B) \cup U'_2$ and $\mathbf{M}_\psm$ be only those matching pairs $(C,D), C \subseteq \fS$ such that $\avdeg(A, C) > 0$.
        Observe that~$U'_1$ is disjoint from~$\bigcup V(\mathbf{M}_\psm)$. Set $\eta_\psm := \delta q / 3$, and $A_\psm:=A$. 
        As in the previous cases we easily verify that 
        \begin{align*}
            \avdeg(A,\mathcal \fS_M) 
            &\geq \frac{1-r'}{r'} |\fF_2| + \frac{1-r'}{r'} |U'_2|+ \delta k/2\\
            &\geq \frac{1-r'}{r'} |\fF_2| + \frac{1-r'}{r'} |U_\psm| + \delta k/3\\
            &\geq \frac{1-r'}{r'} |\fF'_2| + \sum_{C\subseteq  \fS\::\: CD\in \mathbf M}\max\{|U_\psm\cap C|, \frac{1-r'}{r'} |U_\psm\cap D|\} \; + \eta n.
        \end{align*}
        
        Thus we can extend $\phi$ to $\fF'$. Note that $\fF'_2$ is embedded in $L$-clusters that are not in the neighbourhood of $A$. Indeed, from our assumption on the cluster $A$ we have $\avdeg(A, D)=0$ for any edge $CD\in \mathbf{M}_\psm$, with $C\subseteq \fS$.
    
        \item
        We now apply Proposition~\ref{prop:embed-LS}, first part, to embed $F_\pls^{(2)}:=\fG$ if it is non-empty. Set $\mathcal B_\pls^{(2)} :=\mathcal  L \cap N_{\mathbf H}(A)$ and $U_\pls^{(2)} := \phi(W_A \cup W_B \cup \fF) \cup U'$. 
        Note that $U_\pls^{(2)} \cap \bigcup \mathcal B_\pls^{(2)} \subseteq \phi(W_A \cup W_B) \cup U'_1$, as we know that neither~$U'_2$, nor~$\phi(\fF'_2)$ is in~$N_{\mathbf H}(A)$ and $\phi(\fF'_1) \cap \fL = \emptyset$. Set $\eta_\pls^{(2)} := q \delta /4$, and $A_\pls^{(2)}:=A$. 
        
        We verify the first condition of the proposition:
        
        \begin{align*}
            \avdeg(A,\mathcal L) 
            &\geq k + \delta k - \avdeg(A, \fS_M)\\
            \just{bound \eqref{D:F_large}} & \geq k + \delta k - (\frac{1-r'}{r'} |\fF_2| + \frac{1-r'}{r'} |U'_2| + \delta k/2 + \frac{1-r'}{r'} \beta k)\\
            \just{definition of $U'$} & \geq k + \delta k - \frac{1-r'}{r'} |\fF_2| - \frac{1-r'}{r'} (|\fD_{B1}| - |U'_1|) - \delta k /2 - \frac{1-r'}{r'} \beta k\\
            \just{bounding error terms \& $\tilde r\le r'$} & \geq k - \frac{1-\tr}{\tr} |\fF_2| - \frac{1-\tr}{\tr} |\fD_{B1}| + \frac{1-r'}{r'} |U'_1| + \delta k /3\\
            \just{bound \eqref{D:F_skew} } &\geq k - |\fF_1| - \frac{1-\tr}{\tr} |\fD_{B1}| + |U'_1| + \delta k /3\\
            \just{assumed bound on $\fD_{B1}$} & \geq k - |\fF_1| - \frac{1-\tr}{\tr} \frac{\tr}{1-\tr} \tr k + |U'_1| + \delta k /3\\
            &= (1-\tr)k - |\fF_1| + |U'_1| + \delta k /3\\
            \just{bound on the skew of $T$} & \geq |\fD_{A1}| - |\fF_1| + |U'_1| + \delta k /3\\
            &\geq |\fG_1| + |U'_1| + \delta k /3\\
            &\geq |\fG_1| + |U_\pls^{(2)} \cap \bigcup \fB_\pls^{(2)}| + \eta_\pls^{(2)} n\;.\\
        \end{align*}
        
        We set $\tilde U_\pls^{(2)}:=\emptyset$ and immediately apply the second part of the proposition. We verify that for each $C \in \mathcal L$ we have
        
        \begin{align*}
            \avdeg(C)
            &\geq k + \delta k\\
            &=|\fD_A \cup \fD_{B} \cup W_A \cup W_B| + \delta k\\
            &\geq |\fG_1| + |\fG_2| + |\phi(W_A \cup W_B \cup \fF')| + |\fD_{B1}| + \eta_\pls^{(2)} n\\
            &\geq |\fG_1| + |\fG_2| + |U_\pls^{(2)} \cup \tilde U_\pls^{(2)}| + \eta_\pls^{(2)} n\;.
        \end{align*}
        
        Thus, we can extend $\phi$ to $\fG$. Moreover, after each operation it was true that~$\phi$ avoided at least $r'\eta_\pls^{(1)}|C|/8$ vertices of each cluster~$C$. Thus, we can extend~$\phi$ to~$\fD_B$ using the 'moreover' part of Proposition~\ref{prop:embed-LS}.
        \item
        We again extend the embedding of $T$ greedily to $\fF_1 \setminus \fF'_1$ as usual. 
    
    \end{enumerate}

\section{Conclusion}\label{sec:conclusion}

In this last section we show a straightforward application of our result and then consult the possibilities of further research in this area. 

\subsection*{Ramsey numbers for trees}
The Ramsey number $R(G_1, \ldots, G_m)$ is the least number such that any complete graph on $R(G_1, \ldots, G_m)$ vertices with its edges coloured with $m$ colours contains a monochromatic copy of $G_i$ in colour $i$ for some $1\leq i \leq m$. It is not difficult to see that, if true, both the Loebl--Koml\'os--S\'os conjecture and the Erd\H os--S\'os conjecture would imply that for any pair of trees $T_1, T_2$ on $k+1$ and $l+1$ vertices, respectively, it holds that $R(T_1, T_2) \leq k+l$. This was shown to be asymptotically true in~\cite{PS07+} and even finer asymptotic bound was obtained for $T_1=T_2$ in~\cite{Haxell2002}.

Our Conjecture~\ref{conj} generalises this consequence for trees of given skew. 

Suppose that we have trees $T_1, \ldots, T_m$ such that the size of the $i$-th tree is $k_i+1$ and the size of one of its colour class is at most $(k_i+1)/m$. Then, assuming the validity of Conjecture~\ref{conj}, we deduce $R(T_1, \ldots, T_m) \leq 2+\sum_{i=1}^{m} \left( k_i-1\right)$. Indeed, by the pigeonhole principle,  for every vertex $v$ there exists a colour $i$ such that $v$ is incident with at least $k_i$ edges of colour $i$. Moreover,  there exists a colour $c$ such that at least $1/m$ of the vertices are incident with at least $k_c$ edges of this colour. Thus, the subgraph formed by the edges of colour $c$ satisfies the conditions of Conjecture~\ref{conj}.
Using Theorem~\ref{thm:result}, we prove this consequence to be asymptotically true. 

\begin{corollary}\label{cor:ramsey}
For trees $T_1, \ldots, T_m$ with $|T_i|=k_i$ and such that  one colour class of $T_i$ has size at most $k_i/m$ for $1\leq i \leq m$ we have 
$$R(T_1, \ldots, T_m) \leq \sum_{i=1}^{m} k_i + o\left(\sum_{i=1}^{m} k_i\right).$$
\end{corollary}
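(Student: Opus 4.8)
The plan is to deduce Corollary~\ref{cor:ramsey} directly from Theorem~\ref{thm:result} by a two-step pigeonhole argument. The only genuine subtlety is that Theorem~\ref{thm:result} is stated only for $k\ge qn$, whereas one of the trees may be negligible in size compared with $\sum_j k_j$; we resolve this by arranging, already in the pigeonhole step, a surplus of order $n$ in the relevant degree bound, which lets us feed the theorem a tree-size parameter $k^\ast=\Theta(n)$ rather than the actual (possibly tiny) $k_c$.

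First I would dispose of $m=1$ ($R(T_1)=k_1$ trivially), so assume $m\ge 2$ and set $r:=1/m\le 1/2$. Fix $\epsilon>0$ and put $\eta:=\epsilon/3$; writing $K:=\sum_i k_i$, it suffices to show $R(T_1,\dots,T_m)\le(1+\epsilon)K+1$ once $K$ is large in terms of $\epsilon$ and $m$ (for bounded $K$ the Ramsey number is bounded by a classical Ramsey number, so the excess $R-K$ is bounded there, and as $\epsilon$ is arbitrary the two regimes combine to give $R=K+o(K)$). Let $n:=\lceil(1+\epsilon)K\rceil$ and consider an arbitrary colouring of $E(K_n)$ with colours $1,\dots,m$.

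\emph{Step 1 (refined pigeonhole).} For every vertex $v$ one has $\sum_{i=1}^m\deg_i(v)=n-1\ge(1+\epsilon)K-1$; hence, provided $K>6/\epsilon$, there is a colour $c(v)$ with
\[
\deg_{c(v)}(v)\ \ge\ (1+\eta)k_{c(v)}+\tfrac{\epsilon}{2m}K\ \ge\ (1+\eta)k_{c(v)}+q_0 n,\qquad q_0:=\tfrac{\epsilon}{4m(1+\epsilon)},
\]
where the last inequality uses $n\le 2(1+\epsilon)K$. Indeed, if all these inequalities failed then $n-1<(1+\eta)K+\tfrac{\epsilon}{2}K=(1+\tfrac56\epsilon)K$, contradicting $n-1\ge(1+\epsilon)K-1$ as soon as $K>6/\epsilon$. \emph{Step 2 (choosing a colour).} By pigeonhole some colour $c$ is chosen by at least $n/m=rn$ vertices. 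Let $G$ be the spanning subgraph of $K_n$ formed by the colour-$c$ edges: it has order $n$ and at least $rn$ vertices of degree at least $(1+\eta)k_c+q_0 n$. Put $k^\ast:=k_c+\lfloor q_0 n/(1+\eta)\rfloor$. Then $(1+\eta)k^\ast\le(1+\eta)k_c+q_0 n$, so those $\ge rn$ vertices have $G$-degree $\ge(1+\eta)k^\ast$; also $k^\ast\ge\lfloor q_0 n/(1+\eta)\rfloor\ge qn$ for $n$ large, where $q:=q_0/(2(1+\eta))$ is a fixed positive constant depending only on $\epsilon,m$; and $k^\ast\ge k_c$, so $T_c$ has order $k_c\le k^\ast$ and, by hypothesis, a colour class of size $\le k_c/m\le k^\ast/m=rk^\ast$. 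Applying Theorem~\ref{thm:result} with these $r,q,\eta$ to the host graph $G$ (of order $n\ge n_0$ for $K$ large) yields a copy of $T_c$ in $G$, i.e.\ a colour-$c$ copy of $T_c$ in $K_n$. Therefore $R(T_1,\dots,T_m)\le n\le(1+\epsilon)K+1$.

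The one step that needs care — and the whole reason for the refined pigeonhole — is the $k\ge qn$ hypothesis of Theorem~\ref{thm:result}: a naive pigeonhole only produces a colour class $c$ with $\ge rn$ vertices of colour-$c$-degree $\ge(1+\eta)k_c$, and if $T_c$ is tiny this carries no usable lower bound on $k_c/n$, so the theorem cannot be invoked. The linear surplus $q_0n$ repairs exactly this, letting us replace $k_c$ by $k^\ast=\Theta(n)$ while preserving both the degree condition and the skew condition $|V_1|\le rk^\ast$. Everything else is routine bookkeeping of the constants $\eta,q_0,q,n_0$, and I anticipate no further obstacle.
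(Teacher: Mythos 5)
Your proof is correct and follows essentially the same pigeonhole-plus-Theorem~\ref{thm:result} route that the paper sketches (and attributes to~\cite{PS07+}). The one point you elaborate beyond the paper's sketch --- boosting the tree-size parameter to $k^\ast=\Theta(n)$ so as to meet the hypothesis $k\ge qn$ of Theorem~\ref{thm:result} --- is a genuine detail the paper leaves implicit, and you handle it correctly.
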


This generalises the asymptotic bound from~\cite{PS07+} and can be shown in a very similar manner.  

Note however that, if true, the Erd\H os--S\'os conjecture implies the same bound but without the additional restriction on the skew of the trees. 

\subsection*{Possible direction of research}
We believe that, similarly as in~\cite{Cooley2009,Hladkyn, Zhao2011}, one could use Simonovits' stability method to prove that Conjecture~\ref{conj} is true for dense graphs. Furthermore, by using techniques exposed in~\cite{HladkyLKS1,HladkyLKS2,HladkyLKS3,HladkyLKS4}, one can probably prove that Conjecture~\ref{conj} is asymptotically true even in the setting of sparse graphs.

\medskip

Considering the structure of the graph witnessing the tightness of Conjecture~\ref{conj} given in Section~\ref{sec:intro}, it might seem feasible to strengthen the conjecture by replacing the condition on the size of the smaller colour class by the same condition on the size of the complement of a maximal independent set. However, this is not possible; a complete bipartite graph $K_{(k-1)/2,k}$ does not contain a bistar~$B_{(k-1)/2,(k-1)/2}$ (that is, two stars with $(k-1)/2$ leaves with their centres joined by an edge) for $k\geq 7$ odd, even though almost $1/3$ of vertices of $K_{(k-1)/2,k}$ have degree at least~$k$ and the size of the complement of a maximal independent set in $B_{(k-1)/2,(k-1)/2}$ is $2$, i.e., its relative size with respect to the whole bistar is very small, in particular at most $1/4$.

\bibliographystyle{alpha}
\bibliography{bibl}

\newcommand{\etalchar}[1]{$^{#1}$}
\begin{thebibliography}{HKP{\etalchar{+}}17d}

\bibitem[AKSS]{Ajtai}
M.~Ajtai, J.~Koml{\'o}s, M.~Simonovits, and E.~Szemer{\'e}di.
\newblock {{E}rd\H{o}s-{S}{\'o}s conjecture}.
\newblock In preparation.

\bibitem[Coo09]{Cooley2009}
O.~Cooley.
\newblock {Proof of the {L}oebl-{K}oml{\'o}s-{S}{\'o}s conjecture for large,
  dense graphs}.
\newblock {\em Discrete Math.}, 309(21):6190--6228, 2009.

\bibitem[ES46]{Erdos1946}
P.~Erd\H{o}s and A.~H. Stone.
\newblock {On the structure of linear graphs}.
\newblock {\em Bulletin of the American Mathematical Society}, 52:1087--1091,
  1946.

\bibitem[HKP{\etalchar{+}}17a]{HladkyLKS1}
J.~Hladk{\'y}, J.~Koml{\'o}s, D.~Piguet, M.~Simonovits, M.~Stein, and
  E.~Szemer{\'e}di.
\newblock {The approximate Loebl--Koml{\'o}s--S{\'o}s Conjecture I: The sparse
  decomposition}.
\newblock {\em SIAM J. Discrete Math.}, 31(2):945--982, 2017.

\bibitem[HKP{\etalchar{+}}17b]{HladkyLKS2}
J.~Hladk{\'y}, J.~Koml{\'o}s, D.~Piguet, M.~Simonovits, M.~Stein, and
  E.~Szemer{\'e}di.
\newblock {The approximate Loebl--Koml{\'o}s--S{\'o}s Conjecture II: The rough
  structure of LKS graphs}.
\newblock {\em SIAM J. Discrete Math.}, 31(2):983--1016, 2017.

\bibitem[HKP{\etalchar{+}}17c]{HladkyLKS3}
J.~Hladk{\'y}, J.~Koml{\'o}s, D.~Piguet, M.~Simonovits, M.~Stein, and
  E.~Szemer{\'e}di.
\newblock {The approximate Loebl--Koml{\'o}s--S{\'o}s Conjecture III: The finer
  structure of LKS graphs}.
\newblock {\em SIAM J. Discrete Math.}, 31(2):1017--1071, 2017.

\bibitem[HKP{\etalchar{+}}17d]{HladkyLKS4}
J.~Hladk{\'y}, J.~Koml{\'o}s, D.~Piguet, M.~Simonovits, M.~Stein, and
  E.~Szemer{\'e}di.
\newblock {The approximate Loebl--Koml{\'o}s--S{\'o}s Conjecture IV: Embedding
  techniques and the proof of the main result}.
\newblock {\em SIAM J. Discrete Math.}, 31(2):1072--1148, 2017.

\bibitem[HLT02]{Haxell2002}
P.~E. Haxell, T.~Luczak, and P.~W. Tingley.
\newblock {Ramsey numbers for trees of small maximum degree}.
\newblock {\em Combinatorica}, 22(2):287--320, 2002.
\newblock Special issue: Paul Erd\H os and his mathematics.

\bibitem[HP16]{Hladkyn}
J.~Hladk{\'y} and D.~Piguet.
\newblock Loebl-{K}oml\'os-{S}\'os {C}onjecture: dense case.
\newblock {\em J. Combin. Theory Ser. B}, 116:123--190, 2016.

\bibitem[HPS{\etalchar{+}}15]{Hladky2015}
J.~Hladk{\'y}, D.~Piguet, M.~Simonovits, M.~Stein, and E.~Szemer{\'e}di.
\newblock {The approximate Loebl--Koml{\'o}s--S{\'o}s conjecture and embedding
  trees in sparse graphs}.
\newblock {\em Electron. Res. Ann. Math. Sci.}, 22:1--11, 2015.

\bibitem[KPR17]{KLIMOSOVA2017}
T.~Klimošová, D.~Piguet, and V.~Rozhoň.
\newblock A skew version of the {L}oebl–{K}omlós–{S}ós conjecture.
\newblock {\em Electronic Notes in Discrete Mathematics}, 61:743 -- 749, 2017.
\newblock The European Conference on Combinatorics, Graph Theory and
  Applications (EUROCOMB'17).

\bibitem[PS12]{PS07+}
D.~Piguet and M.~J. Stein.
\newblock {An approximate version of the {L}oebl-{K}oml{\'o}s-{S}{\'o}s
  conjecture}.
\newblock {\em J. Combin. Theory Ser. B}, 102(1):102--125, 2012.

\bibitem[Roz18]{Rozhon2018}
V.~Rozhoň.
\newblock {Embedding trees in graphs}, 2018.
\newblock Bachelor's Thesis, in preparation.

\bibitem[Zha11]{Zhao2011}
Y.~Zhao.
\newblock {Proof of the {$(n/2-n/2-n/2)$} conjecture for large {$n$}}.
\newblock {\em Electron. J. Combin.}, 18(1):Paper 27, 61, 2011.

\end{thebibliography}

\end{document}